\documentclass{amsart}
\newtheorem{theorem}{Theorem}[section]
\newtheorem{lemma}[theorem]{Lemma}
\newtheorem{corollary}[theorem]{Corollary}

\theoremstyle{definition}
\newtheorem{definition}[theorem]{Definition}

\theoremstyle{remark}
\newtheorem{remark}[theorem]{Remark}

\numberwithin{equation}{section}
\usepackage{enumerate}

\usepackage{amsmath,graphicx,adjustbox}
\usepackage{amssymb}
\usepackage{euscript}
\usepackage{pstricks}
\usepackage{hyperref}
\usepackage{xcolor,cleveref}
\usepackage[linesnumbered,ruled,vlined]{algorithm2e}
\usepackage{soul}
\usepackage{mathabx}
\usepackage{enumitem}
\usepackage{pgfplots}
\usepackage{algpseudocode}
\usepackage{color}

\hypersetup
{
	colorlinks,
	citecolor=black,
	filecolor=black,
	linkcolor=black,
	urlcolor=magenta
}
\hypersetup{linktocpage} %
\numberwithin{equation}{section}

\begin{document}

\title[Gevrey regularity and QMC quadrature for parametric PDEs]{Local Gevrey regularity and Quasi-Monte Carlo quadrature for  PDEs parameterized  on non-compact  domains}
\author[D. D\~ung]{Dinh D\~ung}
\address{Information Technology Institute, Vietnam National University, Hanoi}
\curraddr{144 Xuan Thuy, Cau Giay, Hanoi, Vietnam}
\email{dinhzung@gmail.com}
\thanks{The work of Dinh D\~ung is funded by the Vietnam National Foundation for Science and Technology Development (NAFOSTED) under the Vietnamese-Swiss Joint Research Project, Grant No.
	IZVSZ2$_{ - }$229568. A part of this work was done when Dinh D\~ung was working at the Vietnam Institute for Advanced Study in Mathematics (VIASM). He would like to thank the VIASM for providing a fruitful research environment and working condition.}

\author[T. L\^e]{T\`ung L\^e}
\address{Institut für Mathematik, Carl von Ossietzky Universität Oldenburg}
\email{tung.le@uni-oldenburg.de}
 \thanks{
	The second author thanks the Hausdorff Research Institute for Mathematics (Bonn) for the hospitality and support during the Trimester Program \textit{Computational multifidelity, multilevel, and multiscale methods}, funded by the Deutsche Forschungsgemeinschaft (DFG, German Research Foundation) under Germany‘s Excellence Strategy – EXC-2047/2 – 390685813.}

 	\author[A. Chernov]{Alexey Chernov}
 	\address{Institut für Mathematik, Carl von Ossietzky Universität Oldenburg}
 	\email{alexey.chernov@uni-oldenburg.de}
 	\thanks{ The simulations were conducted on the HPC cluster ROSA, located at the University of Oldenburg (Germany). ROSA was funded by the German Research Foundation (DFG) through its Major Research Instrumentation Programme (INST 184/225-1 FUGG) and the Ministry of Science and Culture (MWK) of Lower Saxony.}

\subjclass[2020]{Primary 65C05, 65C30, 65D30, 65D32, 65N30}

\keywords{Uncertainty quantification;  PDEs parameterized on  non-compact  domain;  Gevrey regularity; Quasi--Monte Carlo quadratures; Error analysis and convergence rate.}

\date{\today}

\dedicatory{}

\begin{abstract}
	We establish local Gevrey regularity for the weak solution to parametric divergence-form diffusion elliptic PDEs, assuming the diffusion coefficient itself possesses local Gevrey{ parametric regularity over a non-compact domain}. Here ``local Gevrey regularity'' means that the regularity is determined in a neighborhood of each parametric point and depends on that point. Explicit bounds on parametric derivatives of the weak solution are proved in the $H^1$-norm.
	Building on this local Gevrey framework, we develop a novel theoretical treatment of the dimension truncation error for infinite-dimensional integration. We prove convergence rates of quasi-Monte Carlo quadrature for the finite-dimensional Gaussian-weighted integration of integrands having local Gevrey regularity. Together with the finite element discretization error, the errors arising from dimension truncation and quasi-Monte Carlo quadrature yield a complete error analysis {and convergence rates} for the fully discrete approximation of a bounded linear functional of the weak solution.
	Numerical experiments confirming the theoretical convergence rates are presented.
\end{abstract}

\maketitle

\section{Introduction}

{%

In recent years, computational uncertainty quantification (UQ) has witnessed substantial advances in the efficient approximation of infinite-dimensional parametric partial differential equations (PDEs) driven by random inputs. 
For a detailed overview together with an extensive bibliography, we refer the reader to {\cite{ABDM2023,CoDe15a,DNSZ2023}.}
The high computational cost inherent to such problems has motivated the design of more efficient numerical techniques, including approaches based on sparse grids 
{\cite{BCDC17,BCDM17,BCM17,CCS15,Dung19,Dung21,DHNS2026,DNSZ2023,ZDS19,ZS20}}  as well as methods employing least-squares approximation \cite{BD2024,CCMNT2015,CM2018,DD2026a,DD2026,DHV2026} or quasi-Monte Carlo (QMC) strategies \cite{DHNS2026,Gilbert2019,Graham15,KuNu16,KuoSchwabSloan2012}. Approaches based on sparse grids and least-squares approximation typically rely on a generalized polynomial chaos  expansion of the parametric solution map and, in particular, on sparsity of the resulting coefficients. In contrast, QMC methods require substantially less regularity: namely, suitable weighted bounds on parametric mixed partial derivatives of a prescribed order. This weaker set of assumptions permits extending the QMC approach to a broader class of parametric PDEs, notably those whose solution maps exhibit (so-called) Gevrey regularity, studied in \cite{CL2024,CL2024b,DHNS2026,GuthKaarnioja2025,HSS2024}.

  Consider an important model parametric PDE, the divergence-form diffusion elliptic equation 
\begin{equation} \label{ellip}
	\begin{cases}
		-\operatorname{div} \left( a({\boldsymbol{y}})({\boldsymbol{x}}) \nabla u({\boldsymbol{y}})({\boldsymbol{x}}) \right) = {f({\boldsymbol{x}})} & \text{in } D, \\
		\hspace{2.8cm} u({\boldsymbol{y}})({\boldsymbol{x}}) = 0 & \text{on } \partial D,
	\end{cases}
\end{equation}
where $D \subset \mathbb{R}^d$ is a bounded Lipschitz domain, 
${f }\in H^{-1}(D)$ and 
 $a: \, D \times \Omega \to {\mathbb R}$ is a parametric diffusion coefficient. The parametric domain $\Omega$ is assumed to be a non-empty subset of ${\mathbb R}^{\mathbb N}$. Throughout this paper, we assume these conditions and the below condition \eqref{PDE-ellipticity} on uniform ellipticity hold without further comment.

Assume that for every ${\boldsymbol{y}} \in \Omega$, the function $Z({\boldsymbol{y}})$ is such that 
$a({\boldsymbol{y}})$ is strictly positive and bounded in $D$:
\begin{align}\label{PDE-ellipticity}
	0 < a_{\min}({\boldsymbol{y}}) \leq a({\boldsymbol{y}})(\boldsymbol{x}) \leq a_{\max}({\boldsymbol{y}}) < \infty, \ \ \boldsymbol{x} \in D.
\end{align}
Then by the Lax--Milgram lemma,  there exists a unique weak solution $u({\boldsymbol{y}}) \in V := H^1_0(D)$ satisfying the variational formulation:
\begin{equation} \label{weak-form}
	\int_{D} a({\boldsymbol{y}})({\boldsymbol{x}}) \nabla u({\boldsymbol{y}})({\boldsymbol{x}}) \cdot \nabla v({\boldsymbol{x}}) \, {\rm d} {\boldsymbol{x}} = \langle f, v \rangle, \quad \forall v \in V.
\end{equation}
Here $\langle \cdot, \cdot\rangle $ is the duality pairing of $V$ and $V' = H^{-1}(D)$.

In the present paper, we focus our attention on the case when the parameter domain $\Omega={\mathbb R}^{\mathbb N}$ is non-compact. It is convenient in consideration to present 
$a({\boldsymbol{y}})({\boldsymbol{x}})$ in the form
\begin{equation} \label{def:diffus-coeff}
	a({\boldsymbol{y}})({\boldsymbol{x}}) := \exp\big( Z({\boldsymbol{y}})({\boldsymbol{x}})\big), \quad {\boldsymbol{x}} \in D,
\end{equation}
where $Z: \, D \times \Omega \to {\mathbb R}$. 

{One of the most important instances of \eqref{def:diffus-coeff} is the log-normal representation of the parametric diffusion coefficient 
$a({\boldsymbol{y}})$, which is based on the Karhunen--Lo\`eve decomposition of $Z({\boldsymbol{y}})$. In this setting, the function  $Z({\boldsymbol{y}})$ takes the form
\begin{equation} \label{KL-decomp}
	Z({\boldsymbol{y}}) = \sum_{j \in {\mathbb N}} y_j \psi_j,
\end{equation}
where $(\psi_j)_{j \in {\mathbb N}}$ is a sequence of  functions in $L_\infty(D)$, and $(y_j)_{j \in {\mathbb N}}$ is
a sequence of i.i.d. normal random variables. For further details, see, e.g., \cite{BCDM17}. }

Throughout this paper, we explicitly distinguish between spatial and parametric derivatives. We employ the notation $\mathsf{D}^{\boldsymbol{\alpha}}$ to denote the partial derivative with respect to the spatial variables ${\boldsymbol{x}} \in D$, {where ${\boldsymbol{\alpha}} \in {\mathbb N}^d_0$ is a spatial multi-index. Conversely, the symbol $\partial^{\boldsymbol{\nu}}$  is strictly reserved for  partial  derivatives with respect to the parameter variables ${\boldsymbol{y}}$, where  ${\boldsymbol{\nu}} \in {\mathcal F}$ is a parametric multi-index.}

A fundamental challenge in the numerical analysis of  the parametric PDE \eqref{ellip} is that the diffusion coefficient is not uniformly elliptic with respect to the parameters ${\boldsymbol{y}}$. In the particular setting where $Z({\boldsymbol{y}})$ is restricted to an affine-parametric combination (log-normal case), this issue has been extensively studied and successfully resolved in the literature.  The resolution relies on the real analyticity — or complex-analytic extension in the parametric variables — of the weak solution, which is established through parametric derivative bounds on the diffusion coefficient
\begin{equation*} 
	\left\|{\partial^{\boldsymbol{\nu}} a({\boldsymbol{y}})}\right\|_{L^\infty(D)}
	\le C\boldsymbol{{\boldsymbol{\beta}}}^{\boldsymbol{\nu}}\,|\boldsymbol{\nu}|! \exp\big(\left\|{Z({\boldsymbol{y}})}\right\|_{L^\infty(D)}\big), \quad \forall {\boldsymbol{y}} \in {\mathbb R}^{\mathbb N}.
\end{equation*} 
See, e.g., \cite{BCDM17,DHNS2026,DHV2026,DNSZ2023,Graham15,HOANG2014}. It is worth  noting that this bound  depends point-wise on parameter ${\boldsymbol{y}}$.

Moving beyond this affine structure, the works \cite{CL2024,CL2024b,HSS2024} have introduced a novel framework of Gevrey parametric regularity for parametrizations of general form and the compact parameter domain $\Omega=[-1,1]^{\mathbb N}$. Guth and Kaarnioja have recently extended this setting in \cite{GuthKaarnioja2025} to the case $\Omega={\mathbb R}^{\mathbb N}$.

The analysis in \cite{GuthKaarnioja2025}  relies  on the assumption of global Gevrey regularity
\begin{equation} \label{norm-G-regularity}
	\left\|{\frac{\partial^{\boldsymbol{\nu}} a({\boldsymbol{y}})}{a({\boldsymbol{y}})}}\right\|_{L^\infty(D)}
	\le C\boldsymbol{{\boldsymbol{\beta}}}^{\boldsymbol{\nu}}\,(|\boldsymbol{\nu}|!)^\delta, \quad \forall {\boldsymbol{y}} \in {\mathbb R}^{\mathbb N},
\end{equation}
(\cite[Eq. (1.4)]{GuthKaarnioja2025}), which holds only in some particular cases such as the affine-parametric case of $Z({\boldsymbol{y}})$. Here, $\delta\ge 1$ and $C$ and ${\boldsymbol{\beta}}:=(\beta_j)_{j \in {\mathbb N}}$ are a constant and a sequence, respectively,  independent of ${\boldsymbol{y}}$.
While such conditions allow for generalizations beyond the log-normal case, they often necessitate structural constraints (i.e. the constant ${\boldsymbol{\beta}}$ being independent of~${\boldsymbol{y}}$) that may not easily accommodate the complex, non-linear dependencies of the diffusion coefficient on the parameters ${\boldsymbol{y}}$ that frequently arise in practice. For instance, a prototypical example of diffusion coefficient $a$ that fails to satisfy the global condition \eqref{norm-G-regularity} is given by
	\begin{equation*}
		a ({\boldsymbol{y}})({\boldsymbol{x}}) = 
		\exp\left(\sum_{j \in \mathbb{N}} \psi_j({\boldsymbol{x}})
			\cosh({\mathsf{b}}_j y_j)\right),
	\end{equation*}
	This example highlights the limitations of \eqref{norm-G-regularity} on unbounded domains: the rapid growth of the $\cosh$ function as $|y_j| \to \infty$ strictly precludes the existence of a parameter-independent sequence ${\boldsymbol{\beta}}$.

The aim of this work is  to provide a significant generalization of the results with the QMC applications of \cite{GuthKaarnioja2025} by relaxing these structural constraints. Our approach is built upon the assumption that the diffusion coefficient $a({\boldsymbol{y}})$ satisfies the \emph{local} Gevrey $\delta$-regularity: $\forall {\boldsymbol{y}}\in\mathbb{R}^{\mathbb{N}}$ there exists a neighborhood $U_{{\boldsymbol{y}}}$ of ${\boldsymbol{y}}$ such that 
\begin{equation} \label{dev-a-bounds}
	\left\|{\frac{\partial^{\boldsymbol{\nu}} a(\boldsymbol{\xi})}{a(\boldsymbol{\xi})}}\right\|_{L^\infty(D)}
	\le \kappa({\boldsymbol{y}}) \boldsymbol{\beta}({\boldsymbol{y}})^{\boldsymbol{\nu}} (|\boldsymbol{\nu}|!)^{\delta}, \quad \forall \boldsymbol{\xi} \in U_{{\boldsymbol{y}}}, \quad \forall \boldsymbol{\nu}\in \mathcal{F}\setminus\{\boldsymbol{0}\},
\end{equation}	
or  the weaker \emph{pointwise} Gevrey $\delta$-regularity
\begin{equation} \label{dev-a-bounds-pointwise}
	\left\|{\frac{\partial^{\boldsymbol{\nu}} a({\boldsymbol{y}})}{a({\boldsymbol{y}})}}\right\|_{L^\infty(D)}
	\le \kappa({\boldsymbol{y}}) \boldsymbol{\beta}({\boldsymbol{y}})^{\boldsymbol{\nu}} (|\boldsymbol{\nu}|!)^{\delta}, \quad \forall {\boldsymbol{y}} \in {\mathbb R}^{\mathbb N}, \quad \forall \boldsymbol{\nu}\in \mathcal{F}\setminus\{\boldsymbol{0}\}.
\end{equation}	
{Here, $\delta\ge 1$, and   $\kappa$ and ${\boldsymbol{\beta}}:=(\beta_j)_{j \in {\mathbb N}}$ are a positive function and a sequence of positive functions on ${\mathbb R}^{\mathbb N}$.}

{As established in our key result,}  Theorems \ref{thm:G-regularity-u(by)}, we prove that the weak solution $u({\boldsymbol{y}})$ inherit the Gevrey regularity of  diffusion coefficient $a({\boldsymbol{y}})$. 
This property  is derived by using a novel \emph{alternative-factorial technique}, initiated in \cite{CL2024,CL2024b, CL2025}. Unlike the conditions in \cite{GuthKaarnioja2025}, our Gevrey regularity results allow the quantity {$\kappa$ and the sequence  ${\boldsymbol{\beta}}$} in the derivative bounds in  \eqref{dev-a-bounds} to depend on ${\boldsymbol{y}}$. This on one hand, allows to extend to a significantly wider class of Gevrey regularity, and on the other hand, provides a more realistic and flexible foundation for the subsequent analysis of dimension truncation and QMC integration errors.

The main contributions of this work are summarized as follows:
\begin{itemize} 
	\item We establish the local and pointwise Gevrey $\delta$-regularities for the weak solution $u({\boldsymbol{y}})$ to the parametric elliptic PDE \eqref{ellip} based on the local and pointwise Gevrey $\delta$-regularity of diffusion coefficient in terms of  \eqref{dev-a-bounds}, respectively. The bounds of parametric derivatives of $u({\boldsymbol{y}})$ are given in the norm of the space $V$.
	
	\item 
	Building on this local or pointwise Gevrey $\delta$-regularity framework, we develop a novel theoretical treatment of the dimension truncation error for infinite-dimensional weighted integration. Notably, in the setting where the derivative bounds are restricted to parameter-independent constants, our estimates seamlessly recover the recent results established in \cite{GuthKaarnioja2024}.
	
	\item We prove convergence rates of quasi-Monte Carlo quadrature for  the finite-dimensional Gaussian integration for integrands having a local Gevrey $\delta$-regularity and show that the integration error remains robust under the local or pointwise Gevrey $\delta$-regularity properties. 
	  
	  \item
	  Together with the finite element discretization error, the errors arising from dimension truncation and quasi-Monte Carlo quadrature yield a total error analysis {and convergence rates} for the fully discrete approximation of a bounded linear functional of the weak solution. 
\end{itemize}
 
The remainder of this paper is structured as follows. Section~\ref{sec:Definitions and preliminaries} recalls essential facts about the falling factorial technique and introduces the notions of local and pointwise Gevrey $\delta$-regularities. Building on the local Gevrey $\delta$-regularity of the diffusion coefficient, Section~\ref{sec:Gevrey regularity of the weak solution} establishes the local Gevrey $\delta$-regularity in the energy space for the weak solution. Section~\ref{sec: truncation} provides an analysis of the dimension truncation error, while Section~\ref{sec:Error Analysis for Quasi-Monte Carlo methods} proves the convergence rate of quasi-Monte Carlo quadrature for finite-dimensional integrals of functions exhibiting Gevrey $\delta$-regularity. After recalling relevant results on finite element approximation, Section~\ref{sec: total error} derives a bound on the total error for the fully discrete approximation of a bounded linear functional of the weak solution, combining the finite element discretization error with the errors arising from dimension truncation and quasi-Monte Carlo quadrature. Finally, Section~\ref{sec:Numerical experiment} presents numerical experiments that validate our theoretical findings.  %
}

\subsection*{{Notations}}
The set of finitely supported sequences of non-negative integers $\mathcal F$ is defined as
\begin{equation}\label{cF-def}
	\mathcal F := 
	\left\{
	\boldsymbol{\nu} ={(\nu_j)_{j \in {\mathbb N}}} : \nu_j\in {\mathbb{N}}_0 \text{ with } \nu_j \neq 0 \text{ for  finitely many } j
	\right\}.
\end{equation}
For multi-indices ${\boldsymbol{\alpha}}, \boldsymbol{\beta} \in \mathcal F$, the algebraic operation ${\boldsymbol{\alpha}} + \boldsymbol{\beta}$ and the partial orderings ${\boldsymbol{\alpha}} \leq \boldsymbol{\beta}$ are understood componentwise. Specially, ${\boldsymbol{\alpha}} < \boldsymbol{\beta}$ means ${\boldsymbol{\alpha}} \leq \boldsymbol{\beta}$ and ${\boldsymbol{\alpha}} \neq \boldsymbol{\beta}$. For a given sequence of positive real numbers ${\boldsymbol{R}}=(R_j)_{j{\in {\mathbb N}}}$ and a multi-index $\boldsymbol{\nu} \in \mathcal F$, it is convenient to define
\begin{align*}
	{\boldsymbol{R}}^{\boldsymbol{\nu}} := \prod_{j{\in {\mathbb N}}} R_j^{\nu_j},
	\qquad \qquad
	{\left|\boldsymbol{\nu}\right|} := \sum_{j{\in {\mathbb N}}} \nu_j, 
	\qquad \qquad
	\boldsymbol{\nu}! := \prod_{j{\in {\mathbb N}}} \nu_j!. 
\end{align*}

For a $\boldsymbol{\nu}\in \mathcal F$ with  $\operatorname{supp} (\boldsymbol{\nu}) \subset\left\{1,2,\dots, n\right\}$ and a function $v$ on ${\mathbb R}^{\mathbb N}$, the partial derivative $\partial^{\boldsymbol{\nu}} v$ with respect to the variables $\boldsymbol{y}$ are defined as
\begin{align*}
	\partial^{\boldsymbol{\nu}} v
	=
	\frac{\partial^{{\left|\boldsymbol{\nu}\right|}}v}
	{\partial y_1^{\nu_1} \partial y_2^{\nu_2} \dots \partial y_n^{\nu_n}}
\end{align*}
with the convention that $\partial y_j^{\nu_j}$ is omitted if $\nu_j =0$. 
For two multi-indices $\boldsymbol{\nu}, {\boldsymbol{\eta}} \in \mathcal F$, the binomial coefficient is given by
\begin{equation*}
	\left(\boldsymbol{\nu} \atop {\boldsymbol{\eta}}\right)  = 
	\prod_{j{\in {\mathbb N}}}  \left(\nu_j \atop \eta_j\right)
	=\frac{\boldsymbol{\nu}!}{{\boldsymbol{\eta}}!\,(\boldsymbol{\nu}-{\boldsymbol{\eta}})!
	}.
\end{equation*}

\section{{Gevrey regularities}} 
\label{sec:Definitions and preliminaries}

{This section introduces the concepts of local and pointwise Gevrey $\delta$-regularities, along with the necessary preliminaries on the falling factorial — an efficient tool for establishing these regularities for weak solutions to the parametric elliptic PDEs given by \eqref{ellip}.}

\subsection{{Gevrey regularity and real analyticity classes}}
	\label{Real analytic and Gevrey regularities}
	
	The following definition of  Gevrey  $\delta$-regularities for functions with countably many variables will serve as the foundation for our subsequent analysis. We note that this definition is equivalent to the definition of Gevrey regularity for a compact set as in \cite[Def. 1.4.1]{Rodino1993}.
	
	\begin{definition}[Local Gevrey {regularity}]\label{def:Gevrey-Class-Local}
Let $\Omega \subseteq \mathbb{R}^{\mathbb{N}}$ {be a domain} and let $X$ be a Banach space. A function 
$v: \Omega \to X$ belongs to the \emph{Gevrey class} {$G^{\delta}_{\operatorname{loc}}(\Omega,X)$} or equivalently, $v$ has the local Gevrey $\delta$-regularity if  $v$ is infinite differentiable on $\Omega$ and if for  every $\boldsymbol{y} \in \Omega$, there exist an open neighborhood $U_{{\boldsymbol{y}}} \subseteq \Omega$ of $\boldsymbol{y}$, a positive function $\kappa(\boldsymbol{y}) \ge 1$, and a sequence ${\boldsymbol{\beta}}(\boldsymbol{y}) = \left(\beta_j(\boldsymbol{y})\right)_{j \in \mathbb{N}}$ of positive functions in variable ${\boldsymbol{y}}$, such that 
	\begin{equation} \label{eq:G-regularity-local}
		\left\| \partial^{\boldsymbol{\nu}} v(\boldsymbol{\xi}) \right\|_X \leq \kappa(\boldsymbol{y})\, {\boldsymbol{\beta}}(\boldsymbol{y})^{{\boldsymbol{\nu}}} \left( |{\boldsymbol{\nu}}|! \right)^\delta, \quad \forall {\boldsymbol{\nu}} \in \mathcal{F}, \; \forall \boldsymbol{\xi} \in U_{{\boldsymbol{y}}}.
	\end{equation}
\end{definition}

\begin{remark}\label{rem:Gevrey-differences}
		When $\Omega$ is a compact domain, a standard finite subcover argument
		ensures that for any $v \in {G^{\delta}_{\operatorname{loc}}(\Omega,X)}$, the bounding functions $\kappa(\boldsymbol{y})$ and ${\boldsymbol{\beta}}(\boldsymbol{y})$ can be replaced by universal constants independent of $\boldsymbol{y}$. Thus, the bound \eqref{eq:G-regularity-local} holds uniformly over the entire domain $\Omega$. 
\end{remark}
\begin{remark}\label{rem:analyticity}
	In the special case where $\delta = 1$, the Gevrey class {$G^{\delta}_{\operatorname{loc}}(\Omega,X)$} coincides with the space of $X$-valued real analytic functions on $\Omega$. Consequently, any $v \in {G^{\delta}_{\operatorname{loc}}(\Omega,X)}$ admits a locally and absolutely convergent multivariate Taylor series. Specifically, for every $\boldsymbol{y} \in \Omega$, there exists a neighborhood $U_{{\boldsymbol{y}}}$ of $\boldsymbol{y}$ such that for all $\boldsymbol{\xi} \in U_{{\boldsymbol{y}}}$, $v$ can be represented as:
	\begin{equation} \label{eq:Taylor-analytic}
		v(\boldsymbol{\xi}) = \sum_{{\boldsymbol{\nu}} \in \mathcal{F}} \frac{\partial^{{\boldsymbol{\nu}}} v(\boldsymbol{y})}{{\boldsymbol{\nu}}!} (\boldsymbol{\xi}-\boldsymbol{y})^{{\boldsymbol{\nu}}},
	\end{equation}
	where the convergence holds in the norm of $X$.
\end{remark}

While Definition \ref{def:Gevrey-Class-Local} imposes a uniform bound over a local neighborhood $U_{{\boldsymbol{y}}}$ with fixed control parameters for {this} neighborhood, it is often sufficient and more natural in certain parametric analyses to enforce this growth condition strictly at the evaluation point. %
{This is understood in the sense of the following definition.}

\begin{definition}[Pointwise Gevrey {regularity}]\label{def:Gevrey-Class-Pointwise}
Let $\Omega \subseteq \mathbb{R}^{\mathbb{N}}$ {be a domain}  and let $X$ be a Banach space. A function 
$v: \Omega \to X$ belongs to the \emph{Gevrey class} {$G^{\delta}_{\operatorname{pw}}(\Omega,X)$} or equivalently, $v$ has the pointwise Gevrey $\delta$-regularity if and only if $v$ is infinite differentiable on $\Omega$ and if for every $\boldsymbol{y} \in \Omega$, there exist
a positive function $\kappa(\boldsymbol{y}) \ge 1$, and a sequence ${\boldsymbol{\beta}}(\boldsymbol{y}) = \left(\beta_j(\boldsymbol{y})\right)_{j \in \mathbb{N}}$ of positive functions in variable ${\boldsymbol{y}}$, such that
	\begin{equation} \label{eq:G-regularity-pointwise}
		\left\| \partial^{\boldsymbol{\nu}} {v}(\boldsymbol{y}) \right\|_X \leq \kappa(\boldsymbol{y})\, {\boldsymbol{\beta}}(\boldsymbol{y})^{{\boldsymbol{\nu}}} \left( |{\boldsymbol{\nu}}|! \right)^\delta, \quad \forall {\boldsymbol{\nu}} \in \mathcal{F}.
	\end{equation}
\end{definition}

	{ The parameter $\delta^{-1}$ in Definitions~ \ref{def:Gevrey-Class-Local} and \ref{def:Gevrey-Class-Pointwise}
serves as a measure of Gevrey regularity, placing the corresponding function classes on the spectrum between real analytic and infinitely differentiable regularities.}

It is important to emphasize the core distinction between the class of local $\delta$-regularity {$G^{\delta}_{\operatorname{loc}}(\Omega,X)$} and the class of pointwise $\delta$-regularity {$G^{\delta}_{\operatorname{pw}}(\Omega,X)$}. The local class requires the bound \eqref{eq:G-regularity-local} to hold uniformly over the entire open neighborhood $U_{{\boldsymbol{y}}}$ using fixed local bounding functions $\kappa(\boldsymbol{y})$ and ${\boldsymbol{\beta}}(\boldsymbol{y})$ anchored at $\boldsymbol{y}$. In contrast, the pointwise class only imposes the growth condition strictly at the evaluation point $\boldsymbol{y}$ itself. 

The local Gevrey $\delta$-regularity introduced in Definition \ref{def:Gevrey-Class-Local} provides {a broader} framework than the settings recently analyzed in \cite{CL2024,CL2024b,CL2025}, naturally yielding uniform bounds when $\Omega$ is compact. However, our subsequent analysis of dimension truncation and QMC error bounds does not require such local uniformity. Instead, the weaker pointwise Gevrey $\delta$-regularity introduced in Definition \ref{def:Gevrey-Class-Pointwise} is entirely sufficient for our purposes.

To rigorously formalize this structural hierarchy, we state the following lemma {to  clarify} the inclusion relationships between the {local and pointwise Gevrey $\delta$-regularities}.

\begin{lemma}\label{lem:Gevrey-inclusion}
	For any $\delta \geq 1$, the local Gevrey class is a proper subset of the pointwise Gevrey class, i.e., {$G^{\delta}_{\operatorname{loc}}(\Omega,X) 
	\subsetneq 
	G^{\delta}_{\operatorname{pw}}(\Omega,X)$}

\end{lemma}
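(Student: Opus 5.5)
The inclusion $G^{\delta}_{\operatorname{loc}}(\Omega,X)\subseteq G^{\delta}_{\operatorname{pw}}(\Omega,X)$ follows directly from the definitions. Let $v\in G^{\delta}_{\operatorname{loc}}(\Omega,X)$ and fix $\boldsymbol{y}\in\Omega$. Since $U_{\boldsymbol{y}}$ is a neighborhood of $\boldsymbol{y}$, we have $\boldsymbol{y}\in U_{\boldsymbol{y}}$. Specializing \eqref{eq:G-regularity-local} to $\boldsymbol{\xi}=\boldsymbol{y}$ gives \eqref{eq:G-regularity-pointwise} with the same $\kappa(\boldsymbol{y})\ge 1$ and $\boldsymbol{\beta}(\boldsymbol{y})$. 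Infinite differentiability is part of both definitions, so nothing further is needed.

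For properness, I would construct a scalar example ($X=\mathbb{R}$, embedded as $v(\boldsymbol{y})\cdot e$ for a fixed unit vector $e\in X$ if a general $X$ is required) depending on only one variable $y_1$. The idea is to exploit the fact that the pointwise bound allows $\beta_1(y)$ to be arbitrarily large and $y$-dependent, with no control over a neighborhood. Take
\[
v(\boldsymbol{y})=\varphi(y_1),\qquad \varphi(t)=\begin{cases} e^{-1/t^{2}}, & t\neq 0,\\ 0, & t=0,\end{cases}
\]
which is $C^\infty$ on $\mathbb{R}$. At every $t\neq0$, $\varphi$ is real analytic. Its derivatives satisfy Cauchy-type estimates $|\varphi^{(n)}(t)|\le C_t R_t^{-n} n!$, so the pointwise bound holds with $\delta=1$, and hence with any $\delta\ge1$ since $n!\le (n!)^\delta$. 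At $t=0$ all derivatives vanish, so the pointwise bound holds trivially with $\kappa=1$ and any $\beta_1(0)>0$. Derivatives in other variables $y_j$, $j\ge2$, vanish identically. Hence $v\in G^{\delta}_{\operatorname{pw}}$ for every $\delta\ge1$.

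It remains to show that $v\notin G^{\delta}_{\operatorname{loc}}$ for a given $\delta\ge1$, and this is the main obstacle, since $e^{-1/t^2}$ may itself be Gevrey of some order near $0$. Indeed, $e^{-1/t^2}$ belongs to $G^{s}$ for $s\ge 3/2$, so for $\delta\ge 3/2$ this specific choice fails. I would therefore replace it by a flat function adapted to $\delta$, namely
\[
\varphi_\delta(t)=\exp\bigl(-\exp(1/|t|)\bigr),\qquad \varphi_\delta(0)=0.
\]
This function is $C^\infty$, real analytic away from $0$, and flat at $0$. It belongs to no Gevrey class at $0$: by the Denjoy–Carleman theorem, a flat function in $G^\delta$ near $0$ must decay no faster than $\exp(-c|t|^{-1/(\delta-1)})$, whereas $\exp(-e^{1/|t|})$ decays faster than that for every $\delta>1$. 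The case $\delta=1$ follows because a nonzero analytic function cannot be flat.

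To make this self-contained, I would use the elementary Taylor-remainder argument. If $|\varphi^{(n)}|\le\kappa\beta^n(n!)^\delta$ on $(-r,r)$, then flatness gives $|\varphi(t)|\le \kappa\beta^n (n!)^{\delta}|t|^n/n!$ for every $n$. Optimizing over $n$ yields $|\varphi(t)|\le C\exp\bigl(-c|t|^{-1/(\delta-1)}\bigr)$ for $\delta>1$, and $\varphi\equiv0$ near $0$ for $\delta=1$. Both contradict the super-exponential decay of $\varphi_\delta$.

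At the points $t\ne0$, the pointwise bound holds by analyticity via Cauchy estimates; at $t=0$ it holds because all derivatives vanish. This gives $v\in G^{\delta}_{\operatorname{pw}}(\Omega,X)\setminus G^{\delta}_{\operatorname{loc}}(\Omega,X)$ whenever $\Omega$ contains a point with $y_1=0$. For a general domain $\Omega$, translate the example to $y_1=y_1^\ast$ for some $\boldsymbol{y}^\ast\in\Omega$.
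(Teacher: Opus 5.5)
Your inclusion argument and your check that $v\in G^{\delta}_{\operatorname{pw}}$ are fine, and for $\delta=1$ the construction works. For $\delta>1$, however, the properness argument has a genuine gap: an inequality is used in the wrong direction.

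Your Taylor-remainder estimate, $|\varphi(t)|\le C\exp\bigl(-c|t|^{-1/(\delta-1)}\bigr)$, is an \emph{upper} bound. A flat function in $G^{\delta}_{\operatorname{loc}}$ must therefore decay \emph{at least} as fast as $\exp\bigl(-c|t|^{-1/(\delta-1)}\bigr)$, not ``no faster''. Denjoy--Carleman only tells you that nonzero flat functions exist in $G^{\delta}$ for $\delta>1$; it gives no lower bound on how fast they decay. The function $\exp\bigl(-\exp(1/|t|)\bigr)$ is far \emph{smaller} than the bound, so it is consistent with it and no contradiction arises.

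In fact your $\varphi_\delta$ belongs to $G^{\delta}_{\operatorname{loc}}(\mathbb{R},\mathbb{R})$ for every $\delta>1$. Fix $0<t\le 1$ and a small $\eta>0$. The map $z\mapsto\exp(-e^{1/z})$ is holomorphic on the disc $|z-t|\le\eta t^2$. There $|\operatorname{Im}(1/z)|\le 2\eta$ and $\operatorname{Re}(1/z)\ge 1/t-2\eta$, hence $|\exp(-e^{1/z})|\le\exp(-c\,e^{1/t})$. Cauchy's estimate then gives
$|\varphi_\delta^{(n)}(t)|\le n!\,(\eta t^2)^{-n}\exp(-c\,e^{1/t})\le C^{n+1}\,n!\,\bigl(\log(n+2)\bigr)^{2n}$,
uniformly in $t$. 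Since $\bigl(\log(n+2)\bigr)^{2n}\le C_\varepsilon^{\,n}(n!)^{\varepsilon}$ for every $\varepsilon>0$, your $v$ lies in $G^{\delta}_{\operatorname{loc}}$ for every $\delta>1$ and does not separate the two classes.

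The repair is to go in the opposite direction. To leave $G^{\delta}_{\operatorname{loc}}$ you need a flat function that decays \emph{more slowly} than every $\exp\bigl(-c|t|^{-1/(\delta-1)}\bigr)$. This is exactly the paper's choice: $f_\sigma(x)=\exp\bigl(-x^{-1/(\sigma-1)}\bigr)$ for $x>0$ and $f_\sigma(x)=0$ for $x\le 0$, with $\sigma>\delta$. Your $e^{-1/t^2}$ is the case $\sigma=3/2$, which is why it only worked for $\delta<3/2$.

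With this function your own Taylor-remainder argument closes the proof. For $\delta>1$, the bound $\exp\bigl(-x^{-1/(\sigma-1)}\bigr)\le C\exp\bigl(-c\,x^{-1/(\delta-1)}\bigr)$ fails as $x\to0^+$, because $1/(\sigma-1)<1/(\delta-1)$. For $\delta=1$, flatness plus analyticity would force $f_\sigma\equiv 0$ near $0$. Done this way, your proof would be more self-contained than the paper's, which cites $f_\sigma\notin G^{\delta}_{\operatorname{loc}}$ from the literature rather than proving it.
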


\begin{proof}
	The inclusion 
	{$G^{\delta}_{\operatorname{loc}}(\Omega,X) 
		\subseteq 
		G^{\delta}_{\operatorname{pw}}(\Omega,X)$}
	is obvious {by the definitions.} To establish the strict inclusion 
	{$G^{\delta}_{\operatorname{loc}}(\Omega,X) 
		\subsetneq 
		G^{\delta}_{\operatorname{pw}}(\Omega,X)$}, we must demonstrate the existence of a function 
		{$v \in G^{\delta}_{\operatorname{pw}}(\Omega,X)$} 		
	such that 	{$v \not\in G^{\delta}_{\operatorname{loc}}(\Omega,X)$}.
	To this end, let us fix an arbitrary parameter $\sigma > \delta$, and consider a generalization of Cauchy's classic {flat function}, defined on $X = {\mathbb{R}}$ as:
	\begin{align*}
		f_{\sigma}(x) =
		\begin{cases}
			\exp\left(-x^{-\frac{1}{\sigma-1}}\right) & \text{if } x > 0, \\
			0 & \text{if } x \leq 0.
		\end{cases}
	\end{align*}
	
	On the one hand, for any $x \neq 0$, $f_{\sigma}$ is real analytic in a neighborhood of $x$, meaning it trivially satisfies the pointwise Gevrey bound \eqref{eq:G-regularity-pointwise} for any $\delta \geq 1$. At $x=0$, all derivatives vanish, i.e., $f_{\sigma}^{(n)}(0) = 0$ for all $n \in {\mathbb{N}}$, which obviously satisfies the pointwise bound for any chosen constants $\kappa, \beta > 0$. Therefore, 
	$f_{\sigma} \in {G^\delta_{\operatorname{pw}}({\mathbb{R}},{\mathbb{R}})}$.	
	On the other hand, it is a well-known result (cf. \cite[Lemma 1]{Chung2004}, \cite{Chernov2012}) that $f_{\sigma}$ belongs to the local Gevrey class {$G^\sigma_{\operatorname{loc}}({\mathbb{R}},{\mathbb{R}})$}, but fails to be in {$G^\delta_{\operatorname{loc}}({\mathbb{R}},{\mathbb{R}})$} for any 
	{$\delta \in [1,\sigma)$}. Specifically, the uniform local bound required by Definition \ref{def:Gevrey-Class-Local} is violated around $x=0$: for any neighborhood $(-\varepsilon, \varepsilon)$ and any constants $\kappa, \beta > 0$, there exist an integer $n \in {\mathbb{N}}$ and a point $\xi \in (-\varepsilon, \varepsilon)$ such that
	\begin{equation*}
		{\left|f_{\sigma}^{(n)}(\xi)\right|} > \kappa \, \beta^n (n!)^\delta.
	\end{equation*}
	{Consequently, $f_{\sigma}$ is not locally Gevrey of order $\delta$ at the origin.}
	\hfill
\end{proof}
\begin{remark}\label{rem:nowhere_Gevrey}
	For any $\sigma>\delta \geq 1$, it is possible to construct a pathological function that globally belongs to $f_{\sigma} \in {G^\sigma_{\operatorname{pw}}({\mathbb{R}},{\mathbb{R}})}$, yet is nowhere in {$G^\delta_{\operatorname{loc}}({\mathbb{R}},{\mathbb{R}})$}. Following Chung et al. \cite[Theorem 2]{Chung2004}, such a function can be explicitly defined as:
	\begin{equation}
		G(x) = \sum_{n  \in \mathbb{N}} \exp(n!)\, F\left(\{2^n x\}\right),
	\end{equation}
	where $F(\xi) = f_{\sigma}(\xi)f_{\sigma}(1-\xi)$ is a smooth bump function, and $\{\cdot\}$ indicates the fractional part.
\end{remark}

\begin{lemma}\label{lemma:equiv defs}
	Suppose $v: \Omega \to X$ is an infinitely differentiable function. Then the following characterizations hold:
	\begin{enumerate}[label={\rm (\roman*)}]
		\item \label{item:equiv-local} 
		{$v \in G^{\delta}_{\operatorname{loc}}(\Omega,X)$} if and only if for every $\boldsymbol{y} \in \Omega$, there exist an open neighborhood $U_{{\boldsymbol{y}}} \subseteq \Omega$ of $\boldsymbol{y}$ and a sequence of positive functions ${\boldsymbol{\beta}}(\boldsymbol{y}) = \left(\beta_j(\boldsymbol{y})\right)_{j \in \mathbb{N}}$ such that 
		\begin{equation} \label{eq:G-regularity-local-equiv}
			\left\| \partial^{\boldsymbol{\nu}} v(\boldsymbol{\xi}) \right\|_X 
			\leq  
			{\boldsymbol{\beta}}(\boldsymbol{y})^{{\boldsymbol{\nu}}} \left( |{\boldsymbol{\nu}}|! \right)^\delta, \quad 
			\forall {\boldsymbol{\nu}} \in \mathcal{F} \setminus \left\{\boldsymbol{0}\right\}, \; 
			\forall \boldsymbol{\xi} \in U_{{\boldsymbol{y}}}.
		\end{equation}
		
		\item \label{item:equiv-pointwise}  
		{$v \in G^{\delta}_{\operatorname{pw}}(\Omega,X)$} if and only if for every $\boldsymbol{y} \in \Omega$, there exists a sequence of positive functions ${\boldsymbol{\beta}}(\boldsymbol{y}) = \left(\beta_j(\boldsymbol{y})\right)_{j \in \mathbb{N}}$ such that 
		\begin{equation} \label{eq:G-regularity-equiv}
			\left\| \partial^{\boldsymbol{\nu}} v(\boldsymbol{y}) \right\|_X  
			\leq 
			{\boldsymbol{\beta}}(\boldsymbol{y})^{{\boldsymbol{\nu}}} \left( |{\boldsymbol{\nu}}|! \right)^\delta, \quad 
			\forall {\boldsymbol{\nu}} \in \mathcal{F} \setminus \left\{\boldsymbol{0}\right\}.
		\end{equation}
	\end{enumerate}
\end{lemma}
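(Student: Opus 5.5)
The plan is to prove both equivalences by the same two-directional argument. The point is that the constant $\kappa(\boldsymbol{y})\ge 1$ can be absorbed into the sequence $\boldsymbol{\beta}(\boldsymbol{y})$ because $|\boldsymbol{\nu}|\ge 1$ once $\boldsymbol{\nu}=\boldsymbol{0}$ is excluded. Conversely, the case $\boldsymbol{\nu}=\boldsymbol{0}$ is recovered by enlarging $\kappa$ to dominate $\|v\|_X$. I would write out part \ref{item:equiv-local} in detail and note that part \ref{item:equiv-pointwise} is the same argument with $U_{\boldsymbol{y}}$ replaced by the single point $\{\boldsymbol{y}\}$.

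\emph{Direction ``$\Leftarrow$''.} Assume \eqref{eq:G-regularity-local-equiv} holds on $U_{\boldsymbol{y}}$ for all $\boldsymbol{\nu}\neq\boldsymbol{0}$. The only missing case is $\boldsymbol{\nu}=\boldsymbol{0}$, where we need $\|v(\boldsymbol{\xi})\|_X\le\kappa(\boldsymbol{y})$ uniformly on a neighborhood. Since $v$ is differentiable, it is continuous, so I will shrink $U_{\boldsymbol{y}}$ to a smaller open neighborhood $U'_{\boldsymbol{y}}$ on which $\|v(\boldsymbol{\xi})\|_X\le \|v(\boldsymbol{y})\|_X+1$. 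Then I set $\kappa(\boldsymbol{y}):=\|v(\boldsymbol{y})\|_X+1\ge 1$. For $\boldsymbol{\nu}\ne\boldsymbol{0}$ the bound still holds because $\kappa\ge1$. In the pointwise case no shrinking is needed: simply take $\kappa(\boldsymbol{y})=\max\{1,\|v(\boldsymbol{y})\|_X\}$.

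\emph{Direction ``$\Rightarrow$''.} Assume \eqref{eq:G-regularity-local} holds with $\kappa(\boldsymbol{y})\ge1$ and $\boldsymbol{\beta}(\boldsymbol{y})$. I will define $\tilde\beta_j(\boldsymbol{y}):=\kappa(\boldsymbol{y})\beta_j(\boldsymbol{y})$. For $\boldsymbol{\nu}\ne\boldsymbol{0}$ we have $|\boldsymbol{\nu}|\ge1$ and $\kappa\ge1$, so $\kappa\le\kappa^{|\boldsymbol{\nu}|}$. Hence
\[
\kappa(\boldsymbol{y})\boldsymbol{\beta}(\boldsymbol{y})^{\boldsymbol{\nu}}(|\boldsymbol{\nu}|!)^\delta\le \kappa(\boldsymbol{y})^{|\boldsymbol{\nu}|}\boldsymbol{\beta}(\boldsymbol{y})^{\boldsymbol{\nu}}(|\boldsymbol{\nu}|!)^\delta=\tilde{\boldsymbol{\beta}}(\boldsymbol{y})^{\boldsymbol{\nu}}(|\boldsymbol{\nu}|!)^\delta ,
\]
using $\prod_j(\kappa\beta_j)^{\nu_j}=\kappa^{|\boldsymbol{\nu}|}\boldsymbol{\beta}^{\boldsymbol{\nu}}$. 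This gives \eqref{eq:G-regularity-local-equiv} on the same $U_{\boldsymbol{y}}$, and the same computation at the single point $\boldsymbol{y}$ gives \eqref{eq:G-regularity-equiv}.

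There is no real obstacle. The only point needing care is the $\boldsymbol{\nu}=\boldsymbol{0}$ case in the local ``$\Leftarrow$'' direction, which requires shrinking the neighborhood via continuity of $v$. Here I would remark that continuity of $v$ in the topology of $\Omega$ is part of the standing infinite differentiability hypothesis. Alternatively, if $\Omega$ carries the product topology and continuity is delicate, one can use the first-order bounds to control $v$ near $\boldsymbol{y}$. I expect the direct continuity argument to suffice.
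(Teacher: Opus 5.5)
Your proposal is correct and follows the same route as the paper: you absorb $\kappa(\boldsymbol{y})\ge 1$ into $\bar{\boldsymbol{\beta}}(\boldsymbol{y})=\kappa(\boldsymbol{y})\boldsymbol{\beta}(\boldsymbol{y})$ using $\kappa\le\kappa^{|\boldsymbol{\nu}|}$ for $\boldsymbol{\nu}\neq\boldsymbol{0}$, and you recover the case $\boldsymbol{\nu}=\boldsymbol{0}$ by shrinking the neighborhood via continuity of $v$. Your use of continuity at $\boldsymbol{y}$ with tolerance $1$ is slightly cleaner than the paper's appeal to the compact closure of a shrunken neighborhood, which is not available in the infinite-dimensional (non-locally-compact) parameter space $\mathbb{R}^{\mathbb{N}}$.
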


\begin{proof} 
	We establish part \ref{item:equiv-local}, as assertion \ref{item:equiv-pointwise} follows by identical reasoning evaluated strictly at $\boldsymbol{\xi} = \boldsymbol{y}$.
	
	Suppose {$v \in G^{\delta}_{\operatorname{loc}}(\Omega,X)$}. By definition, there exists a bounding function $\kappa(\boldsymbol{y}) \ge 1$ and a sequence ${\boldsymbol{\beta}}(\boldsymbol{y})$ such that the Gevrey bound holds for all ${\boldsymbol{\nu}} \in \mathcal{F}$. We define a modified sequence $\bar{{\boldsymbol{\beta}}}(\boldsymbol{y}) := \kappa(\boldsymbol{y}){\boldsymbol{\beta}}(\boldsymbol{y})$. For any multi-index ${\boldsymbol{\nu}} \ne \boldsymbol{0}$, we have $|{\boldsymbol{\nu}}| \ge 1$, which implies $\kappa(\boldsymbol{y}) \le \kappa(\boldsymbol{y})^{|{\boldsymbol{\nu}}|}$ since $\kappa(\boldsymbol{y}) \ge 1$. Consequently,
	$$\kappa(\boldsymbol{y}){\boldsymbol{\beta}}(\boldsymbol{y})^{{\boldsymbol{\nu}}} \le \kappa(\boldsymbol{y})^{|{\boldsymbol{\nu}}|}{\boldsymbol{\beta}}(\boldsymbol{y})^{{\boldsymbol{\nu}}} = \bar{{\boldsymbol{\beta}}}(\boldsymbol{y})^{{\boldsymbol{\nu}}}.$$
	Replacing the original sequence with $\bar{{\boldsymbol{\beta}}}(\boldsymbol{y})$ immediately yields the desired bound \eqref{eq:G-regularity-local-equiv}.
	
	Conversely, suppose \eqref{eq:G-regularity-local-equiv} holds. For ${\boldsymbol{\nu}} \ne \boldsymbol{0}$, this matches the standard Gevrey condition with $\kappa(\boldsymbol{y}) \equiv 1$. It remains to verify the case ${\boldsymbol{\nu}} = \boldsymbol{0}$. Since $v$ is infinitely differentiable, it is continuous, implying that $\left\|v(\boldsymbol{\xi})\right\|_X$ is bounded uniformly on the compact closure of an adequately shrunken neighborhood $U_{{\boldsymbol{y}}}$. By choosing $\kappa(\boldsymbol{y}) := \max\big(1, \sup_{\boldsymbol{\xi} \in U_{{\boldsymbol{y}}}} \left\|v(\boldsymbol{\xi})\right\|_X\big)$, the bound is satisfied for all ${\boldsymbol{\nu}} \in \mathcal{F}$, confirming that 
	{$v \in G^{\delta}_{\operatorname{loc}}(\Omega,X)$}.
	\hfill
\end{proof}

\subsection{Falling factorial} \label{sec: multiindex}
The falling factorial $\left(\tfrac{1}{2}\right)_m$ is defined by
\begin{equation*}
	\left(\tfrac{1}{2}\right)_m
	:=
	\begin{cases}
		1 \quad & \text{if } m=0, \\
		\displaystyle \prod_{{k=0}}^{m-1} \left(\tfrac{1}{2} - k\right)& \text{if } m > 0. 
	\end{cases}	
\end{equation*}	
For $m \in {\mathbb N}_0$, where ${\mathbb N}_0$ is the set of nonnegative integers, we introduce the quantity
\begin{equation*}
	\left[\tfrac{1}{2} \right]_{m}  := {\left|\left(\tfrac{1}{2}\right)_m\right|}.
\end{equation*}	
An immediate consequence of this definition is that
\begin{equation} \label{ineq: ffac_m}
	\left[\tfrac{1}{2} \right]_{m} 
	\ \le \ m!
	\ \le \ 
	2^{m+1} \left[\tfrac{1}{2} \right]_{m} .
\end{equation}	
Furthermore, we have the following convolution-type bounds:
\begin{subequations}\label{ineq: sum-ffac a}
	\begin{align} 
		\sum_{j=0}^{m-1}\left[\tfrac{1}{2} \right]_{m-j} \left[\tfrac{1}{2} \right]_{j}  \ &\le \  3 \left[\tfrac{1}{2} \right]_{m} , \label{ineq: sum-ffac}\\
		\sum_{j=0}^{m}\left[\tfrac{1}{2} \right]_{m+1-j} \left[\tfrac{1}{2} \right]_{j}  \ &\le \  \left[\tfrac{1}{2} \right]_{m+1} . \label{ineq: sum-ffac 2}
	\end{align}	
\end{subequations}
For the proofs of these inequalities, we refer to \cite[Lemma 2.3]{CL2024} and \cite[Lemma 2.1]{CL2024b}.
To streamline the presentation, we introduce the shorthand notation
\begin{equation*}
	\phi_{\delta}(\boldsymbol{\nu}) :=  \left[\tfrac{1}{2} \right]_{{\left|{\boldsymbol{\nu}}\right|}}   ({\left|{\boldsymbol{\nu}}\right|}!)^{\delta-1}, {\qquad \boldsymbol{\nu} \in \mathcal F.}
\end{equation*}

In our subsequent analysis, we will frequently invoke the following combinatorial inequalities:
\begin{equation} \label{frac{1}{4}...}	
	\sum_{{\boldsymbol{0} \le {\boldsymbol{m}} < {\boldsymbol{\nu}}}} \binom{{\boldsymbol{\nu}}}{{\boldsymbol{m}}}\phi_{\delta}({{\boldsymbol{\nu}}-{\boldsymbol{m}}})\phi_{\delta}({{\boldsymbol{m}}})
	\  \le \
	3\phi_{\delta}({{\boldsymbol{\nu}}})
	\ \le \ 	
	3(|{\boldsymbol{\nu}}|!)^\delta,		
\end{equation}	
and, for any sequence ${\boldsymbol{e}} \in \mathcal F$ of length ${\left|{\boldsymbol{e}}\right|}=1$,
\begin{equation} \label{fall_fac_ineq}	
	\sum_{{\boldsymbol{0} \leq}{\boldsymbol{m}} \le {\boldsymbol{\nu}}} \binom{{\boldsymbol{\nu}}}{{\boldsymbol{m}}}\phi_{\delta}({{\boldsymbol{\nu}}+{\boldsymbol{e}}-{\boldsymbol{m}}})\phi_{\delta}({{\boldsymbol{m}}})
	\  \le \
	\phi_{\delta}({{\boldsymbol{\nu}}+{\boldsymbol{e}}})
	\ \le \ 	
	(|{\boldsymbol{\nu}}+{\boldsymbol{e}}|!)^\delta.		
\end{equation}	
These bounds follow as direct consequences of \eqref{ineq: sum-ffac a}, the elementary inequality $n!m!\leq (n+m)!$ for all $n,m\in {\mathbb{N}}_0$, and the multivariate Vandermonde identity (cf.\ \cite[Lemma 7.1]{CL2024}).

Furthermore, we recall the {multinomial theorem} (see e.g. \cite[p.~28]{Stanley2011}), which states that for any integer $k \ge 0$ and any number sequence ${\boldsymbol{z}} = (z_j)_{j \in {\mathbb N}}$, the $k$-th power of its sum can be expanded over multi-indices as
\begin{equation} \label{eq:multinomial}
	\left( \sum_{j \in {\mathbb N}} z_j \right)^k 
	= \sum_{\boldsymbol{\nu} \in \mathcal F \atop {\left|\boldsymbol{\nu}\right|} = k} \frac{k!}{\boldsymbol{\nu}!} {\boldsymbol{z}}^{\boldsymbol{\nu}},
\end{equation}
where the sum is taken over all finitely supported multi-indices $\boldsymbol{\nu} \in \mathcal F$ of length ${\left|\boldsymbol{\nu}\right|} = k$. By applying a formal summation over $k \ge 0$, \eqref{eq:multinomial} naturally extends to the expansion of geometric series or exponential functions involving multi-indices.

A prominent application of this extension, which will strongly facilitate our subsequent estimates in Section \ref{sec: truncation}, is a multi-index identity arising from the second-order partial derivatives of a multivariate geometric series. Namely, for any number sequence $\boldsymbol{\mathsf{b}} = ({\mathsf{b}}_j)_{j \in {\mathbb N}}$ satisfying $\left\|{\boldsymbol{\mathsf{b}}}\right\|_{\ell^1} < 1$ and each index $i \in {\mathbb N}$, we have
\begin{align}\label{eq:geometric series}
	\sum_{\boldsymbol{\nu}\in\mathcal F}
	\frac{{\left|\boldsymbol{\nu}\right|}!}{\boldsymbol{\nu}!}
	\nu_i(\nu_i-1)
	\boldsymbol{\mathsf{b}}^{\boldsymbol{\nu}}
	=
	\frac{2\,b_i^2}{(1-\sum_{j \in {\mathbb N}}{\mathsf{b}}_j)^3}.
\end{align} 
To justify this identity, we note that the corresponding generating function expands into a geometric series:
\begin{align*}
	\frac{1}{1-\sum_{j \in {\mathbb N}}{\mathsf{b}}_j z_j} 
	= \sum_{k \in {\mathbb N}_0} \left(\sum_{j \in {\mathbb N}}{\mathsf{b}}_j z_j\right)^k 
	= \sum_{\boldsymbol{\nu}\in\mathcal F} \frac{{\left|\boldsymbol{\nu}\right|}!}{\boldsymbol{\nu}!} \boldsymbol{\mathsf{b}}^{\boldsymbol{\nu}} {\boldsymbol{z}}^{\boldsymbol{\nu}},
\end{align*}
for any number sequence $\boldsymbol{z}=(z_j)_{j \in \mathbb N}$ with $|z_j| \le 1$, where the last equality follows from the multinomial theorem. Taking the second-order partial derivative with respect to $z_i$ on both sides yields
\begin{align*}
	\frac{\partial^2}{\partial z_i^2} \left( \frac{1}{1-\sum_{j \in {\mathbb N}}{\mathsf{b}}_j z_j} \right) 
	= \sum_{\boldsymbol{\nu}\in\mathcal F} \frac{{\left|\boldsymbol{\nu}\right|}!}{\boldsymbol{\nu}!}\, \nu_i(\nu_i-1) z_i^{-2}\, \boldsymbol{\mathsf{b}}^{\boldsymbol{\nu}} {\boldsymbol{z}}^{\boldsymbol{\nu}}.
\end{align*}
Alternatively, differentiating the analytic expression of the generating function directly gives
\begin{align*}
	\frac{\partial^2}{\partial z_i^2} \left( \frac{1}{1-\sum_{j \in {\mathbb N}}{\mathsf{b}}_j { z}_j}\right)
	= \frac{2\,b_i^2}{(1-\sum_{j \in {\mathbb N}}{\mathsf{b}}_j z_j)^3}.
\end{align*}
Evaluating both derivative results at $z_j = 1$ for all $j \in {\mathbb N}$ establishes the desired equality in \eqref{eq:geometric series}.

\section{Gevrey regularities of the weak solution}
\label{sec:Gevrey regularity of the weak solution}
In this section, we investigate {the local and pointwise Gevrey $\delta$-regularities}  of the weak solution map ${\boldsymbol{y}} \mapsto u({\boldsymbol{y}})$ to the parametric equation \eqref{ellip}. To facilitate the analysis of partial parametric derivatives, we first introduce necessary functional framework and weighted norm structures.

For $1 \le p \le \infty$ and a positive function ${\lambda}$ on $D$, we  introduce the weighted {norms}
$$
\left\|{v}\right\|_{p,\lambda}:= \left(\int_{{D}} v({\boldsymbol{x}})^p\, \lambda({\boldsymbol{x}}) \, {\rm d}{\boldsymbol{x}}\right)^{\frac{1}{p}},\ \ 1\le p < \infty, 
$$	
and
{$$\left\|{v}\right\|_{\infty,\lambda}:= \textrm{ess sup}_{{\boldsymbol{x}}\in D} {{\left|v({\boldsymbol{x}})\right|}}\,{\lambda({\boldsymbol{x}})}$$}for a function $v$ defined on $D$ ($\lambda$ is dropped in the case $\lambda =1$). For $r \in {\mathbb N}_0$, we define the weighted Sobolev norm 
{\begin{equation*}
		\left\|{v}\right\|_{{W^r_{p,\lambda}}}
		:= \
	\max_{|{\boldsymbol{\alpha}}| \le r} \left\|{\mathsf{D}^{\boldsymbol{\alpha}} v}\right\|_{p,\lambda}.
\end{equation*}	}
We also use the notation $H^r_{\lambda}:= W^r_{2,\lambda}$. Recall that $V = H^1_0(D)$ endowed with the norm $\|v\|_V := \|\nabla v\|_2$. 

{{}The following theorem establishes the parametric Gevrey $\delta$-regularity for the solution to the sub-problem on any linear subspace $X \subseteq V$, based on certain Gevrey $\delta$-regularity assumptions on the diffusion coefficient $a({\boldsymbol{y}})$. Note that this includes the exact weak solution to \eqref{ellip} when $X = V$.}

	\begin{theorem}\label{thm:G-regularity-u(by)}
For a given $\delta\ge 1$,
assume that there exist a  positive function $\kappa({\boldsymbol{y}})\ge 1$ and  sequence ${\boldsymbol{\beta}}({\boldsymbol{y}})$ of positive functions in variable ${\boldsymbol{y}}$ such that 
	\begin{equation} \label{cond:G-regularity-a(by)-diff-coeff}
	\forall  {\boldsymbol{y}}  \in {{\mathbb R}^{\mathbb{N}}} \quad \exists U_{{\boldsymbol{y}}}: \quad \forall \boldsymbol{\xi} \in U_{{\boldsymbol{y}}}  \quad
		\left\|{\partial^{\boldsymbol{\nu}} a(\boldsymbol{\xi})}\right\|_{\infty,a(\boldsymbol{\xi})^{-1}}	
		\ \le \
		\kappa({\boldsymbol{y}}){\boldsymbol{\beta}}({\boldsymbol{y}})^{{\boldsymbol{\nu}}}\,(|{\boldsymbol{\nu}}|!)^\delta
	\end{equation}	
	for some positive function $\kappa({\boldsymbol{y}})\ge 1$ and  sequence ${\boldsymbol{\beta}}({\boldsymbol{y}})$ of positive functions in variable ${\boldsymbol{y}}$.
	
	{{}	Let $X \subseteq V$ be a linear subspace and consider the associated sub-problem of \eqref{weak-form}: Find $u \in X$ such that 
	\begin{align}\label{ellip sub-prob}
		\int_{D} a({\boldsymbol{y}})({\boldsymbol{x}}) \nabla u({\boldsymbol{y}})({\boldsymbol{x}}) \cdot \nabla v({\boldsymbol{x}}) \, {\rm d} {\boldsymbol{x}} = \langle f, v \rangle \quad \forall v \in X.
	\end{align}
	Then, for all ${\boldsymbol{\nu}} \in \mathcal F$, the parametric solution $u$ to \eqref{ellip sub-prob} satisfies}
	\begin{equation} \label{ineq:G-regularity-u(by)}
		\begin{aligned}
			\forall  {\boldsymbol{y}}  \in {{\mathbb R}^{\mathbb{N}}} \quad \exists U_{{\boldsymbol{y}}}: \quad \forall \boldsymbol{\xi} \in U_{{\boldsymbol{y}}}  \quad
			\left\|{\partial^{\boldsymbol{\nu}} u(\boldsymbol{\xi})}\right\|_{V}
			&\le \
			\left(12 \kappa({\boldsymbol{y}}) {\boldsymbol{\beta}}({\boldsymbol{y}})\right)^{{\boldsymbol{\nu}}}\,\phi_\delta({\boldsymbol{\nu}}) \left\|{{f}}\right\|_{V'} 
			\left\|{a(\boldsymbol{\xi})^{-1}}\right\|_{\infty}
			\\&
			\le \
			\left(12 \kappa({\boldsymbol{y}}) {\boldsymbol{\beta}}({\boldsymbol{y}})\right)^{{\boldsymbol{\nu}}}\,(|{\boldsymbol{\nu}}|!)^\delta \left\|{{f}}\right\|_{V'} 
			\left\|{a(\boldsymbol{\xi})^{-1}}\right\|_{\infty}.
		\end{aligned}
	\end{equation}	
\end{theorem}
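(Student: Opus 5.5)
Fix ${\boldsymbol{y}}$ and take the neighborhood $U_{{\boldsymbol{y}}}$ from \eqref{cond:G-regularity-a(by)-diff-coeff}; throughout, $\boldsymbol{\xi}\in U_{{\boldsymbol{y}}}$ is arbitrary and $\kappa=\kappa({\boldsymbol{y}})$, ${\boldsymbol{\beta}}={\boldsymbol{\beta}}({\boldsymbol{y}})$ are frozen. The natural norm is the weighted energy norm $\|\nabla w\|_{2,a(\boldsymbol{\xi})}$. We therefore first prove the stronger weighted estimate
\begin{equation*}
\left\|\nabla\partial^{\boldsymbol{\nu}} u(\boldsymbol{\xi})\right\|_{2,a(\boldsymbol{\xi})} \le (12\kappa{\boldsymbol{\beta}})^{{\boldsymbol{\nu}}}\,\phi_\delta({\boldsymbol{\nu}})\,\|f\|_{V'}\,\|a(\boldsymbol{\xi})^{-1}\|_\infty^{1/2},
\end{equation*}
and then pass to $V$ via $\|\nabla w\|_2\le \|a(\boldsymbol{\xi})^{-1}\|_\infty^{1/2}\|\nabla w\|_{2,a(\boldsymbol{\xi})}$. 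The second inequality in \eqref{ineq:G-regularity-u(by)} then follows from $\left[\tfrac12\right]_m\le m!$ in \eqref{ineq: ffac_m}.

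For the case ${\boldsymbol{\nu}}=\boldsymbol 0$, test \eqref{ellip sub-prob} with $v=u(\boldsymbol{\xi})\in X$. This gives
\begin{equation*}
\|\nabla u\|_{2,a}^2\le \|f\|_{V'}\|\nabla u\|_2\le \|f\|_{V'}\|a^{-1}\|_\infty^{1/2}\|\nabla u\|_{2,a}.
\end{equation*}
For ${\boldsymbol{\nu}}\neq\boldsymbol 0$, differentiate \eqref{ellip sub-prob} with respect to the parameters. The right-hand side is parameter-free, so Leibniz's rule yields, for all $v\in X$,
\begin{equation*}
\int_D a\,\nabla\partial^{{\boldsymbol{\nu}}}u\cdot\nabla v=-\sum_{\boldsymbol{0}\le{\boldsymbol{m}}<{\boldsymbol{\nu}}}\binom{{\boldsymbol{\nu}}}{{\boldsymbol{m}}}\int_D \partial^{{\boldsymbol{\nu}}-{\boldsymbol{m}}}a\,\nabla\partial^{{\boldsymbol{m}}}u\cdot\nabla v .
\end{equation*}
This requires justifying that $\partial^{\boldsymbol{m}} u(\boldsymbol{\xi})\in X$, i.e.\ that $u$ is differentiable as an $X$-valued map. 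I would argue this via difference quotients in the finite-dimensional section of ${\boldsymbol{\nu}}$ together with the implicit function theorem, taking $X$ closed (or arguing within its closure). Now test with $v=\partial^{{\boldsymbol{\nu}}}u$ and write $\partial^{{\boldsymbol{\nu}}-{\boldsymbol{m}}}a=a\cdot(\partial^{{\boldsymbol{\nu}}-{\boldsymbol{m}}}a/a)$. Cauchy--Schwarz in the weighted $L^2$ space together with \eqref{cond:G-regularity-a(by)-diff-coeff} gives the recursion
\begin{equation*}
\|\nabla\partial^{{\boldsymbol{\nu}}}u\|_{2,a}\le \sum_{{\boldsymbol{m}}<{\boldsymbol{\nu}}}\binom{{\boldsymbol{\nu}}}{{\boldsymbol{m}}}\kappa\,{\boldsymbol{\beta}}^{{\boldsymbol{\nu}}-{\boldsymbol{m}}}(|{\boldsymbol{\nu}}-{\boldsymbol{m}}|!)^\delta\,\|\nabla\partial^{{\boldsymbol{m}}}u\|_{2,a}.
\end{equation*}

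The main step is closing this recursion by induction on $|{\boldsymbol{\nu}}|$ with the ansatz $\|\nabla\partial^{{\boldsymbol{m}}}u\|_{2,a}\le C^{{\boldsymbol{m}}}\phi_\delta({\boldsymbol{m}})\,\|f\|_{V'}\|a^{-1}\|^{1/2}_\infty$, where $C=12\kappa{\boldsymbol{\beta}}$. The difficulty is that \eqref{frac{1}{4}...} is phrased with $\phi_\delta({\boldsymbol{\nu}}-{\boldsymbol{m}})$, whereas the coefficient supplies $(|{\boldsymbol{\nu}}-{\boldsymbol{m}}|!)^\delta$. I would convert using \eqref{ineq: ffac_m}: $(k!)^\delta\le 2^{k+1}\phi_\delta(k)\le 4^{k}\phi_\delta(k)$ for $k\ge1$. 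Then factor $C^{{\boldsymbol{m}}}(4\kappa{\boldsymbol{\beta}})^{{\boldsymbol{\nu}}-{\boldsymbol{m}}}=C^{{\boldsymbol{\nu}}}\,3^{-|{\boldsymbol{\nu}}-{\boldsymbol{m}}|}\kappa^{1-|{\boldsymbol{\nu}}-{\boldsymbol{m}}|}$. Since $|{\boldsymbol{\nu}}-{\boldsymbol{m}}|\ge1$ and $\kappa\ge1$, this is at most $C^{{\boldsymbol{\nu}}}/3$. Applying \eqref{frac{1}{4}...}, the sum is then bounded by $C^{{\boldsymbol{\nu}}}\cdot\tfrac13\cdot 3\phi_\delta({\boldsymbol{\nu}})$, which closes the induction. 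The bookkeeping of the powers $2^{k+1}$ against the factor $12=4\cdot3$ is exactly where the constant must be checked carefully. If it falls short, the fallback is to absorb the factor $2$ using the sharper inequality \eqref{ineq: sum-ffac 2}.
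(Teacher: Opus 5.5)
Your proposal is correct and follows the paper's proof essentially step for step: the weighted energy norm $\|\nabla\cdot\|_{2,a(\boldsymbol{\xi})}$, the base case by testing with $u$, differentiating and testing with $\partial^{\boldsymbol{\nu}}u$, and then closing the induction via $\sum_{\boldsymbol{m}<\boldsymbol{\nu}}\binom{\boldsymbol{\nu}}{\boldsymbol{m}}\phi_\delta(\boldsymbol{\nu}-\boldsymbol{m})\phi_\delta(\boldsymbol{m})\le 3\phi_\delta(\boldsymbol{\nu})$. The only differences are that the paper rewrites the hypothesis as $2\kappa(2\boldsymbol{\beta})^{\boldsymbol{\nu}}\phi_\delta(\boldsymbol{\nu})$ and uses $(6\kappa)^{|\boldsymbol{m}|}\le(6\kappa)^{|\boldsymbol{\nu}|-1}$, where your $2^{k+1}\le 4^k$ and $3^{-|\boldsymbol{\nu}-\boldsymbol{m}|}\kappa^{1-|\boldsymbol{\nu}-\boldsymbol{m}|}\le\tfrac13$ do the same job (so the constant $12$ closes and no fallback is needed), and that your remark that $\partial^{\boldsymbol{m}}u(\boldsymbol{\xi})\in X$ requires $X$ closed (automatic for $X=V_h$) is more careful than the paper, which only invokes linearity of $X$.
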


\begin{proof}
	We adopt the abbreviations 
		$\kappa := \kappa(\boldsymbol{y})$ and 
	${\boldsymbol{\beta}} := {\boldsymbol{\beta}}(\boldsymbol{y})$.
	We have by \eqref{cond:G-regularity-a(by)-diff-coeff}  and \eqref{ineq: ffac_m},
	\begin{equation} \label{ineq:G-regularity-a(by)-diff-coeff}
		\left\|{\partial^{\boldsymbol{\nu}} a(\boldsymbol{\xi})}\right\|_{\infty,a(\boldsymbol{\xi})^{-1}}
		\le 2\kappa\, {(2{\boldsymbol{\beta}})^{\boldsymbol{\nu}}}\,\phi_{\delta}({{\boldsymbol{\nu}}}), \quad \forall \boldsymbol{\xi} \in U_{{\boldsymbol{y}}}, \; \forall {\boldsymbol{\nu}} \in \mathcal F.
	\end{equation}	
	{{}By the linearity of the subspace $X \subseteq V$, the parametric derivatives $\partial^{\boldsymbol{\nu}} u$ of the solution $u \in X$ also belong to $X$. We shall prove by induction on ${\boldsymbol{\nu}}$ that for all $\boldsymbol{y} \in \mathbb{R}^{\mathbb{N}}$, the following holds}
	\begin{equation} \label{norm{partial^bs u}{H^1_{a^{-1}}}}
		\left\|{\partial^{\boldsymbol{\nu}} u(\boldsymbol{\xi})}\right\|_{H^1_{a(\boldsymbol{\xi})}}	
		\le \left(12\kappa{\boldsymbol{\beta}}\right)^{{\boldsymbol{\nu}}} \, \phi_{\delta}({{\boldsymbol{\nu}}}) \, \left\|{f}\right\|_{V'} \,
		\left\|{a(\boldsymbol{\xi})^{-1}}\right\|_{\infty}^{\frac{1}{2}}
		\quad \forall \boldsymbol{\xi} \in U_{{\boldsymbol{y}}}, \; \forall {\boldsymbol{\nu}} \in \mathcal F.
	\end{equation}	
	For ${\boldsymbol{\nu}} = \boldsymbol{0}$, the weak formulation \eqref{ellip sub-prob} implies
	\begin{equation} \label{weak-form1}
		\int_{D} a(\boldsymbol{\xi})|\nabla u(\boldsymbol{\xi})|^2\, {\rm d} {\boldsymbol{x}}
		\le \left\|{f}\right\|_{V'}
		\left\|{a(\boldsymbol{\xi})^{-1}}\right\|_{\infty}^{\frac{1}{2}}
		\left(\int_{D} a(\boldsymbol{\xi})|\nabla u(\boldsymbol{\xi})|^2\, {\rm d} {\boldsymbol{x}}\right)^{1/2},
	\end{equation}
	which simplifies to
	\begin{equation} \label{weak-form2}
		\left(\int_{D} a(\boldsymbol{\xi})|\nabla u(\boldsymbol{\xi})|^2\, {\rm d} {\boldsymbol{x}}\right)^{1/2}
		\le \left\|{f}\right\|_{V'} 
		\left\|{a(\boldsymbol{\xi})^{-1}}\right\|_{\infty}^{\frac{1}{2}}.
	\end{equation}
	Thus, the base case holds.
	
	Assume the claim \eqref{norm{partial^bs u}{H^1_{a^{-1}}}} holds for all ${{\boldsymbol{m}}} < {\boldsymbol{\nu}}$.
	Differentiating the weak formulation \eqref{ellip sub-prob} with respect to the multi-index ${\boldsymbol{\nu}}$ and applying the Leibniz rule, we have for any {{}$v \in X$}:
	\begin{equation*}	
		\sum_{\boldsymbol{0} \le {\boldsymbol{m}} \le {\boldsymbol{\nu}}} \int_{D} \binom{{\boldsymbol{\nu}}}{{\boldsymbol{m}}}\partial^{{\boldsymbol{\nu}}- {\boldsymbol{m}}} a(\boldsymbol{\xi}) \, \left(\nabla \partial^{{\boldsymbol{m}}} u(\boldsymbol{\xi}) \cdot \nabla v\right) \, {\rm d} {\boldsymbol{x}}	
		= 0.
	\end{equation*}	
	By choosing $v = \partial^{{\boldsymbol{\nu}}} u$ and isolating the term ${{\boldsymbol{m}}} = {\boldsymbol{\nu}}$, it follows that
	\begin{equation*}
		\int_{D} a(\boldsymbol{\xi}) {\left|\nabla \partial^{{\boldsymbol{\nu}}} u(\boldsymbol{\xi})\right|}^2 \, {\rm d} {\boldsymbol{x}}	
		= -\sum_{{\boldsymbol{0} \le {\boldsymbol{m}} < {\boldsymbol{\nu}}}}\,\int_{D} \binom{{\boldsymbol{\nu}}}{{\boldsymbol{m}}}\partial^{{\boldsymbol{\nu}}  - {\boldsymbol{m}}} a(\boldsymbol{\xi}) \, \left(\nabla \partial^{{\boldsymbol{m}}} u(\boldsymbol{\xi}) \cdot \nabla \partial^{{\boldsymbol{\nu}}}u(\boldsymbol{\xi})\right) \, {\rm d} {\boldsymbol{x}}.	
	\end{equation*}	
	Applying the Cauchy-Schwartz inequality and dividing both sides by $\left\|{\partial^{{\boldsymbol{\nu}}} u(\boldsymbol{\xi})}\right\|_{H^1_{a(\boldsymbol{\xi})}}$, we deduce:	
	\begin{equation} \label{norm{partial...}}
		\left\|{\partial^{{\boldsymbol{\nu}}} u(\boldsymbol{\xi})}\right\|_{H^1_a(\boldsymbol{\xi})}
		\le \sum_{{\boldsymbol{0} \le {\boldsymbol{m}} < {\boldsymbol{\nu}}}}
		\left\|{\partial^{{\boldsymbol{\nu}}- {\boldsymbol{m}}} a(\boldsymbol{\xi})}\right\|_{\infty,{a(\boldsymbol{\xi})}^{-1}} \, \left\|{\partial^{{{\boldsymbol{m}}}} u(\boldsymbol{\xi})}\right\|_{H^1_{a(\boldsymbol{\xi})}}.
	\end{equation}	
	Substituting \eqref{ineq:G-regularity-a(by)-diff-coeff} and the inductive hypothesis {\eqref{norm{partial^bs u}{H^1_{a^{-1}}}}}, we get:\\
\resizebox{\textwidth}{!}{
	\begin{minipage}{\textwidth}
		\begin{center}
	\begin{align*} 
		\left\|{\partial^{{\boldsymbol{\nu}}} u(\boldsymbol{\xi})}\right\|_{H^1_{a(\boldsymbol{\xi})}}	
		&\le \sum_{{\boldsymbol{0} \le {\boldsymbol{m}} < {\boldsymbol{\nu}}}} \binom{{\boldsymbol{\nu}}}{{\boldsymbol{m}}}
		2\kappa \left(2{\boldsymbol{\beta}}\right)^{{\boldsymbol{\nu}}-{\boldsymbol{m}}} \,\phi_{\delta}({{\boldsymbol{\nu}}-{\boldsymbol{m}}})
		\left(12 \kappa{\boldsymbol{\beta}}\right)^{{\boldsymbol{m}}}\,\phi_{\delta}({{\boldsymbol{m}}})
		\left\|{f}\right\|_{V'}
		\left\|{a(\boldsymbol{\xi})^{-1}}\right\|_{\infty}^{\frac{1}{2}}
		\notag \\
		&\le2\kappa\, (6\kappa)^{{\left|\boldsymbol{\nu}\right|}-1} \left(2{\boldsymbol{\beta}}\right)^{{\boldsymbol{\nu}}} \left\|{f}\right\|_{V'}
		\left\|{a(\boldsymbol{\xi})^{-1}}\right\|_{\infty}^{\frac{1}{2}}
		\sum_{{\boldsymbol{0} \le {\boldsymbol{m}} < {\boldsymbol{\nu}}}}\binom{{\boldsymbol{\nu}}}{{\boldsymbol{m}}}
		\phi_{\delta}({{\boldsymbol{\nu}}-{\boldsymbol{m}}}) \, \phi_{\delta}({{\boldsymbol{m}}}).%
	\end{align*}
	\end{center}
\end{minipage}
}
	Applying property \eqref{frac{1}{4}...}, {the sum in the right-hand side} is bounded by $3\phi_{\delta}({\boldsymbol{\nu}})$.
	This completes the induction for $\left\|{\partial^{{\boldsymbol{\nu}}} u}\right\|_{H^1_{a(\boldsymbol{\xi})}}$.
	
	Finally, using the relationship $\left\|{\partial^{\boldsymbol{\nu}} u(\boldsymbol{\xi})}\right\|_{V} \le 
	\left\|{a(\boldsymbol{\xi})^{-1}}\right\|_{\infty}^{\frac{1}{2}}
	\left\|{\partial^{\boldsymbol{\nu}} u}\right\|_{H^1_{a(\boldsymbol{\xi})}}$ and the fact that $\phi_{\delta}(k) \le (k!)^\delta$, we arrive at the desired estimate \eqref{ineq:G-regularity-u(by)}. 
	\hfill
\end{proof}

	The following preliminary result provides a Gevrey $\delta$-regularity for the diffusion coefficient $a$ based on a Gevrey $\delta$-regularity of the function $Z$ defined as in \eqref{def:diffus-coeff}, which serves as the cornerstone for our subsequent regularity analysis.

\begin{lemma}\label{lemma:G-regularity-a(by)}
	Let  $a$ be a function on ${\mathbb R}^{\mathbb{N}}$ 
	of the form 
	$$a({\boldsymbol{y}}):= \exp(Z({\boldsymbol{y}})). 
	$$ 
	Assume that for all $\boldsymbol{y} \in {\mathbb R}^{{\mathbb{N}}}$ there exist an open neighborhood $U_{{\boldsymbol{y}}}$ of $\boldsymbol{y}$, a function $\rho(\boldsymbol{y})\geq 1$ and a sequence ${{\boldsymbol{\alpha}}(\boldsymbol{y})}$ of positive functions such that 
	\begin{equation} \label{ineq:G-regularity-b(by) 2}
		\left\|{\partial^{\boldsymbol{\nu}} Z(\boldsymbol{\xi})}\right\|_{\infty}	
		\le \rho(\boldsymbol{y}) {\boldsymbol{\alpha}}(\boldsymbol{y})^{{\boldsymbol{\nu}}} \left({\left|{\boldsymbol{\nu}}\right|}!\right)^{\delta}, \quad \forall \boldsymbol{\xi} \in U_{{\boldsymbol{y}}},
		\ \ \forall {\boldsymbol{\nu}} \in \mathcal F\setminus \{\boldsymbol{0}\}.
	\end{equation}	
	Then the following bounds hold true
	\begin{equation} \label{ineq:G-regularity-a(by)}
		\begin{split}
			\left\|{\partial^{\boldsymbol{\nu}} a(\boldsymbol{\xi})}\right\|_{\infty,a(\boldsymbol{\xi})^{-1}}
			&\le \
			\left(4\rho(\boldsymbol{y}) {\boldsymbol{\alpha}}(\boldsymbol{y})\right)^{{\boldsymbol{\nu}}}\,\phi_{\delta}({{\boldsymbol{\nu}}}),  \\
			&\le \
			\left(4\rho(\boldsymbol{y}) {\boldsymbol{\alpha}}(\boldsymbol{y})\right)^{{\boldsymbol{\nu}}}\,(|{\boldsymbol{\nu}}|!)^\delta  \ \ \forall  \boldsymbol{\xi}\in U_{{\boldsymbol{y}}}, \    \forall  {\boldsymbol{y}}  \in \mathbb{R}^{{\mathbb{N}}}, \ \forall {\boldsymbol{\nu}} \in \mathcal F.
		\end{split}
	\end{equation}	
\end{lemma}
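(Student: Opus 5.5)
We would prove the bound by induction on ${\boldsymbol{\nu}}$, working pointwise in ${\boldsymbol{x}}\in D$ for a fixed ${\boldsymbol{y}}$ and $\boldsymbol{\xi}\in U_{{\boldsymbol{y}}}$. We abbreviate $\rho:=\rho({\boldsymbol{y}})$ and ${\boldsymbol{\alpha}}:={\boldsymbol{\alpha}}({\boldsymbol{y}})$, and we set $A_{\boldsymbol{\nu}}:=\|\partial^{\boldsymbol{\nu}} a(\boldsymbol{\xi})\|_{\infty,a(\boldsymbol{\xi})^{-1}}$. The case ${\boldsymbol{\nu}}=\boldsymbol{0}$ is trivial, since $A_{\boldsymbol{0}}=1=\phi_\delta(\boldsymbol{0})$.

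For the inductive step, pick any ${\boldsymbol{e}}$ with $|{\boldsymbol{e}}|=1$ and ${\boldsymbol{e}}\le{\boldsymbol{\nu}}$, and write ${\boldsymbol{\nu}}={\boldsymbol{\mu}}+{\boldsymbol{e}}$. The chain rule gives $\partial^{{\boldsymbol{e}}}a=a\,\partial^{{\boldsymbol{e}}}Z$, so $\partial^{\boldsymbol{\nu}} a=\partial^{\boldsymbol{\mu}}(a\,\partial^{\boldsymbol{e}} Z)$. Applying the Leibniz rule gives
\begin{equation*}
\partial^{{\boldsymbol{\mu}}+{\boldsymbol{e}}}a=\sum_{\boldsymbol{0}\le{\boldsymbol{m}}\le{\boldsymbol{\mu}}}\binom{{\boldsymbol{\mu}}}{{\boldsymbol{m}}}\,\partial^{{\boldsymbol{\mu}}+{\boldsymbol{e}}-{\boldsymbol{m}}}Z\;\partial^{{\boldsymbol{m}}}a .
\end{equation*}
We divide by $a(\boldsymbol{\xi})$ and take sup norms, which yields $A_{{\boldsymbol{\mu}}+{\boldsymbol{e}}}\le\sum_{{\boldsymbol{m}}\le{\boldsymbol{\mu}}}\binom{{\boldsymbol{\mu}}}{{\boldsymbol{m}}}\|\partial^{{\boldsymbol{\mu}}+{\boldsymbol{e}}-{\boldsymbol{m}}}Z(\boldsymbol{\xi})\|_\infty A_{{\boldsymbol{m}}}$. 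Every $Z$-derivative appearing here has nonzero order, so \eqref{ineq:G-regularity-b(by) 2} applies.

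Next we convert the $Z$-bound to falling-factorial form using \eqref{ineq: ffac_m}. For $|{\boldsymbol{k}}|\ge1$ this gives
\begin{equation*}
\|\partial^{{\boldsymbol{k}}}Z(\boldsymbol{\xi})\|_\infty\le 2\rho\,(2{\boldsymbol{\alpha}})^{{\boldsymbol{k}}}\phi_\delta({\boldsymbol{k}})\le (4\rho{\boldsymbol{\alpha}})^{{\boldsymbol{k}}}\,\tfrac12\,\phi_\delta({\boldsymbol{k}}).
\end{equation*}
The second inequality holds because $2\rho\,2^{|{\boldsymbol{k}}|}\le \tfrac12 (4\rho)^{|{\boldsymbol{k}}|}$ when $|{\boldsymbol{k}}|\ge1$ and $\rho\ge1$. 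We insert this together with the induction hypothesis $A_{\boldsymbol{m}}\le(4\rho{\boldsymbol{\alpha}})^{\boldsymbol{m}}\phi_\delta({\boldsymbol{m}})$ for ${\boldsymbol{m}}\le{\boldsymbol{\mu}}<{\boldsymbol{\nu}}$. The powers then combine to $(4\rho{\boldsymbol{\alpha}})^{{\boldsymbol{\nu}}}$, and the remaining combinatorial sum is exactly the left side of \eqref{fall_fac_ineq}. That sum is bounded by $\phi_\delta({\boldsymbol{\mu}}+{\boldsymbol{e}})=\phi_\delta({\boldsymbol{\nu}})$, so $A_{\boldsymbol{\nu}}\le (4\rho{\boldsymbol{\alpha}})^{\boldsymbol{\nu}}\phi_\delta({\boldsymbol{\nu}})$, with even a factor $\tfrac12$ to spare. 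The second inequality of \eqref{ineq:G-regularity-a(by)} then follows from $\phi_\delta({\boldsymbol{\nu}})\le(|{\boldsymbol{\nu}}|!)^\delta$, since $[\tfrac12]_m\le m!$.

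The main obstacle is the constant bookkeeping in the conversion step. The bound \eqref{ineq: ffac_m} costs a factor $2^{|{\boldsymbol{k}}|+1}$, and the prefactor $\rho$ must be absorbed into $\rho^{|{\boldsymbol{k}}|}$. This is where $\rho\ge1$ and the restriction to ${\boldsymbol{k}}\neq\boldsymbol{0}$ are essential. The Leibniz structure guarantees ${\boldsymbol{k}}={\boldsymbol{\mu}}+{\boldsymbol{e}}-{\boldsymbol{m}}\ne\boldsymbol{0}$ in every term, so the absorption goes through.

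A second point to check is that the sum in \eqref{fall_fac_ineq} carries $\binom{{\boldsymbol{\mu}}}{{\boldsymbol{m}}}$ rather than $\binom{{\boldsymbol{\nu}}}{{\boldsymbol{m}}}$. This matches the expansion above precisely because we differentiated $a\,\partial^{{\boldsymbol{e}}}Z$ only $\boldsymbol{\mu}$ times.
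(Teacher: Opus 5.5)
This is essentially the paper's proof: the same induction, the same Leibniz expansion of $\partial^{\boldsymbol{\mu}}(a\,\partial^{\boldsymbol{e}}Z)$ carrying $\binom{\boldsymbol{\mu}}{\boldsymbol{m}}$, the same conversion of the $Z$-bound via \eqref{ineq: ffac_m}, and the same closing step via \eqref{fall_fac_ineq}. The only difference is where $\rho$ is absorbed. You absorb it into each $Z$-factor separately. The paper instead bounds $2\rho\,(2\boldsymbol{\alpha})^{\boldsymbol{\nu}+\boldsymbol{e}-\boldsymbol{m}}(4\rho\boldsymbol{\alpha})^{\boldsymbol{m}}\le 2\rho\,(2\rho)^{|\boldsymbol{\nu}|}(2\boldsymbol{\alpha})^{\boldsymbol{\nu}+\boldsymbol{e}}$ after multiplying, using $|\boldsymbol{m}|\le|\boldsymbol{\nu}|$.

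One correction is needed. Your inequality $2\rho\,2^{|\boldsymbol{k}|}\le\tfrac12(4\rho)^{|\boldsymbol{k}|}$ is false for $|\boldsymbol{k}|=1$, where it reads $4\rho\le 2\rho$. What holds for $|\boldsymbol{k}|\ge 1$ and $\rho\ge1$ is
\[
2\rho\,2^{|\boldsymbol{k}|}\le(4\rho)^{|\boldsymbol{k}|}.
\]
This weaker inequality is all the induction needs, and it gives exactly $(4\rho\boldsymbol{\alpha})^{\boldsymbol{\nu}}\phi_\delta(\boldsymbol{\nu})$. So drop the claimed spare factor $\tfrac12$: the term with $\boldsymbol{m}=\boldsymbol{\mu}$, $\boldsymbol{k}=\boldsymbol{e}$ already uses the full constant.
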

\begin{remark}\label{rem:unbounded_exponent}
	We emphasize that the regularity assumption \eqref{ineq:G-regularity-b(by) 2} on $Z$ is imposed only on the strict derivatives (${\boldsymbol{\nu}} \neq \boldsymbol{0}$). By not requiring a uniform bound on $Z$ itself, we allow the underlying field to be potentially unbounded. We point out that the relevant derivative bound \eqref{ineq:G-regularity-a(by)} for the diffusion coefficient $a = \exp(Z)$ still holds for all ${\boldsymbol{\nu}} \in \mathcal F$, including $\boldsymbol{0}$. This relaxation is practically significant, as it readily accommodates log-normal models where $Z$ is not  $L_\infty$-bounded. In Section \ref{sec: total error} below, we consider a more general situation where the exponent $Z({\boldsymbol{y}})$ is not necessarily restricted to an affine combination. Consequently, the classical log-normal model is naturally recovered as a special case.
\end{remark}
\begin{proof}
	For brevity, we omit the explicit dependence on $\boldsymbol{y}$ and write $\rho$ and ${\boldsymbol{\alpha}}$ instead of $\rho(\boldsymbol{y})$ and ${\boldsymbol{\alpha}}(\boldsymbol{y})$.	
	First, note that the assumption \eqref{ineq:G-regularity-b(by) 2} and the %
	{right inequality in}	\eqref{ineq: ffac_m} imply
	\begin{equation} \label{ineq:G-regularity-b(by) 1}
		\left\|{\partial^{\boldsymbol{\nu}} Z(\boldsymbol{\xi})}\right\|_{\infty}	
		\le 2\rho \, (2{\boldsymbol{\alpha}})^{{\boldsymbol{\nu}}} \, \phi_{\delta}({{\boldsymbol{\nu}}}), \quad \forall \boldsymbol{\xi} \in U, \; \forall {\boldsymbol{\nu}} \in \mathcal F \setminus \left\{\boldsymbol{0}\right\}.
	\end{equation}	
	We proceed to prove \eqref{ineq:G-regularity-a(by)} by induction on the multi-index $\boldsymbol{\nu}$. For $\boldsymbol{\nu} = \mathbf{0}$, the estimate holds trivially since $\left\|{\partial^{\boldsymbol{0}} a(\boldsymbol{\xi})}\right\|_{\infty, a(\boldsymbol{\xi})^{-1}} = 1$.
	
	Suppose that for any ${\boldsymbol{\nu}} \in \mathcal F$ and for any $\boldsymbol{\xi}$ in the neighborhood {$U_{\boldsymbol{y}}$} of $\boldsymbol{y}$, the inequality \eqref{ineq:G-regularity-a(by)} holds for all ${{\boldsymbol{m}}} \le {\boldsymbol{\nu}}$. Let ${{\boldsymbol{e}}} \in {\mathcal F}$ be a unit vector with $|{{\boldsymbol{e}}}|=1$. Applying the Leibniz rule to $\partial^{{\boldsymbol{\nu}}+{{\boldsymbol{e}}}} a = \partial^{{\boldsymbol{\nu}}} (a \partial^{{{\boldsymbol{e}}}} Z)$, we obtain
	\begin{equation} \label{partial^bs a=}
		\partial^{{\boldsymbol{\nu}}+{\boldsymbol{e}}} a(\boldsymbol{\xi})
		= \partial^{{\boldsymbol{\nu}} } \left(\partial^{\boldsymbol{e}} a(\boldsymbol{\xi})\right)
		= \partial^{{\boldsymbol{\nu}}} \left(a(\boldsymbol{\xi})\,\partial^{\boldsymbol{e}} Z(\boldsymbol{\xi})\right)
		= \sum_{\boldsymbol{0} \leq {\boldsymbol{m}} \le {\boldsymbol{\nu}}} \binom{{\boldsymbol{\nu}}}{{\boldsymbol{m}}}\partial^{{\boldsymbol{\nu}} +{\boldsymbol{e}} - {\boldsymbol{m}}} Z(\boldsymbol{\xi}) \, \partial^{{\boldsymbol{m}}} a(\boldsymbol{\xi}). 
	\end{equation}	
	Taking the norm and using the inductive hypothesis together with \eqref{ineq:G-regularity-b(by) 1}, it follows that
	\begin{equation} \label{norm{partial^bs a}{infty, a}}
		\begin{aligned}
			\left\|{\partial^{{\boldsymbol{\nu}}+{\boldsymbol{e}}} a(\boldsymbol{\xi})}\right\|_{\infty, a(\boldsymbol{\xi})^{-1}}
			&\le \sum_{\boldsymbol{0}\le{\boldsymbol{m}} \le {\boldsymbol{\nu}}} \binom{{\boldsymbol{\nu}}}{{\boldsymbol{m}}} 
			\left\|{\partial^{{\boldsymbol{\nu}} +{\boldsymbol{e}} - {\boldsymbol{m}}}Z(\boldsymbol{\xi})}\right\|_{\infty}\,\left\|{\partial^{\boldsymbol{m}} a(\boldsymbol{\xi})}\right\|_{\infty, a(\boldsymbol{\xi})^{-1}}\\[1ex]
			&\le \sum_{\boldsymbol{0}\le{\boldsymbol{m}} \le {\boldsymbol{\nu}}} \binom{{\boldsymbol{\nu}}}{{\boldsymbol{m}}}
			{2 \rho} \, (2{\boldsymbol{\alpha}})^{{\boldsymbol{\nu}} +{\boldsymbol{e}}-{\boldsymbol{m}}} \, \phi_{\delta}({{\boldsymbol{\nu}}+{\boldsymbol{e}}-{\boldsymbol{m}}}) \, 
			(4\rho{\boldsymbol{\alpha}})^{\boldsymbol{m}} \, \phi_{\delta}({{\boldsymbol{m}}})\\[1ex]
			&\le 2\rho\cdot (2\rho)^{{\left|\boldsymbol{\nu}\right|}} \, (2{\boldsymbol{\alpha}})^{{\boldsymbol{\nu}}+{\boldsymbol{e}}} 
			\sum_{\boldsymbol{0}\le{\boldsymbol{m}} \le {\boldsymbol{\nu}}} \binom{{\boldsymbol{\nu}}}{{\boldsymbol{m}}}
			\phi_{\delta}({{\boldsymbol{\nu}}+{\boldsymbol{e}}-{\boldsymbol{m}}}) \phi_{\delta}({{\boldsymbol{m}}}).
		\end{aligned}
	\end{equation}	
	Using the property of the Gevrey-type growth factor in \eqref{fall_fac_ineq}, the sum reduces to
	\begin{equation*} 
		\left\|{\partial^{{\boldsymbol{\nu}}+{\boldsymbol{e}}} a(\boldsymbol{\xi})}\right\|_{\infty, a(\boldsymbol{\xi})^{-1}}
		\le (4\rho{\boldsymbol{\alpha}})^{{\boldsymbol{\nu}}+{\boldsymbol{e}}} \phi_{\delta}({{\boldsymbol{\nu}}+{\boldsymbol{e}}}), \quad \forall \boldsymbol{\xi} \in {U_{\boldsymbol{y}}},
	\end{equation*}
	which confirms \eqref{ineq:G-regularity-a(by)}. Finally, applying estimate \eqref{ineq: ffac_m} to the right-hand side of %
	{the above inequality} completes the proof.
	\hfill
\end{proof}

As an immediate consequence of Lemma \ref{lemma:G-regularity-a(by)} and  Theorem \ref{thm:G-regularity-u(by)}, we obtain a parametric Gevrey regularity for the weak solution assuming a  parametric Gevrey regularity of $Z$.

\begin{corollary}\label{cor:G-regularity-lognormal}
	Let $D$ be a Lipschitz bounded domain in ${\mathbb R}^d$ {{}and $X\subseteq V$ a linear subspace.} Let {{}$u\in X$} be the parametric solution to {\eqref{ellip sub-prob}} with the diffusion coefficient given by $a({\boldsymbol{y}}):= \exp(Z({\boldsymbol{y}}))$. 
	Assume that for all $\boldsymbol{y} \in \mathbb{R}^{{\mathbb{N}}}$ there exist an open neighborhood $U_{{\boldsymbol{y}}}$ of $\boldsymbol{y}$, a function $\rho(\boldsymbol{y})\ge 1$, and a sequence ${\boldsymbol{\beta}(\boldsymbol{y})}$ of positive functions such that 
	\begin{equation} \label{cond:G-regularity-Z-lognormal}
		\left\|{\partial^{\boldsymbol{\nu}} Z(\boldsymbol{\xi})}\right\|_{\infty}	
		\ \le \ \rho(\boldsymbol{y}) {\boldsymbol{\alpha}}(\boldsymbol{y})^{{\boldsymbol{\nu}}} \left({\left|{\boldsymbol{\nu}}\right|}!\right)^{\delta}, \quad \forall \boldsymbol{\xi} \in U_{{\boldsymbol{y}}},
		\ \  \forall {\boldsymbol{\nu}} \in \mathcal F\setminus \{\boldsymbol{0}\}.
	\end{equation}	
Then for the parametric solution $u$ to \eqref{ellip sub-prob}, it holds  for all ${\boldsymbol{\nu}} \in \mathcal F$ that
	\begin{equation} \label{ineq:G-regularity-u-lognormal}
		\begin{aligned}
		{\boldsymbol{y}}  \in \mathbb{R}^{{\mathbb{N}}} ~~ \exists U_{{\boldsymbol{y}}}: ~~ \forall \boldsymbol{\xi} \in {U_{{\boldsymbol{y}}}}~~
			\left\|{\partial^{\boldsymbol{\nu}} u(\boldsymbol{\xi})}\right\|_{V}
			\le \
			\left(12 \rho(\boldsymbol{y}) {\boldsymbol{\alpha}}(\boldsymbol{y})\right)^{{\boldsymbol{\nu}}}\, (|{\boldsymbol{\nu}}|!)^\delta \left\|{{f}}\right\|_{V'} 
			\exp\left(\left\|{Z(\boldsymbol{\xi})}\right\|_{\infty}\right).
		\end{aligned}
	\end{equation}	
\end{corollary}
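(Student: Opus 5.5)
The plan is to combine Lemma~\ref{lemma:G-regularity-a(by)} with the energy-norm induction from the proof of Theorem~\ref{thm:G-regularity-u(by)}. A direct black-box use of the theorem does not give the stated constant. Inserting the lemma's bound with $\kappa=1$ and ${\boldsymbol{\beta}}=4\rho{\boldsymbol{\alpha}}$ would only yield $(48\rho{\boldsymbol{\alpha}})^{\boldsymbol{\nu}}$. The factor $4$ is lost because the theorem first converts $(|{\boldsymbol{\nu}}|!)^\delta$ back into $2\kappa(2{\boldsymbol{\beta}})^{\boldsymbol{\nu}}\phi_\delta({\boldsymbol{\nu}})$. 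So I will instead use the sharper first line of \eqref{ineq:G-regularity-a(by)}, which is already expressed through $\phi_\delta$, and rerun the induction.

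\textbf{Step 1.} Fix ${\boldsymbol{y}}$ and take $U_{{\boldsymbol{y}}}$, $\rho=\rho({\boldsymbol{y}})$, ${\boldsymbol{\alpha}}={\boldsymbol{\alpha}}({\boldsymbol{y}})$ from \eqref{cond:G-regularity-Z-lognormal}. Lemma~\ref{lemma:G-regularity-a(by)} then gives, for all ${\boldsymbol{\nu}}\in\mathcal F$ and all ${\boldsymbol{\xi}}\in U_{{\boldsymbol{y}}}$,
\[
\|\partial^{\boldsymbol{\nu}} a({\boldsymbol{\xi}})\|_{\infty,a({\boldsymbol{\xi}})^{-1}}\le (4\rho{\boldsymbol{\alpha}})^{\boldsymbol{\nu}}\phi_\delta({\boldsymbol{\nu}}).
\]

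\textbf{Step 2.} I will prove by induction on ${\boldsymbol{\nu}}$ that
\[
\|\partial^{\boldsymbol{\nu}} u({\boldsymbol{\xi}})\|_{H^1_{a({\boldsymbol{\xi}})}}\le (12\rho{\boldsymbol{\alpha}})^{\boldsymbol{\nu}}\phi_\delta({\boldsymbol{\nu}})\,\|f\|_{V'}\,\|a({\boldsymbol{\xi}})^{-1}\|_\infty^{1/2}.
\]
The base case ${\boldsymbol{\nu}}=\boldsymbol{0}$ is exactly \eqref{weak-form2}. For the inductive step I start from the recursion \eqref{norm{partial...}}, which holds verbatim for the subproblem on $X$ since $\partial^{\boldsymbol{m}} u\in X$. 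Inserting Step 1 and the inductive hypothesis, each summand is bounded by
\[
\binom{{\boldsymbol{\nu}}}{{\boldsymbol{m}}}(4\rho{\boldsymbol{\alpha}})^{{\boldsymbol{\nu}}-{\boldsymbol{m}}}(12\rho{\boldsymbol{\alpha}})^{{\boldsymbol{m}}}\phi_\delta({\boldsymbol{\nu}}-{\boldsymbol{m}})\phi_\delta({\boldsymbol{m}})
\]
times the common factor $\|f\|_{V'}\|a({\boldsymbol{\xi}})^{-1}\|_\infty^{1/2}$. Writing $(12\rho{\boldsymbol{\alpha}})^{\boldsymbol{m}}=(4\rho{\boldsymbol{\alpha}})^{\boldsymbol{m}}3^{|{\boldsymbol{m}}|}$ and using $|{\boldsymbol{m}}|\le|{\boldsymbol{\nu}}|-1$ for ${\boldsymbol{m}}<{\boldsymbol{\nu}}$, the whole sum is at most
\[
(4\rho{\boldsymbol{\alpha}})^{\boldsymbol{\nu}}\,3^{|{\boldsymbol{\nu}}|-1}\sum_{\boldsymbol{0}\le{\boldsymbol{m}}<{\boldsymbol{\nu}}}\binom{{\boldsymbol{\nu}}}{{\boldsymbol{m}}}\phi_\delta({\boldsymbol{\nu}}-{\boldsymbol{m}})\phi_\delta({\boldsymbol{m}}).
\]
By \eqref{frac{1}{4}...} the remaining sum is at most $3\phi_\delta({\boldsymbol{\nu}})$, so the total is at most $(12\rho{\boldsymbol{\alpha}})^{\boldsymbol{\nu}}\phi_\delta({\boldsymbol{\nu}})$, which closes the induction. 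This constant bookkeeping, showing that the base $12$ absorbs the convolution constant $3$, is the only delicate point.

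\textbf{Step 3.} Convert to the $V$-norm via
\[
\|\partial^{\boldsymbol{\nu}} u({\boldsymbol{\xi}})\|_V\le\|a({\boldsymbol{\xi}})^{-1}\|_\infty^{1/2}\,\|\partial^{\boldsymbol{\nu}} u({\boldsymbol{\xi}})\|_{H^1_{a({\boldsymbol{\xi}})}},
\]
which gives the factor $\|a({\boldsymbol{\xi}})^{-1}\|_\infty$. Since $a({\boldsymbol{\xi}})^{-1}=\exp(-Z({\boldsymbol{\xi}}))$, we have $\|a({\boldsymbol{\xi}})^{-1}\|_\infty\le\exp(\|Z({\boldsymbol{\xi}})\|_\infty)$. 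Finally, $\phi_\delta({\boldsymbol{\nu}})\le(|{\boldsymbol{\nu}}|!)^\delta$ by \eqref{ineq: ffac_m}, which yields \eqref{ineq:G-regularity-u-lognormal}.
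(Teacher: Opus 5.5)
Your proposal is correct and follows the paper's own proof. The paper also sets $\kappa=1$, ${\boldsymbol{\beta}}=4\rho{\boldsymbol{\alpha}}$, substitutes the sharper $\phi_\delta$-form bound from Lemma~\ref{lemma:G-regularity-a(by)} directly into the recursion \eqref{norm{partial...}} to get the constant $12$ (rather than the $48$ that quoting Theorem~\ref{thm:G-regularity-u(by)} as a black box would give), and finishes with $\|a(\boldsymbol{\xi})^{-1}\|_\infty\le\exp(\|Z(\boldsymbol{\xi})\|_\infty)$.
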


\begin{proof}
	The proof follows immediately by combining Lemma \ref{lemma:G-regularity-a(by)} and Theorem \ref{thm:G-regularity-u(by)}. By setting $\kappa(\boldsymbol{y}):=1$ and ${\boldsymbol{\beta}}({\boldsymbol{y}}) := 4\rho(\boldsymbol{y}) {\boldsymbol{\alpha}}(\boldsymbol{y})$ in \eqref{cond:G-regularity-a(by)-diff-coeff}, the condition on the diffusion coefficient $a(\boldsymbol{\xi})$ is satisfied 
	\begin{align*}
		\left\|{\partial^{\boldsymbol{\nu}} a(\boldsymbol{\xi})}\right\|_{\infty,a(\boldsymbol{\xi})^{-1}}
		\le  {{\boldsymbol{\beta}}({\boldsymbol{y}})^{\boldsymbol{\nu}}}\,\phi_{\delta}({{\boldsymbol{\nu}}}), \quad \forall \boldsymbol{\xi} \in U_{{\boldsymbol{y}}}, \; \forall {\boldsymbol{\nu}} \in \mathcal F.
	\end{align*}
	Substituting this bounds into \eqref{norm{partial...}} yields the constant $12$, and observing that $\left\|{a(\boldsymbol{\xi})^{-1}}\right\|_{\infty} \le \exp(\left\|{Z(\boldsymbol{\xi})}\right\|_{\infty})$ completes the proof.
	\hfill
\end{proof}

\begin{remark}\label{rem:affine_lognormal}
	We emphasize that Corollary \ref{cor:G-regularity-lognormal} naturally encompasses the standard log-normal diffusion model widely studied in the literature. Specifically, when the exponent $Z$ is defined as an affine combination of the parameters, i.e., 
	$$
	Z({\boldsymbol{x}}, {\boldsymbol{y}}) = \sum_{j \in {\mathbb N}} y_j \psi_j({\boldsymbol{x}}).
	$$
	 In this context, the regularity condition \eqref{cond:G-regularity-Z-lognormal} is trivially satisfied, and our estimate \eqref{ineq:G-regularity-u-lognormal} seamlessly recovers the established regularity bounds for the affine log-normal case (see, e.g., \cite{Graham15}).
\end{remark}

\begin{remark}
	It is worth noting the flexibility of our proving technique: the Gevrey regularity estimates for $a$ and $u$ do not strictly require local bounds across the entire parameter space ${\mathbb R}^{{\mathbb{N}}}$. Indeed, if $Z$ satisfies the pointwise Gevrey condition 
	\begin{equation*} 
		\left\|{\partial^{\boldsymbol{\nu}} Z(\boldsymbol{y})}\right\|_{\infty} \le \rho(\boldsymbol{y}) \boldsymbol{\beta}(\boldsymbol{y})^{{\boldsymbol{\nu}}} (|\boldsymbol{\nu}|!)^{\delta}
	\end{equation*}
	for all $\boldsymbol{\nu} \neq \boldsymbol{0}$ and $\boldsymbol{y} \in \mathbb{R}^{{\mathbb{N}}}$, our analysis can be applied to yield pointwise regularity results.
	\begin{equation*} 
		\begin{aligned}
			\left\|{\partial^{\boldsymbol{\nu}} u(\boldsymbol{y})}\right\|_{V}
			\le 
			\left(12 \rho({\boldsymbol{y}}) {\boldsymbol{\alpha}}({\boldsymbol{y}})\right)^{{\boldsymbol{\nu}}}\,(|{\boldsymbol{\nu}}|!)^\delta \left\|{f}\right\|_{V'} \exp\left(\left\|{Z(\boldsymbol{y})}\right\|_{\infty}\right) \ \forall  {\boldsymbol{y}}  \in {{\mathbb R}^{\mathbb{N}}}, \ \forall {\boldsymbol{\nu}} \in \mathcal F.
		\end{aligned}
	\end{equation*}	
\end{remark}

\section{Dimension truncation error}\label{sec: truncation}

For a domain $Y\subseteq {\mathbb{R}}$, let $Y^{\mathbb{N}}$  denote the {countable} infinite Cartesian  product 
$Y^{\mathbb{N}}:= \prod_{j \in {\mathbb N}} Y_j$ where  $Y_j := Y$ for each $j\in{\mathbb{N}}$. Recalling the construction of the infinite tensor product of probability measures (see, e.g., \cite[pp. 429--435]{HewittStromberg75}), we define the probability measure ${{\boldsymbol{\mu}}}$ on $Y^{\mathbb{N}}$ as the infinite tensor product:
	\begin{equation} \label{bmu}
		{\boldsymbol{\mu}}({\boldsymbol{y}}) 
		:= \ 
		\bigotimes_{j \in {\mathbb N}} \mu_j(y_j) , \quad {\boldsymbol{y}} = (y_j)_{j \in {\mathbb N}} \in Y^{\mathbb{N}}.
	\end{equation}
Here, $\{\mu_j\}_{j \in {\mathbb{N}}}$  is a sequence of probability measures $Y$. The $\sigma$-algebra associated with ${{\boldsymbol{\mu}}}$ is the product $\sigma$-algebra generated by the cylinder sets of the form $A := \prod_{j \in {\mathbb{N}}} A_j$, where each $A_j \subseteq Y$ is $\mu_j$-measurable and $A_j = Y$ for all but finitely many indices $j$. For such cylinder sets, the measure is given by the product ${{\boldsymbol{\mu}}}(A) = \prod_{j \in {\mathbb{N}}} \mu_j(A_j)$.
Notably, this framework encompasses the construction of the infinite-dimensional standard Gaussian measure.

Let $F: Y^{\mathbb{N}} \to {\mathbb{R}}$ be {a ${\boldsymbol{\mu}}$-integrable function,} and consider the infinite-dimensional integral
	\begin{align}\label{approx integral}
		I(F) =	
		\int_{{Y}^{\mathbb{N}}} F({\boldsymbol{y}}) \, {\boldsymbol{\mu}}({\rm d} {\boldsymbol{y}})
	\end{align}
	{We construct} a fully discrete approximation using QMC rules, specifically for integrands $F$ exhibiting  a pointwise Gevrey-$\delta$ regularity. Since {QMC rules are not applicable to infinite-dimensional functions, in this section, we preliminarily} approximate $I(F)$ by the dimension-truncated integral:
	\begin{align*}
		I_s(F) {:=
			\int_{{Y}^{\mathbb N}}  F({\boldsymbol{y}}_s){\boldsymbol{\mu}}({\rm d} {\boldsymbol{y}})}
		=	
		\int_{{Y}^s}  F({\boldsymbol{y}}_s){\boldsymbol{\mu}}_s({\rm d} {\boldsymbol{y}}_s),
	\end{align*}
where $s$ is chosen sufficiently large such that $I(F) = \lim_{s \to \infty} I_s(F)$.

In this context, let {$\{1:{s}\}=\left\{1,\dots,s\right\}$} and let ${{\boldsymbol{y}}}_s := (y_1, \dots, y_s, 0, \dots)$ denote the $s$-dimensional truncation of ${{\boldsymbol{y}}} \in {{\mathbb R}^{\mathbb{N}}}$. This truncated vector is identified as an element of ${\mathbb{R}}^s$, and we denote by ${{\boldsymbol{\mu}}}_s({{\boldsymbol{y}}}_s) := \bigotimes_{j=1}^s \mu_j(y_j)$ the corresponding product measure. 

The main  result on dimension truncation error for functions having a pointwise Gevrey $\delta$-regularity is the following.

\begin{theorem}[Truncation {dimension} error bound] 
	\label{thm:general trunc theorem_revised}
		Assume that  for $s \in {\mathbb N}$,
		 the first-order moment of the measures $\mu_j$, $j \ge s+1$ has vanished, i.e.,
		\begin{align}\label{mean free}
			\int_{Y} { y }_j \, \mu_j({\rm d} { y }_j)
			=
			0 \quad \text{for all } j \geq s+1.
		\end{align}
	{Let $F: Y^{\mathbb{N}} \to {\mathbb{R}}$ be a strongly ${\boldsymbol{\mu}}$-integrable function.}	Assume that there exist an integer $k\geq 2$, constants $\delta\geq 1$, {$p \in (0,1)$} and
	\begin{itemize}
		\item a  function $C_F({ {\boldsymbol{y}} }) = \prod_{j {\in {\mathbb N}}} C_j({ y }_j)$, where $C_j({ y }_j) > 0$ for all ${ y }_j \in Y$,
		\item {a sequence of positive functions} ${\boldsymbol{\beta}}({ {\boldsymbol{y}} }) = (\beta_j({ y }_j))_{j\in {\mathbb{N}}}$,
		\item {a sequence of positive numbers} $\boldsymbol{\mathsf{b}} = ({\mathsf{b}}_j)_{j \in {\mathbb{N}}}$,
	\end{itemize}
	such that  there hold the following conditions on the derivatives of $F$  for all ${ {\boldsymbol{y}} } \in Y^{\mathbb{N}}$ and all and multi-indices $\boldsymbol{\nu} \in \mathcal F$:
	\begin{enumerate}[label={\rm (\roman*)}]
		\item {$s$-Tailed pointwise Gevrey $\delta$-regularity}: {For every $\boldsymbol{\nu} \in \mathcal F_{s}^k$,
		\begin{align}\label{QMC lemma assumption_deriv}
			{\left|\partial^{\boldsymbol{\nu}} F ({ {\boldsymbol{y}} }) \right|}
			\leq
			C_F({ {\boldsymbol{y}} }) \,\boldsymbol{\beta}({ {\boldsymbol{y}} })^{\boldsymbol{\nu}}  \, \boldsymbol{\mathsf{b}}^{\boldsymbol{\nu}} ({\left|\boldsymbol{\nu}\right|}!)^{\delta},
		\end{align}
		where $\mathcal F_{s}^k := \left\{\boldsymbol{\nu}\in\mathcal F: \sum_{j\geq s+1}\nu_j\leq k+1\right\}$.}
		\item {Integrability for $C_F$:} 
		\begin{align}\label{Integrability C_F}
		\widetilde C_F:= \prod_{j{\in {\mathbb N}}}
		\max \left\{1,\int_Y \sup_{t\in[0,1]} C_j(t { y }_j) \mu_j({{\rm d}}{ y }_j)\right\} < \infty.
	\end{align}
		\item Finite weighted mixed moments: 
		\begin{align}\label{weighted moments}
		M_k:=\sup_{j\in{\mathbb{N}}}
		\max_{1\leq m\leq k+1}  \sup_{t\in[0,1]} \int_Y C_j(t { y }_j) \beta_j(t { y }_j)^m {\left|{ y }_j\right|}^m\mu_j({{\rm d}}{ y }_j)<\infty.
	\end{align}
		\item {Sparsity of  $\boldsymbol{\mathsf{b}}$}: The $s$-tail sequence of $\boldsymbol{\mathsf{b}}$, denoted by $\overline{\boldsymbol{\mathsf{b}}}_s:=\left\{\overline{{\mathsf{b}}}_j\right\}_{j\in{\mathbb{N}}}=\left\{{{\mathsf{b}}}_{s+j}\right\}_{j\in{\mathbb{N}}}$, {is $p$-summable,} i.e.,  {$\left\|{\overline{\boldsymbol{\mathsf{b}}}_s}\right\|_{\ell^p} <\infty$}, and $\left\|{\overline{\boldsymbol{\mathsf{b}}}_s}\right\|_{\ell^1}{\leq \tfrac{1}{2}\, 
			 M_k^{-1}}$.
	\end{enumerate}
	Then, the truncation {dimension error of the approximation of  the  integral $I(F)$ by {$I_s(F)$}} is bounded by
	\begin{align}\label{equ: truncation error_bound}
		{\left|I(F) - I_s(F)\right|}
		=
		{\left|\int_{Y^{\mathbb{N}}} (F({ {\boldsymbol{y}} })-F({ {\boldsymbol{y}} }_s){\boldsymbol{\mu}}({{\rm d}}{\boldsymbol{y}})\right|}
		\leq
		C_{\delta}\, s^{{-\tau}},
	\end{align}
	where the constant $C_{\delta}$ and $\tau$ are defined as
	{
	\begin{equation} 	\label{gamma def}
		\tau :=\min\left\{\frac{2}{p}-1,(k+1)\left(\frac{1}{p}-1\right)\right\},
	\end{equation}
	and
	\begin{equation} \label{C_delta def}
		C_{\delta}:=
	\widetilde C_F 
	((k+1)!)^{\delta-1}
	\max\left\{	8{M_k^2 	\left\|{\boldsymbol{\mathsf{b}}}\right\|_{\ell^p}^2},M_k^{k+1} \left\|{\boldsymbol{\mathsf{b}}}\right\|_{\ell^p}^{k+1}\right\}.
	\end{equation}
}
\end{theorem}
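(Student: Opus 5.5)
The plan is to Taylor-expand $F$ in the tail variables $\overline{{\boldsymbol{y}}}:=(y_{s+1},y_{s+2},\dots)$ around $\overline{{\boldsymbol{y}}}=\boldsymbol{0}$, keeping ${\boldsymbol{y}}_s$ fixed. The expansion is to order $k$ with an integral remainder of order $k+1$. Concretely, set $g(t):=F({\boldsymbol{y}}_s,t\overline{{\boldsymbol{y}}})$ for $t\in[0,1]$, so that
\begin{equation*}
F({\boldsymbol{y}})-F({\boldsymbol{y}}_s)=\sum_{\ell=1}^{k}\frac{g^{(\ell)}(0)}{\ell!}+\frac{1}{k!}\int_0^1(1-t)^k g^{(k+1)}(t)\,{\rm d}t .
\end{equation*}
By the multinomial theorem \eqref{eq:multinomial},
\begin{equation*}
\frac{g^{(\ell)}(t)}{\ell!}=\sum_{|\boldsymbol{\nu}|=\ell,\ \operatorname{supp}\boldsymbol{\nu}\subset\{s+1,\dots\}}\frac{1}{\boldsymbol{\nu}!}\,\partial^{\boldsymbol{\nu}}F({\boldsymbol{y}}_s,t\overline{{\boldsymbol{y}}})\,\overline{{\boldsymbol{y}}}^{\boldsymbol{\nu}} .
\end{equation*}
All of these multi-indices lie in $\mathcal F^k_s$, so assumption \eqref{QMC lemma assumption_deriv} applies throughout (to be careful, at the points $({\boldsymbol{y}}_s,t\overline{{\boldsymbol{y}}})$). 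One technicality is passing from finitely many tail variables to infinitely many. I would first work with $F({\boldsymbol{y}}_{s'})$ for $s'>s$ and let $s'\to\infty$ by dominated convergence using integrability of $F$, or alternatively accept the expansion formally, as the paper appears to do.

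\textbf{Terms of order $\ell\le k$.} Integrate against ${\boldsymbol{\mu}}$. Consider a multi-index $\boldsymbol{\nu}$ with some $\nu_j=1$, $j>s$. I do not want to use the crude bound for these terms, because they should be small (of order $\overline{\boldsymbol{\mathsf{b}}}^{2}$ rather than $\overline{\boldsymbol{\mathsf{b}}}$). Exact vanishing is not available: $\partial^{\boldsymbol{\nu}}F({\boldsymbol{y}}_s,\boldsymbol{0})$ does not depend on $y_j$, so $\int y_j\,\mu_j({\rm d}y_j)=0$ kills the term evaluated at $\overline{{\boldsymbol{y}}}=0$, but for $\ell<k$ we evaluate at $t=0$ only. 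The cleaner route is to expand only to first order exactly and bound everything else by a remainder. Write
\begin{equation*}
F({\boldsymbol{y}})-F({\boldsymbol{y}}_s)=g'(0)+\int_0^1(1-t)g''(t)\,{\rm d}t .
\end{equation*}
The linear term $g'(0)=\sum_{j>s}y_j\,\partial_jF({\boldsymbol{y}}_s)$ integrates to zero by \eqref{mean free} and the product structure of ${\boldsymbol{\mu}}$.

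\textbf{Second-order remainder.} Bound $|g''(t)|$ by \eqref{QMC lemma assumption_deriv}:
\begin{equation*}
|g''(t)|\le \sum_{|\boldsymbol{\nu}|=2}\frac{2!}{\boldsymbol{\nu}!}\,C_F\,\boldsymbol{\beta}^{\boldsymbol{\nu}}\,\boldsymbol{\mathsf{b}}^{\boldsymbol{\nu}}\,(2!)^\delta\,|\overline{{\boldsymbol{y}}}^{\boldsymbol{\nu}}|,
\end{equation*}
where $C_F$ and $\boldsymbol{\beta}$ are evaluated at $({\boldsymbol{y}}_s,t\overline{{\boldsymbol{y}}})$. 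Integrating factorwise, the product structure $C_F=\prod_j C_j$ gives $\widetilde C_F$ from \eqref{Integrability C_F} for the coordinates outside $\operatorname{supp}\boldsymbol{\nu}$, and at most $M_k$ per active coordinate by \eqref{weighted moments}. This yields a bound of order $\widetilde C_F M_k^2(\sum_{j>s}{\mathsf{b}}_j)^2$. The Stechkin-type estimate $\sum_{j>s}{\mathsf{b}}_j\le s^{1-1/p}\|\boldsymbol{\mathsf{b}}\|_{\ell^p}$ (valid for ${\mathsf{b}}$ decreasing; otherwise I would first reorder) then gives only the rate $s^{-2(1/p-1)}$. This is weaker than $s^{-(2/p-1)}$ whenever $p>1/2$.

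\textbf{Main obstacle.} To reach $\tau=\min\{2/p-1,(k+1)(1/p-1)\}$ the bound must be split. The diagonal terms ($\nu=2e_j$) sum to $\sum_{j>s}{\mathsf{b}}_j^2\lesssim s^{1-2/p}$, which gives the first entry of the minimum. The mixed terms $y_iy_j$ with $i\neq j$ should vanish up to higher order, using the mean-zero property in each variable separately. I would implement this by expanding to order $k$ and grouping the multi-indices $\boldsymbol{\nu}$ with $|\boldsymbol{\nu}|\le k$. If some $\nu_j=1$, I re-expand in $y_j$ alone to first order, so the zeroth term vanishes by \eqref{mean free}. Then only multi-indices with all $\nu_j\neq 1$, i.e.\ $\nu_j\ge2$, survive, plus the remainders of total order $k+1$. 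The surviving sums are controlled via the generating function identity \eqref{eq:geometric series} together with the geometric series $\sum_{\boldsymbol{\nu}}\frac{|\boldsymbol{\nu}|!}{\boldsymbol{\nu}!}(M_k\overline{\boldsymbol{\mathsf{b}}})^{\boldsymbol{\nu}}\le(1-M_k\|\overline{\boldsymbol{\mathsf{b}}}_s\|_{\ell^1})^{-1}\le2$, which is where condition (iv) enters and where the factor $8=2^3$ in \eqref{C_delta def} comes from. The Gevrey factor $(|\boldsymbol{\nu}|!)^\delta/|\boldsymbol{\nu}|!$ is at most $((k+1)!)^{\delta-1}$ for $|\boldsymbol{\nu}|\le k+1$. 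The order-$(k+1)$ remainder is bounded by $M_k^{k+1}(\sum_{j>s}{\mathsf{b}}_j)^{k+1}\lesssim M_k^{k+1}\|\boldsymbol{\mathsf{b}}\|_{\ell^p}^{k+1}s^{-(k+1)(1/p-1)}$, which gives the second entry of the minimum. The delicate bookkeeping is to show that the $\nu_j\ge2$ terms sum to $O(\sum_{j>s}{\mathsf{b}}_j^2)$ using \eqref{eq:geometric series}. I would try this first in the form
\begin{equation*}
\sum_{\boldsymbol{\nu}}\frac{|\boldsymbol{\nu}|!}{\boldsymbol{\nu}!}\boldsymbol{\mathsf{b}}^{\boldsymbol{\nu}}\mathbf 1_{\{\exists i:\nu_i\ge2\}}\le\sum_i\sum_{\boldsymbol{\nu}}\frac{|\boldsymbol{\nu}|!}{\boldsymbol{\nu}!}\frac{\nu_i(\nu_i-1)}{2}\boldsymbol{\mathsf{b}}^{\boldsymbol{\nu}}=\sum_i\frac{{\mathsf{b}}_i^2}{(1-\sum_j{\mathsf{b}}_j)^3},
\end{equation*}
applied to the scaled tail sequence $M_k\overline{\boldsymbol{\mathsf{b}}}_s$.
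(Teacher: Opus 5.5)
Your final plan in the ``Main obstacle'' paragraph is exactly the paper's proof. It Taylor-expands in the tail variables at ${\boldsymbol{y}}_s$ to order $k$ with an order-$(k+1)$ integral remainder, discards the terms of order $\le k$ having some $\nu_j=1$, and bounds the survivors via $\tfrac12\sum_i\nu_i(\nu_i-1)\ge 1$, \eqref{eq:geometric series} and $(1-M_k\|\overline{\boldsymbol{\mathsf{b}}}_s\|_{\ell^1})^{-3}\le 8$; the remainder is handled by the multinomial theorem and Stechkin's lemma. Your claim that ``exact vanishing is not available'' is wrong, so the second-order detour and the ``re-expansion in $y_j$'' are unnecessary: every Taylor coefficient of order $\le k$ is evaluated at ${\boldsymbol{y}}_s$ and does not depend on the tail variables, so Fubini and \eqref{mean free} kill each such term with some $\nu_j=1$ exactly, and only the remainder (the paper's $\mathfrak{A}_2$, summed over all $|\boldsymbol{\nu}|=k+1$) keeps those indices.
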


\begin{remark}\label{rem:nonzero_mean}
	We point out that the zero-mean assumption \eqref{mean free} is made primarily for notational convenience and can be readily relaxed. If the expected values $c_j := \int_Y y_j \mu_j({{\rm d}}{ y }_j)$ are non-zero for some $j\in {\mathbb{N}}$, 
	{Theorem~\ref{thm:general trunc theorem_revised}} remains fully applicable via a standard coordinate shift. Specifically, by defining the mean sequence $\boldsymbol{c} := (c_j)_{j\in{\mathbb{N}}}$ and the centered variables $\widetilde{{\boldsymbol{y}}} := {\boldsymbol{y}} - \boldsymbol{c}$, one simply applies the theorem to the translated function $\widetilde{F}(\widetilde{{\boldsymbol{y}}}) := F(\widetilde{{\boldsymbol{y}}} + \boldsymbol{c})$.
\end{remark}

\begin{remark}\label{rem:sequence_factorization}
	We note that in the assumptions of this theorem, the {$s$-tailed Gevrey $\delta$-regularity} \eqref{QMC lemma assumption_deriv} is formulated by factorizing the scaling sequence into two separated components: a parameter-dependent sequence ${\boldsymbol{\beta}}(\boldsymbol{y})$ and a parameter-independent magnitude sequence $\boldsymbol{\mathsf{b}}$. While it is theoretically possible to absorb both components into a {single sequence 
	as in Definition~\ref{def:Gevrey-Class-Pointwise},} such a {separation} would necessitate imposing an explicit decay rate directly on the constants $M_k$. Therefore, this decoupling is deliberately retained to streamline the subsequent analysis.
\end{remark}

\begin{remark}\label{rem:recover result}
		It is worth emphasizing that our result seamlessly covers Theorem 4.1 of \cite{GuthKaarnioja2024} in the Gevrey regularity setting (i.e., when $C_F$ and ${\boldsymbol{\beta}}$ in \eqref{QMC lemma assumption_deriv} are independent of ${\boldsymbol{y}}$). While our proof  extends their arguments, the key difference occurs in the last step: by utilizing the geometric series \eqref{eq:geometric series}, we obtain a sharper constant $C_\delta$ compared to \cite{GuthKaarnioja2024}.
\end{remark}

{
\begin{proof} 
	{Let us fix an arbitrary ${ {\boldsymbol{y}} } \in Y^{\mathbb{N}}$.}
	Applying Taylor's formula {at} the truncated point ${ {\boldsymbol{y}} }_s$, we obtain
	\begin{align*}
		F({ {\boldsymbol{y}} })
		&=
		F({ {\boldsymbol{y}} }_s)
		+
		\sum_{i\geq s+1}
		\partial^{{\boldsymbol{e}}_i} F({ {\boldsymbol{y}} }_s)\, { y }_i
		+
		\sum_{j=2}^{k}
		\sum_{{\left|\boldsymbol{\nu}\right|}=j \atop \boldsymbol{\nu}_s=\boldsymbol{0}}
		\frac{1}{\boldsymbol{\nu}!}
		\partial^{\boldsymbol{\nu}} F({ {\boldsymbol{y}} }_s)\, { {\boldsymbol{y}} }^{\boldsymbol{\nu}}\\
		&+
		\sum_{{\left|\boldsymbol{\nu}\right|}=k+1 \atop \boldsymbol{\nu}_s=\boldsymbol{0}}
		\frac{k+1}{\boldsymbol{\nu}!}
		{ {\boldsymbol{y}} }^{\boldsymbol{\nu}}
		\int_0^1(1-{t})^k  \partial^{\boldsymbol{\nu}}  F ({t}{ {\boldsymbol{y}} }+(1-{t}) { {\boldsymbol{y}} }_s)\,{{\rm d}}{t} 
		.
	\end{align*}
 Integrating both sides with respect to ${ {\boldsymbol{y}} }$ over $Y^{\mathbb{N}}$, the difference $I(F) - I_s(F)$ becomes
	\begin{align}
		I(F)-I_s(F)
		&=
		\sum_{j\geq s+1}
		\int_{Y^{\mathbb{N}}} 
		\partial^{{\boldsymbol{e}}_j} F({ {\boldsymbol{y}} }_s)\, { y }_j \, {\boldsymbol{\mu}}({{\rm d}} { {\boldsymbol{y}} })
		+
	\sum_{j=2}^{k}
	\sum_{{\left|\boldsymbol{\nu}\right|}=j \atop \boldsymbol{\nu}_s=\boldsymbol{0}}
	\frac{1}{\boldsymbol{\nu}!}
		\int_{Y^{\mathbb{N}}} 
	\partial^{\boldsymbol{\nu}} F({ {\boldsymbol{y}} }_s)\, { {\boldsymbol{y}} }^{\boldsymbol{\nu}}  {\boldsymbol{\mu}}({{\rm d}} { {\boldsymbol{y}} })\notag\\
	&+
	\sum_{{\left|\boldsymbol{\nu}\right|}=k+1 \atop \boldsymbol{\nu}_s=\boldsymbol{0}}
	\frac{k+1}{\boldsymbol{\nu}!}
		\int_{Y^{\mathbb{N}}} 
		\int_0^1(1-{t})^k  \partial^{\boldsymbol{\nu}}  F ({t}{ {\boldsymbol{y}} }+(1-{t}) { {\boldsymbol{y}} }_s)\,{{\rm d}}{t} \,
				{ {\boldsymbol{y}} }^{\boldsymbol{\nu}}
		{\boldsymbol{\mu}}({{\rm d}} { {\boldsymbol{y}} })
	.\label{trunc form}
	\end{align}
	By the condition \eqref{mean free} of the theorem, 
	 ${\int_{Y} { y }_i \mu_i( {\rm d} { y }_i)}=0$  for all $i \ge s+1$. Let us analyze the first sum on the right-hand side of \eqref{trunc form}. Using Fubini's theorem to separate the 
	 {$s$-dimensional  integral  with respect to the variable ${\boldsymbol{y}}_s$ and the one-dimensional integrals with respect to the variables  ${ y }_i$ for $j\ge s+1$, we have 	
	 $$
	 \int_{Y^{\mathbb{N}}} 
	\partial^{{\boldsymbol{e}}_j} F({ {\boldsymbol{y}} }_s)\, { y }_j \, {\boldsymbol{\mu}}({{\rm d}} { {\boldsymbol{y}} })
	=
	\int_{Y^s}\partial^{{\boldsymbol{e}}_j} F({ {\boldsymbol{y}} }_s){\boldsymbol{\mu}}_s({{\rm d}} { {\boldsymbol{y}}_s })
	\prod_{i \ge s +1 \atop i \not= j}\int_Y   \mu_i( {\rm d} { y }_i)
	 \int_Y  y_j \, \mu_j( {\rm d} { y }_j)
	=
	0
	$$
}
	 for any $j \ge s+1$. Consequently, the {first sum in \eqref{trunc form}  is equal to zero.}
	
	Next, we examine the second sum in \eqref{trunc form}, which involves integrals of the form 
	\begin{align*}
		\int_{Y^{\mathbb{N}}} \partial^{\boldsymbol{\nu}} F({ {\boldsymbol{y}} }_s)\, { {\boldsymbol{y}} }^{\boldsymbol{\nu}}\, {\boldsymbol{\mu}}( {{\rm d}} { {\boldsymbol{y}} })
		&=
		\int_{Y^s} \partial^{\boldsymbol{\nu}} F({ {\boldsymbol{y}} }_s)\, {{\boldsymbol{\mu}}_s( {{\rm d}} { {\boldsymbol{y}}_s })
		\int_{Y^{{\mathbb{N}}\setminus{\{1:{s}\}}}} { {\boldsymbol{y}} }^{\boldsymbol{\nu}}
		{\boldsymbol{\mu}}(  {{\rm d}} { {\boldsymbol{y}} })}
	\end{align*}
	 for {all multi-indices ${{\boldsymbol{\nu}}}$ such that} $2\leq |{{\boldsymbol{\nu}}}| \leq k$ and $\nu_j=0$ for all $j\leq s$.
	If the multi-index $\boldsymbol{\nu}$ contains any component $\nu_j = 1$ for some $j \ge s+1$, the corresponding factor in the product becomes 	${\int_{Y} ({ y }_j)^{1} \, \mu_j({{\rm d}} { y }_j)}=0$ {due to the condition \eqref{mean free},} causing the entire term to vanish. Therefore, we now provide a rigorous bound for each term in the second sum {associated with a nonzero multi-index ${{\boldsymbol{\nu}}}$ satisfying the following conditions:}
	\begin{itemize}
		\item ${\left|{\boldsymbol{\nu}}\right|}\leq k$.
		\item $\nu_j=0$ for all $j\leq s$.
		\item $\nu_j\neq 1$ or $\nu_j\in\left\{0,2,3,\dots\right\}$ for all $j\geq s+1$.
	\end{itemize}
The product structure of ${{\boldsymbol{\mu}}}$ and {the assumption on $s$-tailed Gevrey regularity \eqref{QMC lemma assumption_deriv}} at ${\boldsymbol{y}}_s$ yield:\\
\resizebox{\textwidth}{!}{
	\begin{minipage}{\textwidth}
		\begin{center}
	\begin{align*}
	{\left|\int_{Y^{\mathbb{N}}} 
		\partial^{\boldsymbol{\nu}} F({ {\boldsymbol{y}} }_s)\, { {\boldsymbol{y}} }^{\boldsymbol{\nu}} \,  
		{{\boldsymbol{\mu}}( {\rm d} { {\boldsymbol{y}} })}\right|}
	&\leq
		\int_{Y^s} C_F({ {\boldsymbol{y}} }_s) {\boldsymbol{\beta}}({ {\boldsymbol{y}} }_s)^{\boldsymbol{\nu}} \boldsymbol{\mathsf{b}}^{\boldsymbol{\nu}}
({\left|\boldsymbol{\nu}\right|}!)^{\delta}\, {{\boldsymbol{\mu}}_s( {{\rm d}} { {\boldsymbol{y}}_s })}
\int_{Y^{{\mathbb{N}}\setminus{\{1:{s}\}}}} {\left|{ {\boldsymbol{y}} }^{\boldsymbol{\nu}}\right|}
{\boldsymbol{\mu}}(  {{\rm d}} { {\boldsymbol{y}} })\\
&=
\boldsymbol{\mathsf{b}}^{\boldsymbol{\nu}}
({\left|\boldsymbol{\nu}\right|}!)^{\delta}
\prod_{ j \notin \operatorname{supp}(\boldsymbol{\nu})}
\int_{Y}
C_j(\widetilde{ y }_{j})
\mu_j({{\rm d}}{ y }_j)
\prod_{i \in \operatorname{supp}(\boldsymbol{\nu})}
\int_{Y}
C_i(0)
\beta_i(0)^{\nu_i}
{{\left|{ y }_i\right|}^{\nu_i}
} \mu_i({{\rm d}}{ y }_i),
\end{align*}
	\end{center}
\end{minipage}
}
where $\widetilde { y }_j$ denotes the component of the sequence ${ {\boldsymbol{y}} }_s$, i.e. 
$$\widetilde { y }_j= \left\{\begin{matrix}
	{ y }_j &\text{ if }	1\leq j\leq s\\
	0 &\text{ otherwise}
\end{matrix}\right. .$$
Noting that $C_j(\widetilde { y }_j) \leq  \max\left\{ C_j(0), C_j({ y }_j)\right\}\leq \max_{t \in [0,1]} C_j(t \cdot { y }_j)$, {by using the assumptions \eqref{Integrability C_F} and \eqref{weighted moments} we can continue the above estimations  as follows:}
	\begin{align*}
	{\left|\int_{Y^{\mathbb{N}}} 
		\partial^{\boldsymbol{\nu}} F({ {\boldsymbol{y}} }_s)\, { {\boldsymbol{y}} }^{\boldsymbol{\nu}} \,  
		{{\boldsymbol{\mu}}( {\rm d} { {\boldsymbol{y}} })}\right|}
	&\leq
	\boldsymbol{\mathsf{b}}^{\boldsymbol{\nu}}
	({\left|\boldsymbol{\nu}\right|}!)^{\delta}
	\widetilde C_F
	\prod_{i \in \operatorname{supp}(\boldsymbol{\nu})}
	M_k
	\leq
		\widetilde C_F
		M_k^{{\left|\boldsymbol{\nu}\right|}}
			\boldsymbol{\mathsf{b}}^{\boldsymbol{\nu}}
		({\left|\boldsymbol{\nu}\right|}!)^{\delta},
\end{align*}
where we used $M_k \ge 1$ and $|\text{supp}(\boldsymbol{\nu})| \le |\boldsymbol{\nu}|$.

For the third sum on the right hand side of \eqref{trunc form}, the Taylor remainder where $|{\boldsymbol{\nu}}| = k+1$ and $\text{supp}({\boldsymbol{\nu}}) \subseteq \{s+1, s+2, \dots\}$, {by applying  the triangle inequality for integrals and the assumption (1) on $s$-tailed Gevrey regularity, we derive}
\begin{align*}
		&{\left|\int_{Y^{\mathbb{N}}} 
	\int_0^1(1- {t})^k  \partial^{\boldsymbol{\nu}}  F ( {t}{ {\boldsymbol{y}} }+(1- {t}) { {\boldsymbol{y}} }_s)\, {{\rm d}} {t} 
	\,
	{ {\boldsymbol{y}} }^{\boldsymbol{\nu}}
	\,
	{\boldsymbol{\mu}}(d { {\boldsymbol{y}} })\right|}\\
&\leq
\int_{Y^{\mathbb{N}}} 
\int_0^1 \left(1- {t}\right)^k  {\left|\partial^{\boldsymbol{\nu}}  F ( {t}{ {\boldsymbol{y}} }+(1- {t}) { {\boldsymbol{y}} }_s)\right|} \, {{\rm d}} {t} 
\,
{\left|{ {\boldsymbol{y}} }\right|}^{\boldsymbol{\nu}}
\,
{\boldsymbol{\mu}}( {{\rm d}} { {\boldsymbol{y}} })	\\
&
\leq
 \boldsymbol{\mathsf{b}}^{\boldsymbol{\nu}}
({\left|\boldsymbol{\nu}\right|}!)^{\delta}
\int_{Y^{\mathbb{N}}} 
\int_0^1
\left(1- {t}\right)^k 
C_F ( {t}{ {\boldsymbol{y}} }+(1- {t}) { {\boldsymbol{y}} }_s) {\boldsymbol{\beta}} ( {t}{ {\boldsymbol{y}} }+(1- {t}){ {\boldsymbol{y}} }_s)^{\boldsymbol{\nu}}  {{\rm d} t}\,
{\left|{ {\boldsymbol{y}} }\right|}^{\boldsymbol{\nu}}
\,
{\boldsymbol{\mu}}(d { {\boldsymbol{y}} }).
\end{align*}
Using the product structure of $C_F$ and $\boldsymbol{\mu}$, we bound {the  integral in the right-hand side} by:\\
\resizebox{\textwidth}{!}{
	\begin{minipage}{\textwidth}
\begin{align*}
		&
\int_0^1 	\left(1- {t}\right)^k  {{\rm d}} {t}
\prod_{i= 1}^s \int_Y
C_i({ y }_i) \mu_i({{\rm d}}{ y }_i)
\prod_{j\geq s+1 \atop j \notin \operatorname{supp}(\boldsymbol{\nu})} \int_Y
\max_{t\in [0,1]} C_j(t { y }_j) \mu_j({{\rm d}}{ y }_j)
\prod_{i \in \operatorname{supp}(\boldsymbol{\nu})} \int_Y
\max_{t\in [0,1]} C_i(t { y }_i) \beta_i(t { y }_i)^{\nu_i} {\left|{ y }_i\right|}^{\nu_i} \mu_i({{\rm d}}{ y }_i)
\\
&\qquad\qquad\qquad\leq
\int_0^1 	\left(1- {t}\right)^k  {{\rm d}} {t}
\prod_{j \notin \operatorname{supp}(\boldsymbol{\nu})} \int_Y
\max_{t\in [0,1]} C_j(t { y }_j)\mu_j({{\rm d}}{ y }_j)
\prod_{j \in \operatorname{supp}(\boldsymbol{\nu})}  M_k
\end{align*}
\end{minipage}
}
{Noting the definitions of $\widetilde {C}_F, M_k$ and  the identity $\int_0^1 (1-t)^k \,\mathrm{d}t = \frac{1}{k+1}$, we derive the bounds}
\begin{equation*}
	{\left|\int_{Y^{\mathbb{N}}} 
		\int_0^1(1- {t})^k  \partial^{\boldsymbol{\nu}}  F ( {t}{ {\boldsymbol{y}} }+(1- {t}) { {\boldsymbol{y}} }_s)\, {{\rm d}} {t} 
		\,
		{ {\boldsymbol{y}} }^{\boldsymbol{\nu}}
		\,
		{\boldsymbol{\mu}}(d { {\boldsymbol{y}} })\right|}
		\le \frac{1}{k+1} \widetilde {C}_F M_k^{|{\boldsymbol{\nu}}|} {\boldsymbol{\mathsf{b}}}^{{\boldsymbol{\nu}}} (|{\boldsymbol{\nu}}|!)^{\delta}.
\end{equation*}

By employing the identity  $\frac{(|{\boldsymbol{\nu}}|!)^\delta } {\boldsymbol{\nu}!} = (|{\boldsymbol{\nu}}|!)^{\delta-1} \frac{|{\boldsymbol{\nu}}|!}{{\boldsymbol{\nu}}!}$, inserting these estimates  into \eqref{trunc form}, we obtain\\
\resizebox{\textwidth}{!}{
	\begin{minipage}{\textwidth}
		\begin{align}
		{\left|I(F)-I_s(F)\right|}
		&\leq
		\widetilde C_F 
			((k+1)!)^{\delta-1}
		\left(
		\sum_{j=2}^{k}
		\sum_{\substack{{{\left|\boldsymbol{\nu}\right|}=j}\\{ \nu_i= 0, \forall i \leq s}\\{\nu_i\neq 1, \forall i \geq s+1}}}
			\frac{{\left|\boldsymbol{\nu}\right|}!}{\boldsymbol{\nu}!}
		M_k^{{\left|\boldsymbol{\nu}\right|}}\boldsymbol{\mathsf{b}}^{\boldsymbol{\nu}}
		+
		\sum_{\substack{{{\left|\boldsymbol{\nu}\right|}=k+1}\\{ \nu_i= 0, \forall i \leq s}}}
			\frac{{\left|\boldsymbol{\nu}\right|}!}{\boldsymbol{\nu}!}
		M_k^{{\left|\boldsymbol{\nu}\right|}}\boldsymbol{\mathsf{b}}^{\boldsymbol{\nu}}\right)\notag
		\\&
	{	=: \widetilde C_F 
		((k+1)!)^{\delta-1} (\mathfrak{A}_1 + \mathfrak{A}_2),}
		\label{trunc form 2}
	\end{align}
	\end{minipage}
}
	where $\mathfrak{A}_1$ and $\mathfrak{A}_2$ {are} the sums over $2 \le |{{\boldsymbol{\nu}}}| \le k$ and $|{{\boldsymbol{\nu}}}| = k+1$, respectively.
	
We bound the term $\mathfrak{A}_1$ by applying the identity \eqref{eq:geometric series} established in Section \ref{sec: multiindex}. Taking into account that $\nu_i(\nu_i-1) = 0$ for $\nu_i \in \{0,1\}$ and $\nu_i(\nu_i-1) \geq 2$ for $\nu_i \geq 2$, it follows that
\begin{align*}
\mathfrak{A}_1
&\leq
\sum_{j=2}^{k}
\sum_{\substack{{{\left|\boldsymbol{\nu}\right|}=j}}}
\frac{{\left|\boldsymbol{\nu}\right|}!}{\boldsymbol{\nu}!}
\left(\frac{1}{2}\sum_{i{\in {\mathbb N}}} \nu_i(\nu_i-1)\right)
M_k^{{\left|\boldsymbol{\nu}\right|}}\overline{\boldsymbol{\mathsf{b}}}_s^{\boldsymbol{\nu}}
\\
&\leq
\frac{1}{2}
\sum_{i{\in {\mathbb N}}}
\sum_{j\geq 0}
\sum_{\substack{{{\left|\boldsymbol{\nu}\right|}=j}}}
\frac{{\left|\boldsymbol{\nu}\right|}!}{\boldsymbol{\nu}!}
\nu_i(\nu_i-1)
M_k^{{\left|\boldsymbol{\nu}\right|}}\overline{\boldsymbol{\mathsf{b}}}_s^{\boldsymbol{\nu}}
\\
&=
\frac{1}{2}
\sum_{i{\in {\mathbb N}}}
\frac{2 M_k^2 \overline{{\mathsf{b}}}_{i}^2}{(1-M_k\sum_{j{\in {\mathbb N}}}\overline{{\mathsf{b}}}_j)^3}
=
\frac{ M_k^2 }{(1-M_k{\left\|{\overline{\boldsymbol{\mathsf{b}}}_s}\right\|_{\ell^1}})^3}
\sum_{i{\in {\mathbb N}}}
\overline{{\mathsf{b}}}_{i}^2
\end{align*}
{By applying Stechkin's lemma (see, e.g., \cite[Lemma 7.4.1]{DungTemlyakovUllrich2018}) we have  
$$
\left\|{\overline{\boldsymbol{\mathsf{b}}}_s}\right\|_{\ell^2}^2 \leq\left\|{{\boldsymbol{\mathsf{b}}}_s}\right\|_{\ell^p}^2 s^{-2/p+1}.
$$
Hence}, together with  $\left\|{\overline{\boldsymbol{\mathsf{b}}}_s}\right\|_{\ell^1}< \tfrac{1}{2} M_k^{-1}$ we obtain the bound for $\mathfrak{A}_1$
\begin{align*}
	\mathfrak{A}_1
	\leq
	\frac{M_k^2\left\|{\boldsymbol{\mathsf{b}}}\right\|_{\ell^p}^2}{(1-M_k\left\|{\overline{\boldsymbol{\mathsf{b}}}_s}\right\|_{\ell^1})^3}\,
	\,s^{-\frac{2}{p}+1}
	\leq
		8{M_k^2\left\|{\boldsymbol{\mathsf{b}}}\right\|_{\ell^p}^2}\,
	\,s^{-\frac{2}{p}+1}.
\end{align*}
Turning to the estimation of $\mathfrak{A}_2$, a direct application of the multinomial theorem \eqref{eq:multinomial} yields
\begin{align*}
	{\mathfrak{A}_2
	=
	\sum_{\substack{{{\left|\boldsymbol{\nu}\right|}=k+1}}}
		\frac{{\left|\boldsymbol{\nu}\right|}!}{\boldsymbol{\nu}!}
		M_k^{{\left|\boldsymbol{\nu}\right|}}\overline{\boldsymbol{\mathsf{b}}}_s^{\boldsymbol{\nu}}}
		=
		M_k^{k+1}\left(\sum_{i{\in {\mathbb N}}}\overline{{\mathsf{b}}}_i\right)^{k+1}
		\leq
		M_k^{k+1}
		\left\|{\boldsymbol{\mathsf{b}}}\right\|_{\ell^p}^{k+1} \, s^{-\frac{k+1}{p}+(k+1)}.
\end{align*}
{In the last step, we use Stechkin's lemma, which yields the inequality  $\left\|{\overline{\boldsymbol{\mathsf{b}}}_s}\right\|_{\ell^1}
 \leq\left\|{{\boldsymbol{\mathsf{b}}}_s}\right\|_{\ell^p} s^{-1/p+1}$.}  
 
 Finally, inserting the bounds for $\mathfrak{A}_1$ and $\mathfrak{A}_2$ into \eqref{trunc form 2} and taking the maximum of the two decay rates as {$\tau$ in \eqref{gamma def}} and defining $C_\delta$ as in \eqref{C_delta def} {complete} the proof.
 
 \hfill
\end{proof}
}
 
\section{Error analysis for quasi-Monte Carlo {quadrature}} 
\label{sec:Error Analysis for Quasi-Monte Carlo methods} 
{Consider the sequence of {standard} Gaussian probability measures $\{\mu_j\}_{j \in {\mathbb{N}}}$ on ${\mathbb R}$, each associated with the   density function 
$w(y_j):= \exp\left(-y_j^2/2\right)/(\sqrt{2\pi}) $. The normal infinite tensor product probability measure ${\boldsymbol{\mu}}$ on ${\mathbb R}^{\mathbb{N}}$ is defined by the formula \eqref{bmu}.
Let $F: {\mathbb R}^{\mathbb{N}} \to {\mathbb{R}}$ be a strongly ${\boldsymbol{\mu}}$-integrable function. In the previous section,  under certain conditions we gave an analysis of the error of approximation of the integral $I(F)$ by the dimension truncated integral
\begin{align}\label{trunc integral}
	I_s(F) =	
	\int_{{{\mathbb{R}}}^s} F(\boldsymbol{y}_s) \, {\prod_{j=1}^s w(y_j)} \, {\rm d} \boldsymbol{y}_s.
\end{align}
In this section, we consider the approximation of $I_s(F)$ by QMC methods.

  We denote by {${\mathfrak{w}}$} the cumulative distribution function, i.e., {${{\mathfrak{w}}}'(z)=w(z)$} for all $z\in {\mathbb{R}}$. 
Moreover, we denote by ${\mathfrak{w}}^{-1}$ the inverse distribution function of ${\mathfrak{w}}$ such that ${\mathfrak{w}}({\mathfrak{w}}^{-1}(y))=y$ for all $y\in (0,1)$ and $\Phi^{-1}({\boldsymbol{z}})=\left({\mathfrak{w}}^{-1}(z_j)\right)_{j \in {\mathbb N}}$ for all ${\boldsymbol{z}}\in {\mathbb{R}}^s$.} {To apply QMC rules}, we transform the integral over ${\mathbb{R}}^s$ into an integral over the bounded domain ${{\mathbb I}^s}:=(0,1)^s$ as
\begin{align*}
 I_s(F) =	
	\int_{{\mathbb I}^s}  F(\Phi^{-1}({\boldsymbol{z}}_s) )\, d \boldsymbol{z}_s.
\end{align*}

Let $Q^{\Delta}_{s,n}(F)$ denote the QMC quadrature rule constructed via a randomly shifted lattice rule. This method is characterized by a generating vector $\boldsymbol{\mathsf{z}}_{s} \in {\mathbb{N}}^s$ and a random shift $\Delta$ uniformly distributed over the unit cube $(0,1)^s$. For a total of $n$ quadrature points, the estimator is defined as
\begin{align}\label{QMC quad def}
	Q^{\Delta}_{s,n}(F) := \frac{1}{n} \sum_{j=1}^n F\left(\Phi^{-1} \left(\left\{ \frac{j\, \boldsymbol{\mathsf{z}}_{s}}{n} + \Delta \right\}\right)\right),
\end{align}
where $\{ \cdot \}$ denotes the fractional part applied component-wise. Notice that $Q^{\Delta}_{s,n}(F)$ depends on the random shift and therefore is a random variable. A popular measure of accuracy is the root mean square error 
\begin{align}\label{RMSE}
{\rm RMSE} = \sqrt{\mathbb{E}|I(F)-Q^{\Delta}_{s,n}(F)|^2},
\end{align}
where $\mathbb E$ is the expectation with respect to the random shifts $\Delta$. 
Let $\boldsymbol{\gamma} = (\gamma_{\mathfrak{u}})_{\mathfrak{u}\subseteq { \{1:{s}\}} }$ be a {family} of positive weights. {For each {$j\in {\mathbb N}$}, let  $\psi_j : {\mathbb{R}} \mapsto {\mathbb{R}}^+$ be a positive and continuous weight function ({not necessarily a probability density function}). } We define the weighted Sobolev space of mixed first order derivatives 
$\mathcal{W}_{\boldsymbol{\gamma}}({{\mathbb{R}}}^s)$ as the collection of all functions ${G}: {{\mathbb{R}}}^s \mapsto {\mathbb{R}}$ such that\\
\resizebox{\textwidth}{!}{
	\begin{minipage}{\textwidth}
\begin{align}\label{weighted norm def}
	\left\|{{G}}\right\|_{\mathcal{W}_{\boldsymbol{\gamma}}({{\mathbb{R}}}^s)}^2
	:=
	\sum_{\mathfrak{u} \subseteq {\{1:{s}\}} }
	\frac{1}{\gamma_{\mathfrak{u}}} 
	\int_{{{\mathbb{R}}}^{{\left|\mathfrak{u}\right|}}}
	\left(	\int_{{{\mathbb{R}}}^{{\left|\bar{\mathfrak{u}}\right|}}}
		\frac{\partial^{{\left|\mathfrak{u}\right|}} {G}}{\partial \boldsymbol{\xi}_{\mathfrak{u}}}(\boldsymbol{\xi})
		\prod_{j\in\operatorname{supp}\left\{\bar{\mathfrak{u}}\right\}}w(\xi_j)\,
		d\boldsymbol{\xi}_{\bar{\mathfrak{u}}}
	\right)^2
	\prod_{j\in\operatorname{supp}\left\{{\mathfrak{u}}\right\}} \psi_j^2(\xi_j)
	d\boldsymbol{\xi}_{\mathfrak{u}} < \infty.
\end{align}
\end{minipage}
}
Here $\bar{\mathfrak{u}}:= { \{1:{s}\}}\setminus \mathfrak{u} $ and $\frac{\partial^{{\left|\mathfrak{u}\right|}} {G}}{\partial \boldsymbol{\xi}_{\mathfrak{u}}}$ denotes the mixed first derivatives of ${G}$ with respect to the variable $\boldsymbol{\xi}_{\mathfrak{u}}=(\xi_j)_{j\in \mathfrak{u}}$. For the analysis from \cite{KuoSloan2010,Nichols2014} to hold, we need $\psi_j^2$ to decay slower than the standard normal density $w$. For example, we can choose 
\begin{align}\label{psi def}
	{\psi_j(\xi)=\exp(-\tau_j {\left|\xi\right|})} 
\end{align}
with the value of $\tau_j>0$ to be specified later.

The weight {family} $(\gamma_{\mathfrak{u}})_{\mathfrak{u} \subseteq { \{1:{s}\}}}$ is associated with each subset of the variables to moderate its relative importance with respect to the other subsets. With an appropriate choice of the weight we can derive an error bound, which is independent of the dimension~$s$. Moreover, we need some structure
of the weight for the component-by-component (CBC) construction cost to be feasible, see e.g. \cite{KuNu16,KuoSchwabSloan2012,KuoSchwabSloan2013}. Different types of weights have been considered depending on the problem and the estimation of $\frac{\partial^{{\left|\mathfrak{u}\right|}} F}{\partial \boldsymbol{\xi}_{\mathfrak{u}}}$.

To proceed with the QMC quadrature error analysis, we require an error bound for QMC quadratures of the form \eqref{QMC quad def}.{ A result of this type was established in \cite[Theorem 15]{Graham15} (see also \cite[Theorem 8]{Nichols2014} for a more general treatment) and is stated as follows.}

\begin{lemma} \label{lemma:QMC error} 
	Let $F \in \mathcal{W}_{\boldsymbol{\gamma}}({\mathbb{R}}^s)$, where the weight functions $\psi_j$ are defined as in \eqref{psi def}. For any given $s, n \in {\mathbb N}$, a randomly shifted lattice rule with $n$ points in $s$ dimensions can be constructed via the component-by-component (CBC) algorithm such that, for all $\vartheta \in (1/2,1]$,
	\begin{equation}\label{QMC error}
		\sqrt{\mathbb E\left({\left|I_s(F)-Q^{\Delta}_{s,n}(F)\right|}^2\right)}
		\leq
		\left(\sum_{\mathfrak{u}\subseteq { \{1:{s}\}}}
			\gamma_{\mathfrak{u}}^{\vartheta} \prod_{j\in \mathfrak{u}}\varrho_j(\vartheta)
		\right)^\frac{1}{2\vartheta}
		\varphi(n)^{-\frac{1}{2\vartheta}}
		\left\|{F}\right\|_{\mathcal{W}_{\boldsymbol{\gamma}}({{\mathbb{R}}}^s)},
	\end{equation}
	where the constants $\varrho_j(\vartheta)$ are given by
	\begin{equation}\label{varrrho_j}
		\varrho_{j}(\vartheta)
		:=
		2\left(\frac{\sqrt{2\pi}\exp(\tau_{j}^{2}/\varsigma )}{\pi^{2-2\varsigma }(1-\varsigma )\varsigma }\right)^{\vartheta}\zeta\left(\vartheta+\frac{1}{2}\right)
		\quad \text{with} \quad
		\varsigma :=\frac{2\vartheta-1}{4\vartheta}.
	\end{equation}
	Here, $\mathbb E$ denotes the expectation with respect to the random shift $\Delta$, while $\varphi$ represents the Euler totient function and $\zeta$ denotes the Riemann zeta function. 
\end{lemma}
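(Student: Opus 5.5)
This lemma is quoted from \cite[Theorem 15]{Graham15} and \cite[Theorem 8]{Nichols2014}, so the plan is to reproduce the standard argument: a kernel-based worst-case error bound, followed by a component-by-component averaging argument.

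\textbf{Step 1: reduction to the shift-averaged worst-case error.} The space $\mathcal{W}_{\boldsymbol{\gamma}}({\mathbb R}^s)$ with norm \eqref{weighted norm def} is a reproducing kernel Hilbert space. Its kernel is a weighted sum over $\mathfrak{u}$ of products of univariate kernels $\eta_j$, which are built from the weight $\psi_j$ and the Gaussian density $w$. For a fixed shift, the Cauchy--Schwarz inequality in this space bounds $|I_s(F)-Q^{\Delta}_{s,n}(F)|$ by $\|F\|_{\mathcal{W}_{\boldsymbol{\gamma}}}$ times the worst-case error at that shift. Taking the expectation over $\Delta$ then gives the left side of \eqref{QMC error} bounded by $e^{\rm sh}_{s,n}(\boldsymbol{\mathsf{z}}_s)\,\|F\|_{\mathcal{W}_{\boldsymbol{\gamma}}}$.

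\textbf{Step 2: Fourier form of the error.} After the change of variables ${\boldsymbol{y}}=\Phi^{-1}({\boldsymbol{z}})$, the shift-invariant kernel has a Fourier expansion on the torus. This gives the representation
\begin{equation*}
(e^{\rm sh}_{s,n})^2=\sum_{\emptyset\neq\mathfrak{u}\subseteq\{1:s\}}\gamma_{\mathfrak{u}}\frac{1}{n}\sum_{k=0}^{n-1}\prod_{j\in\mathfrak{u}}\theta_j\Big(\Big\{\frac{k\mathsf{z}_j}{n}\Big\}\Big),
\end{equation*}
where $\theta_j(x)=\sum_{h\neq0}\widehat{\theta}_j(h)e^{2\pi i h x}$. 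The key estimate is the decay of the coefficients:
\begin{equation*}
\widehat{\theta}_j(h)\le \frac{\sqrt{2\pi}\exp(\tau_j^2/\varsigma)}{\pi^{2-2\varsigma}(1-\varsigma)\varsigma}\,|h|^{-2+2\varsigma},\qquad \varsigma\in(0,1/2).
\end{equation*}
This follows from $\psi_j(\xi)=\exp(-\tau_j|\xi|)$ together with the Gaussian tail bound $\mathfrak{w}(-|t|)\lesssim w(t)$. One writes $\widehat\theta_j(h)$ as an integral of $\mathfrak{w}(t)(1-\mathfrak{w}(t))/\psi_j^2(t)$, integrates by parts, and interpolates between the bounds $|h|^{-1}$ and $|h|^{-2}$. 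This produces the factors $\exp(\tau_j^2/\varsigma)$ and $1/((1-\varsigma)\varsigma)$.

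\textbf{Step 3: CBC construction and averaging argument.} This is the main obstacle. Construct $\mathsf{z}_j$ one coordinate at a time, each time choosing the value in $\{1\le z\le n-1:\gcd(z,n)=1\}$ that minimizes $e^{\rm sh}$. Show by induction on $s$ that the resulting vector satisfies
\begin{equation*}
(e^{\rm sh}_{s,n})^2\le\Big(\varphi(n)^{-1}\sum_{\mathfrak{u}}\gamma_{\mathfrak{u}}^{\vartheta}\prod_{j\in\mathfrak{u}}\varrho_j(\vartheta)\Big)^{1/\vartheta}
\end{equation*}
for every $\vartheta\in(1/2,1]$. The inductive step bounds the minimum by the mean over all admissible $z$. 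Jensen's inequality $(\sum a_i)^\vartheta\le\sum a_i^\vartheta$ moves the power $\vartheta$ inside the sum. Summing over $z$, the characters reduce to a sum over $h\neq0$ of $\widehat\theta_j(h)^\vartheta\le C|h|^{-\vartheta(2-2\varsigma)}$. The choice $\varsigma=(2\vartheta-1)/(4\vartheta)$ makes the exponent $\vartheta(2-2\varsigma)=\vartheta+1/2>1$, so the sum over $h$ converges to $2\zeta(\vartheta+1/2)$. Together with the constant from Step 2 raised to the power $\vartheta$, this is exactly $\varrho_j(\vartheta)$ in \eqref{varrrho_j}.

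\textbf{Delicate points and conclusion.} The averaging over $z$ coprime to $n$ requires care when $n$ is not prime. One controls the sum of $\theta_j(\{kz/n\})$ over units $z$ via the divisors of $k$, which is why $\varphi(n)$ appears. Combining Steps 1–3 and taking square roots yields \eqref{QMC error}.
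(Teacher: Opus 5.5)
Your proposal is correct and follows the same route as the paper. The paper gives no proof of its own: it imports the bound from \cite[Theorem 15]{Graham15} and \cite[Theorem 8]{Nichols2014}, and those proofs consist of the RKHS/shift-averaged worst-case error reduction, the Fourier decay $\widehat{\theta}_j(h)\lesssim |h|^{-2+2\varsigma}$ for the Gaussian density with weight $\exp(-\tau_j|\xi|)$, and the CBC induction with Jensen's inequality on the (nonnegative) dual-lattice sum, which is what you sketch. Your exponent check $\vartheta(2-2\varsigma)=\vartheta+\tfrac12$ and the $\varphi(n)$ from averaging over the units recover $\varrho_j(\vartheta)$ exactly as stated; in the dual-lattice form, the composite-$n$ averaging is done by grouping according to $\gcd(\boldsymbol{h}_{\mathfrak{v}}\cdot\boldsymbol{z}_{\mathfrak{v}},n)$ rather than the divisors of $k$ you mention.
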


The following {theorem establishes the convergence rate} of the QMC quadrature \eqref{QMC quad def} for numerical integration of Gevrey $\delta$-regularity functions $F$ with $\delta \geq 1$.

\begin{theorem}\label{QMC theorem}
Let $F: {\mathbb{R}}^{\mathbb{N}} \to {\mathbb{R}}$.
{Assume that there exist constants $\delta \geq 1$,  {a number} $p \in (0,\delta^{-1}]$,} a positive sequence $\boldsymbol{\mathsf{b}} \in \ell^p$, a {sequence of positive functions} $\boldsymbol{\beta}(\boldsymbol{\xi})= (\beta_j(\xi_j))_{j \in \mathbb{N}}$, and {a function 
	$C_F(\boldsymbol{\xi})=\prod_{j\ge 1} C_j(\xi_j)< \infty$ with components $C_j(\xi_j)\geq 1$ such that the following conditions on $F$  are satisfied:}

\begin{enumerate}[label={\rm (\roman*)}]
	\item For all $\boldsymbol{\xi} \in {\mathbb{R}}^{\mathbb{N}}$ and $\boldsymbol{\nu} \in \mathcal F$:
	\begin{align}\label{QMC lemma assumption}
		{\left|\partial^{\boldsymbol{\nu}} F (\boldsymbol{\xi}) \right|}
		\leq
		C_F(\boldsymbol{\xi}) \, \boldsymbol{\beta}(\boldsymbol{\xi})^{\boldsymbol{\nu}} \boldsymbol{\mathsf{b}}^{\boldsymbol{\nu}}({\left|\boldsymbol{\nu}\right|}!)^{\delta}. 
	\end{align}
	\item The function $C_F$ is integrable with respect to {the $s$-dimensional normal measure ${\boldsymbol{\mu}}_s$ and}
	\begin{align}\label{QMC lemma assumption2a}	
	    K:=
		\prod_{j=1}^s {\int_{{\mathbb{R}}} C_j(\xi_j) w(\xi_j) d\xi_j}  < \ \infty.
	\end{align}
	\item The {functions} $C_j$ and $\beta_j$ satisfy 
	\begin{align*}
		\rho:= \max_{j\in {\mathbb{N}}} \left(
			\int_{{\mathbb{R}}}C_j(\xi_j)^2\beta_j (\xi_j)^2 \psi_j^2(\xi_j) {{d}\xi_j}
		\right)^{\frac{1}{2}}
	\  {<  \infty}.
	\end{align*}
\end{enumerate}

Consider the approximation of {the  integral $I_s(F)$ by the randomly shifted lattice quadrature rule $Q^{\Delta}_{s,n}(F)$ with $n=2^m$ points}, $m \in \mathbb N$.

Define the {number} $\vartheta$ as:
\begin{align} \label{vartheta}
	\vartheta :=
	\left\{\begin{matrix}
		\omega & \text{for any } \omega \in (1/2,1), & \text{when } p\in (0,2/(3\delta)],
		\\
		\frac{\delta p}{2-\delta p}, & & \text{when } p\in (2/(3\delta),1/\delta],
	\end{matrix}\right.
\end{align}
and the weights $\gamma_{\mathfrak{u}}$ by:
\begin{align}\label{QMC weight}
	\gamma_{\mathfrak{u}} = \left(({\left|\mathfrak{u}\right|}!)^{\delta} \prod_{j\in \mathfrak{u} } {\frac{\rho {\mathsf{b}}_j}{\sqrt{\varrho_j(\vartheta)}}}\right)^{\frac{2}{1+\vartheta}},
\end{align}
where
{$\varrho_j(\vartheta)$ is defined in \eqref{varrrho_j}}
with a suitable choice of sequence $\left\{\tau_j\right\}_{j\in {\mathbb{N}}}$ such that $\varrho_j(\vartheta)\leq \varrho_{\max}(\vartheta)<\infty$ for all $j\geq 1$ and $\vartheta >0$.

Then {$F \in \mathcal{W}_{\boldsymbol{\gamma}}({\mathbb{R}}^s)$ and} there exists a constant $C_{\gamma,\vartheta}$, independent of {$n$ and $s$ such that} 
\begin{align}\label{equ: QMC error}
	\sqrt{\mathbb E_{\Delta}\left({\left|I_s(F)-Q^{\Delta}_{s,n}(F)\right|}^2\right)}
	\leq
	C_{{\gamma,\vartheta}}^{\frac{1}{2}}\,{n^{-\frac{1}{2\vartheta}}}.
\end{align}
\end{theorem}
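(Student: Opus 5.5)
The plan follows the standard route of \cite{Graham15,KuNu16}: combine Lemma~\ref{lemma:QMC error} with an explicit estimate of $\|F\|_{\mathcal{W}_{\boldsymbol{\gamma}}({\mathbb R}^s)}$, and then use the choice \eqref{QMC weight} to make the product of the two factors in \eqref{QMC error} bounded independently of $s$.

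\textbf{Step 1: bound the weighted norm.} For $\mathfrak{u}\subseteq\{1:s\}$, the mixed first derivative $\partial^{|\mathfrak{u}|}F/\partial\boldsymbol{\xi}_{\mathfrak{u}}$ is $\partial^{\boldsymbol{\nu}}F$ with $\boldsymbol{\nu}=\mathbf{1}_{\mathfrak{u}}$, so $|\boldsymbol{\nu}|=|\mathfrak{u}|$. By \eqref{QMC lemma assumption},
\[
\Bigl|\tfrac{\partial^{|\mathfrak{u}|}F}{\partial\boldsymbol{\xi}_{\mathfrak{u}}}(\boldsymbol{\xi})\Bigr|\le (|\mathfrak{u}|!)^\delta \prod_{j\in\mathfrak{u}}C_j(\xi_j)\beta_j(\xi_j){\mathsf{b}}_j\prod_{j\notin\mathfrak{u}}C_j(\xi_j).
\]
Integrate over $\boldsymbol{\xi}_{\bar{\mathfrak{u}}}$ against $\prod w$. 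This produces at most $K$, since each factor $\int C_j w\ge 1$ because $C_j\ge1$. Squaring, multiplying by $\prod_{j\in\mathfrak{u}}\psi_j^2$ and integrating over $\boldsymbol{\xi}_{\mathfrak{u}}$ then gives at most $\rho^{2|\mathfrak{u}|}$ by condition (iii), the product structure being exploited throughout. Hence
\[
\|F\|^2_{\mathcal{W}_{\boldsymbol{\gamma}}}\le K^2\sum_{\mathfrak{u}}\frac{(|\mathfrak{u}|!)^{2\delta}\prod_{j\in\mathfrak{u}}\rho^2{\mathsf{b}}_j^2}{\gamma_{\mathfrak{u}}}.
\]
The terms for $j\notin\mathfrak{u}$ beyond $s$ do not appear, since $F$ is evaluated at $s$-dimensional points. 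Finiteness for fixed $s$ gives $F\in\mathcal{W}_{\boldsymbol{\gamma}}({\mathbb R}^s)$.

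\textbf{Step 2: balance the two factors.} Write $A_{\mathfrak{u}}:=(|\mathfrak{u}|!)^\delta\prod_{j\in\mathfrak{u}}\rho{\mathsf{b}}_j/\sqrt{\varrho_j(\vartheta)}$. Then the RMSE squared is bounded by
\[
K^2\Bigl(\sum_{\mathfrak{u}}\gamma_{\mathfrak{u}}^\vartheta\prod_{j\in\mathfrak{u}}\varrho_j\Bigr)^{1/\vartheta}\Bigl(\sum_{\mathfrak{u}}\gamma_{\mathfrak{u}}^{-1}A_{\mathfrak{u}}^2\prod_{j\in\mathfrak{u}}\varrho_j\Bigr)\varphi(n)^{-1/\vartheta}.
\]
The choice $\gamma_{\mathfrak{u}}=A_{\mathfrak{u}}^{2/(1+\vartheta)}$ minimizes this product (the usual Lagrange argument of \cite{KuoSchwabSloan2012}). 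With it both sums coincide with
\[
S_\vartheta:=\sum_{\mathfrak{u}\subseteq\{1:s\}}(|\mathfrak{u}|!)^{2\delta\vartheta/(1+\vartheta)}\prod_{j\in\mathfrak{u}}\bigl(\rho^{2\vartheta}{\mathsf{b}}_j^{2\vartheta}\varrho_j^{1-\vartheta}\bigr)^{1/(1+\vartheta)} .
\]
The error is therefore at most $K S_\vartheta^{(1+\vartheta)/(2\vartheta)}\varphi(n)^{-1/(2\vartheta)}$. For $n=2^m$ we have $\varphi(n)=n/2$, so this is $C_{\gamma,\vartheta}^{1/2}n^{-1/(2\vartheta)}$ with $C_{\gamma,\vartheta}$ proportional to $K^2 S_\vartheta^{(1+\vartheta)/\vartheta}$.

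\textbf{Step 3 (main obstacle): show $S_\vartheta$ is bounded uniformly in $s$.} First bound $\varrho_j\le\varrho_{\max}$. Grouping the sum by $\ell=|\mathfrak{u}|$ and using $\sum_{|\mathfrak{u}|=\ell}\prod_{j\in\mathfrak{u}}c_j\le(\sum_j c_j)^\ell/\ell!$ gives
\[
S_\vartheta\le\sum_{\ell\ge0}(\ell!)^{\frac{2\delta\vartheta}{1+\vartheta}-1}\Bigl(C\sum_j{\mathsf{b}}_j^{\frac{2\vartheta}{1+\vartheta}}\Bigr)^\ell .
\]
This sum is finite provided that:
\begin{itemize}
\item ${\mathsf{b}}\in\ell^{2\vartheta/(1+\vartheta)}$, which requires $p\le 2\vartheta/(1+\vartheta)$, i.e.\ $\vartheta\ge p/(2-p)$;
\item $\frac{2\delta\vartheta}{1+\vartheta}<1$, so that the factorial factor decays.
\end{itemize}
The second condition is not compatible with every $\vartheta$. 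This is the delicate point, and the crude symmetric bound above is too lossy. I would instead follow \cite{KuoSchwabSloan2012,GuthKaarnioja2025}: split the exponent as $(\ell!)^{2\delta\vartheta/(1+\vartheta)}\prod_{j\in\mathfrak u}{\mathsf b}_j^{2\vartheta/(1+\vartheta)}$, and use Hölder together with the summability $\sum_{\mathfrak{u}}(|\mathfrak u|!)^{\delta q}\prod_{j\in\mathfrak{u}}{\mathsf{b}}_j^q<\infty$, which holds whenever $\delta q<1$ and $q\ge p$ up to a geometric factor. Taking $q=2\vartheta/(1+\vartheta)$, the admissibility condition $q\ge\delta p$ becomes $\vartheta\ge\delta p/(2-\delta p)$, which is exactly the choice \eqref{vartheta}. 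For $p\le 2/(3\delta)$ this threshold is at most $1/2$, so any $\omega\in(1/2,1)$ works.

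Because ${\mathsf{b}}$ need not be small, the geometric factor must be handled by splitting off finitely many large ${\mathsf{b}}_j$, which contributes only a finite constant. I expect this step, controlling the factorial growth against $\ell^{q}$-summability uniformly in $s$ and at the boundary case $q=\delta p$, to be the main technical obstacle. It will require careful constant tracking, but the result is a constant independent of $s$ and $n$, as claimed in \eqref{equ: QMC error}.
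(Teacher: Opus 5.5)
\textbf{There is a genuine gap in Step 3.} Your Steps 1 and 2 agree with the paper's proof. There is one harmless slip: with $\gamma_{\mathfrak u}=A_{\mathfrak u}^{2/(1+\vartheta)}$, the factor $\varrho_j$ enters $S_\vartheta$ with exponent $1/(1+\vartheta)$, not $(1-\vartheta)/(1+\vartheta)$; this is irrelevant since $\varrho_j\le\varrho_{\max}$.

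Step 3 is the real content of the theorem, and your proposed repair does not get past the obstacle you identified. With $q=2\vartheta/(1+\vartheta)$, the sum $\sum_{\mathfrak u}(|\mathfrak u|!)^{\delta q}\prod_{j\in\mathfrak u}{\mathsf b}_j^q$ is just $S_\vartheta$ with the factors $\rho^q\varrho_j^{1/(1+\vartheta)}$ removed. So no H\"older splitting has actually happened. Your sufficient condition $\delta q<1$ is literally $2\delta\vartheta/(1+\vartheta)<1$, the condition you had just said fails. The switch from ``$q\ge p$'' to ``$q\ge\delta p$'' is not derived; it is read off from \eqref{vartheta}.

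For the prescribed $\vartheta$ the condition genuinely fails:
\begin{itemize}
\item $\vartheta=\delta p/(2-\delta p)$ gives $q=\delta p$ and $\delta q=\delta^2p>2\delta/3\ge 1$ whenever $\delta\ge 3/2$. For instance, $\delta=1.2$, $p=0.8$ gives $\delta^2p=1.152$.
\item $\vartheta=\omega$ close to $1$ gives $\delta q>1$ for every $\delta>1$.
\end{itemize}
In that regime the majorant $\sum_\ell(\ell!)^{\delta q-1}x^\ell$, obtained from $\sum_{|\mathfrak u|=\ell}\prod_{j\in\mathfrak u}c_j\le(\sum_j c_j)^\ell/\ell!$, diverges for every $x>0$. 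Splitting off finitely many large ${\mathsf b}_j$ cannot rescue it. H\"older's inequality alone cannot push the factorial power below $\delta q$ either, since one of the two resulting factors always carries a power at least $\delta q$. So it never brings you back into the regime where your crude bound applies.

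\textbf{The missing ingredient} is an inequality that does lower the exponent: $\sum_{\mathfrak u}a_{\mathfrak u}\le(\sum_{\mathfrak u}a_{\mathfrak u}^{1/\delta})^{\delta}$ for $a_{\mathfrak u}\ge 0$ and $\delta\ge 1$, i.e.\ $\|\cdot\|_{\ell^1}\le\|\cdot\|_{\ell^{1/\delta}}$. The paper proceeds as follows.
\begin{itemize}
\item Applying this to $S_{s,\vartheta}$ replaces $(|\mathfrak u|!)^{2\delta\vartheta/(1+\vartheta)}$ by $(|\mathfrak u|!)^{\rho'}$ with $\rho'=2\vartheta/(1+\vartheta)<1$, and $\rho{\mathsf b}_j$ by $(\rho{\mathsf b}_j)^{1/\delta}$.
\item It then inserts $\prod_{j\in\mathfrak u}\alpha_j^{\rho'}$ and applies H\"older with exponents $1/\rho'$ and $1/(1-\rho')$.
\item The first factor is bounded by $\sum_{\mathfrak u}|\mathfrak u|!\prod_{j\in\mathfrak u}\alpha_j\le(1-\sum_j\alpha_j)^{-1}$. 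The complementary factor is bounded by $\exp(c\sum_j((\rho{\mathsf b}_j)^{1/\delta}/\alpha_j)^{2\vartheta/(1-\vartheta)})$.
\item With $\alpha_j=(\rho{\mathsf b}_j)^p/\varpi$ and $\varpi>\rho^p\|\boldsymbol{\mathsf b}\|_{\ell^p}^p$, the first factor is finite. The second is finite provided $(\frac{1}{\delta}-p)\frac{2\vartheta}{1-\vartheta}\ge p$, i.e.\ $\vartheta\ge\delta p/(2-\delta p)$.
\end{itemize}
This is where the threshold in \eqref{vartheta} actually comes from.

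Your splitting idea could also be made to work, but only after the same norm inequality. For $\delta q\ge 1$, applying it with exponent $\delta q$ gives $S_\vartheta\le(\sum_{\mathfrak u}|\mathfrak u|!\prod_{j\in\mathfrak u}x_j)^{\delta q}$ with $x_j\lesssim{\mathsf b}_j^{1/\delta}$, which is summable because $p\le 1/\delta$. Write $\mathfrak u=\mathfrak u_1\cup\mathfrak u_2$ with $\mathfrak u_1\subseteq\{1,\dots,J\}$ and use $|\mathfrak u|!\le 2^{|\mathfrak u|}\,|\mathfrak u_1|!\,|\mathfrak u_2|!$. Choosing $J$ with $2\sum_{j>J}x_j<1$ then makes that sum finite, uniformly in $s$. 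Combined with your crude bound in the case $\delta q<1$, this would also close Step 3.
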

\begin{proof}
	The proof follows the lines of \cite[Theorem 4.2] {Gilbert2019} and \cite[Theorem 6.4]{KuoSchwabSloan2012}.
	We start {with estimating the weighted norm \eqref{weighted norm def} of  the function $F$: }\\
	\resizebox{\textwidth}{!}{
		\begin{minipage}{\textwidth}
	\begin{align*}
		\left\|{F}\right\|_{\mathcal{W}_{\boldsymbol{\gamma}}({{\mathbb{R}}}^s)}^2 
		&\leq
		\sum_{\mathfrak{u} \subseteq { \{1:{s}\}} }
	\frac{1}{\gamma_{\mathfrak{u}}} 
	\int_{{{\mathbb{R}}}^{{\left|\mathfrak{u}\right|}}}
	\left(	\int_{{{\mathbb{R}}}^{{\left|\bar{\mathfrak{u}}\right|}}}
	C_F(\boldsymbol{\xi})  \, \boldsymbol{\mathsf{b}}^{\mathfrak{u}} \boldsymbol{\beta}(\boldsymbol{\xi})^{\mathfrak{u}} ({\left|\mathfrak{u}\right|}!)^{\delta}
			\prod_{i\in\operatorname{supp}\left\{\bar{\mathfrak{u}}\right\}}w(\xi_i)\,
		d\boldsymbol{\xi}_{\bar{\mathfrak{u}}}
	\right)^2
		\prod_{j\in\operatorname{supp}\left\{{\mathfrak{u}}\right\}} \psi_j^2(\xi_j)
	d\boldsymbol{\xi}_{\mathfrak{u}}\\
	&=
		\sum_{\mathfrak{u} \subseteq { \{1:{s}\}} }
	\frac{	 ({\left|\mathfrak{u}\right|}!)^{2 \delta} {\boldsymbol{\mathsf{b}}}^{2\mathfrak{u}} }{\gamma_{\mathfrak{u}}} 
	\int_{{{\mathbb{R}}}^{{\left|\mathfrak{u}\right|}}}
	\prod_{j\in \mathfrak{u}}  C_j (\xi_j)^2 
	(\beta_j(\xi_j)^2)
		\left(	\int_{{{\mathbb{R}}}^{{\left|\bar{\mathfrak{u}}\right|}}}
		\prod_{i\in \bar{\mathfrak{u}}} C_i (\boldsymbol{\xi}_{\bar{\mathfrak{u}}}) w(\xi_j)
		d\boldsymbol{\xi}_{\bar{\mathfrak{u}}}
	\right)^2
		\prod_{j\in\operatorname{supp}\left\{{\mathfrak{u}}\right\}} \psi_j^2(\xi_j)
	d\boldsymbol{\xi}_{\mathfrak{u}}\\
	&=
		\sum_{\mathfrak{u} \subseteq { \{1:{s}\}} }
			\frac{	 ({\left|\mathfrak{u}\right|}!)^{2 \delta}  \rho^{2{\left|\mathfrak{u}\right|}} {\boldsymbol{\mathsf{b}}}^{2\mathfrak{u}}}{\gamma_{\mathfrak{u}}} 
					\left(	\int_{{{\mathbb{R}}}^{{\left|\bar{\mathfrak{u}}\right|}}}
				\prod_{i\in \bar{\mathfrak{u}}} C_i (\boldsymbol{\xi}_{\bar{\mathfrak{u}}})
				\prod_{j\in\operatorname{supp}\left\{\bar{\mathfrak{u}}\right\}}w(\xi_j)\,
				d\boldsymbol{\xi}_{\bar{\mathfrak{u}}}
			\right)^2
	\leq
			{K}^2
				\sum_{\mathfrak{u} \subseteq { \{1:{s}\}} }
					\frac{	\Lambda_{\mathfrak{u}}^2	}{\gamma_{\mathfrak{u}}}, 
			\end{align*}
		\end{minipage}
		}\\	
	where
			\begin{align*}
			{ K =
			\prod_{j=1}^s
			{
				\int_{{{\mathbb{R}}}}
		C_j (\xi_j) w_j(\xi_j)
				d\xi_{j}}  < \infty,}
				\qquad
		\Lambda_{\mathfrak{u}}:= ({\left|\mathfrak{u}\right|}!)^\delta \prod_{j\in \mathfrak{u}} {\rho{{\mathsf{b}}}_j}.
	\end{align*}
	{This yields $F \in \mathcal{W}_{\boldsymbol{\gamma}}({\mathbb{R}}^s)$. We apply Lemma~\ref{lemma:QMC error} to establish \eqref{equ: QMC error}.}
	Notice that if $n=2^m$, {for the Euler quotient function,} it holds that $\varphi(n) = \frac{n}{2}$. Therefore, \eqref{QMC error} implies the estimate for the mean-square error
	\begin{align*}
		\mathbb E\left({\left|I_s(F)-Q^{\Delta}_{s,n}(F)\right|}^2\right)
		\leq
		C_{s,\gamma,\vartheta} \, \left(\frac{n}{2}\right)^{-\frac{1}{\vartheta}},
	\end{align*}
	where
	\begin{align*}
		C_{s,\gamma,\vartheta}
		=
			\left(\sum_{\mathfrak{u}\subseteq { \{1:{s}\}}}
			\gamma_{\mathfrak{u}}^{\vartheta} \prod_{j\in \mathfrak{u}}\varrho_j(\vartheta)	
		\right)^\frac{1}{\vartheta}
		\left({{K}}^2 \sum_{\mathfrak{u}\subseteq  { \{1:{s}\}}} \frac{\Lambda_{\mathfrak{u}}^2}{\gamma_{\mathfrak{u}}} \right).		 
	\end{align*}
	{We show} that $C_{s,\gamma,\vartheta}$ can be bounded independently of $s$.
	Following \cite[Lemma 6.2]{KuoSchwabSloan2012} the optimal selection of the weights is given by \eqref{QMC weight}. For this specific weights we have 
	\begin{align}\label{S def}
		C_{s,\gamma,\vartheta}
		= 
		K^2  \, S_{s,\vartheta}^{(1+\vartheta)/\vartheta}, \quad \text{where} \quad
		S_{s,\vartheta}
		:=
		\sum_{\mathfrak{u} \in { \{1:{s}\}}}
		\left(\Lambda_{\mathfrak{u}}^{2\vartheta}\,  \prod_{j\in \mathfrak{u}}\varrho_j(\vartheta)	 \right)^{\frac{1}{1+\vartheta}}.
	\end{align}
	{Thus, it is sufficient to prove} that $S_{s,\vartheta}$ can be bounded independently of $s$, {and, hence, $C_{s, \gamma, \vartheta} \leq C_{\gamma, \vartheta}$ for all $s$.}

	{Since $0<p<\delta^{-1}$ and $\delta \geq 1$ we obtain that}\\
	\resizebox{\textwidth}{!}{
		\begin{minipage}{\textwidth}
	\begin{align*}
		&S_{s,\vartheta}
		=
		\sum_{\mathfrak{u} \subseteq { \{1:{s}\}}}
		\left(\Lambda_{\mathfrak{u}}^{2\vartheta}\, \prod_{j\in \mathfrak{u}}\varrho_j(\vartheta) \right)^{\frac{1}{1+\vartheta}}
		\leq
		\left(
			\sum_{\mathfrak{u} \subseteq { \{1:{s}\}}}
			\left(\Lambda_{\mathfrak{u}}^{2\vartheta}\, \prod_{j\in \mathfrak{u}}\varrho_j(\vartheta) \right)^{\frac{1}{(1+\vartheta)\delta}}
		\right)^{\delta}\\
		&=
		\left(
			\sum_{\mathfrak{u} \subseteq { \{1:{s}\}}}
			({\left|\mathfrak{u}\right|}!)^{\frac{2 \vartheta }{1+\vartheta}}  \prod_{j \in \mathfrak{u}} 
			\left(\rho{{\mathsf{b}}}_j
			\right)^{\frac{2\vartheta}{(1+\vartheta)\delta}} {\varrho_j(\vartheta)}^{\frac{1}{(1+\vartheta)\delta}}
		\right)^{\delta}
		=
		\left(
			\sum_{\mathfrak{u} \subseteq { \{1:{s}\}}}  
			({\left|\mathfrak{u}\right|}!)^{\rho}  \prod_{j \in \mathfrak{u}}
			\left(\rho{{\mathsf{b}}}_j\right)^{\frac{\rho}{\delta}} 
			{\varrho_j(\vartheta)}^{\frac{1}{(1+\vartheta)\delta}}\right)^\delta,
	\end{align*}
	\end{minipage}
}
	where $\rho:= \frac{2\vartheta }{1+\vartheta} < 1$ for $\vartheta < 1$.
	Let 
	$$\eta:= \frac{\rho }{1-\rho} =
	\frac{2\vartheta}{1-\vartheta} 
	\ \ \text{and} \ \
	  \upsilon :=
	\frac{1}{(1+\vartheta)\delta(1-\rho)}
	=
	\frac{1}{(1-\vartheta)\delta}.
	$$
	 We multiply and divide each term in the above estimate by $\prod_{j \in \mathfrak{u}} \alpha_j^{\rho}$ with $\alpha_j > 0$ to be specified 
	later. Then we apply H\"older's inequality with conjugate exponents $\frac{1}{\rho}>1$ and $\frac{1}{1-\rho}>1$ and \cite[Lemma 6.3]{KuoSchwabSloan2012} to obtain
	\begin{equation} \label{Sstheta}
		\begin{split}
			S_{s,\vartheta}
			&\leq
			\left(
				\sum_{\mathfrak{u} \subseteq { \{1:{s}\}}}  ({\left|\mathfrak{u}\right|}!)^{\rho} 
				\left(\prod_{j \in \mathfrak{u}} \alpha_j^\rho\right)
				\left( \prod_{j \in \mathfrak{u}} 
					\left(\frac{(\rho{{\mathsf{b}}}_j)^{\frac{1}{\delta}}}{\alpha_j}\right)^{\rho} 
					\varrho_j(\vartheta)^{\frac{1}{(1+\vartheta)\delta}}\right)\right)^\delta
			\\
			&\leq
			\left(\sum_{{\left|\mathfrak{u}\right|} < \infty}  {\left|\mathfrak{u}\right|}! \prod_{j \in \mathfrak{u}} \alpha_j\right)^{\rho\delta}
			\left(\sum_{{\left|\mathfrak{u}\right|} < \infty}   \prod_{j \in \mathfrak{u}} 
				\left(\frac{(\rho{{\mathsf{b}}}_j)^{\frac{1}{\delta}}}{\alpha_j}\right)^{\eta}  \varrho_j(\vartheta)^{ \upsilon } \right)^{{(1-\rho)\delta}}
			\\
			&\leq
			\left(\frac{1}{1-\sum_{j \in \mathbb{N}} \alpha_j}\right)^{\rho\delta} 
			\exp \left((1-\rho)\delta\,{\varrho_{\max}(\vartheta)^{\upsilon}}
				\sum_{j \in \mathbb{N}}	\left(\frac{(\rho{{\mathsf{b}}}_j)^{\frac{1}{\delta}}}{\alpha_j}\right)^{\eta}\right).
		\end{split}
	\end{equation}
	{Notice that 
	the right-hand side is finite}, if $\sum_{j\geq 1}\alpha_j < 1$ and 
	$\sum_{j\geq 1} 	\big( (\rho{{\mathsf{b}}}_j)^{\frac{1}{\delta}}/\alpha_j\big)^{\eta}$ 
	is finite. In this case this bound is also independent of $s$. We now choose
	\begin{align*}
		\alpha_j := \frac{(\rho{{\mathsf{b}}}_j)^{p}}{\varpi} \text{ for some } \varpi > \rho^p\left\|{(\boldsymbol{\mathsf{b}})}\right\|_{\ell^p}^p.
	\end{align*}
	This clearly implies the first condition  $\sum_{j\geq 1}\alpha_j < 1$. Concerning the second condition, we observe that
	\[
	\sum_{j \in \mathbb{N}}	\left(\frac{(\rho{{\mathsf{b}}}_j)^{\frac{1}{\delta}}}{\alpha_j}\right)^{\eta} 	
	= \varpi^\eta \rho^{(\frac{1}{\delta}-p)\eta} \sum_{j \in \mathbb{N}} 	{{\mathsf{b}}}_j^{(\frac{1}{\delta}-p)\eta}. 
	\] 
	According to \eqref{QMC lemma assumption}, this sum is finite if $(\frac{1}{\delta}-p)\eta \geq p$. Recalling the definition of $\eta$, this is equivalent to
	\begin{align*}
		\vartheta \geq \frac{\delta p}{2-\delta p}.
	\end{align*}
	Since the error estimate \eqref{QMC error} and \eqref{Sstheta} are simultaneously valid for $\vartheta \in (\frac{1}{2},1)$, we choose $\vartheta$ as any fixed value $\omega\in\left(\tfrac{1}{2}, 1 \right)$ when $\delta p\in \left( 0,\tfrac{2}{3}  \right]$, and for $\delta p\in \left(\tfrac{2}{3}  , 1  \right)$, we set $\vartheta = \frac{\delta p}{2-\delta p}$. 
	\hfill
\end{proof}

In \cite[ Theorem 6.2]{GuthKaarnioja2025}, the authors established bounds on the QMC quadrature error \eqref{RMSE} for the dimensionally truncated weak solution of the parametric PDE \eqref{ellip} with random diffusion coefficient $a$
 exhibiting global Gevrey regularity satisfying \eqref{norm-G-regularity}.
Within this same regime, we demonstrate that the QMC method \eqref{QMC quad def} attains a substantially improved convergence rate, even under the weaker assumption of pointwise Gevrey regularity, as stated in the following {theorem}.
\begin{theorem}
	Suppose that the assumptions of Theorem \ref{QMC theorem} are satisfied and $\delta < \frac{3}{2}$. Then, $F \in \mathcal{W}_{\boldsymbol{\gamma}}({\mathbb{R}}^s)$, and there exists a constant $C_{\gamma,\vartheta} > 0$, independent of both the dimension $s$ and the number of points $n$, {such that} 
	\begin{align}\label{equ: QMC error coro}
		\sqrt{\mathbb E_{\Delta}\left({\left|I_s(F)-Q^{\Delta}_{s,n}(F)\right|}^2\right)}
		\leq
		C_{{\gamma,\vartheta}}^{\frac{1}{2}} \, n^{-\frac{1}{2\vartheta}},
	\end{align}
	where the parameter $\vartheta$ is given by
	\begin{align} \label{vartheta coro}
		\vartheta :=
		\begin{cases}
			\omega, & \text{for some } \omega \in (\tfrac{1}{2},\frac{1}{2\delta-1}), 
			\text{ if } p \in \left(0, \tfrac{2}{3}\right], \\[1.5ex]
			\frac{p}{2- p}, & \text{if } p \in \left(\frac{2}{3}, \frac{1}{\delta}\right).
		\end{cases}
	\end{align}
\end{theorem}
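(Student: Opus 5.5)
The plan is to reuse the proof of Theorem~\ref{QMC theorem} up to the point where the weighted norm is bounded, $\left\|{F}\right\|_{\mathcal{W}_{\boldsymbol{\gamma}}({\mathbb{R}}^s)}^2\le K^2\sum_{\mathfrak{u}}\Lambda_{\mathfrak{u}}^2/\gamma_{\mathfrak{u}}$. Lemma~\ref{lemma:QMC error} and the weight choice \eqref{QMC weight} then again give $C_{s,\gamma,\vartheta}=K^2S_{s,\vartheta}^{(1+\vartheta)/\vartheta}$, with $S_{s,\vartheta}$ as in \eqref{S def}. This already shows $F\in\mathcal{W}_{\boldsymbol{\gamma}}({\mathbb{R}}^s)$. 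The only change concerns the bound on $S_{s,\vartheta}$. In Theorem~\ref{QMC theorem} the factor $\delta$ was pulled outside the sum (the $\ell^{1/\delta}$ step), and this lost rate. Instead I would keep the factorial with its natural exponent
\[
r:=\frac{2\delta\vartheta}{1+\vartheta},
\qquad
S_{s,\vartheta}=\sum_{\mathfrak{u}\subseteq\{1:s\}}(|\mathfrak{u}|!)^{r}\prod_{j\in\mathfrak{u}}(\rho{\mathsf{b}}_j)^{\frac{2\vartheta}{1+\vartheta}}\varrho_j(\vartheta)^{\frac{1}{1+\vartheta}} .
\]
The key requirement is $r<1$, which is equivalent to $\vartheta<1/(2\delta-1)$. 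The interval $(\tfrac12,\tfrac1{2\delta-1})$ is nonempty precisely because $\delta<\tfrac32$; this is where that hypothesis enters.

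Next I would insert $\prod_{j\in\mathfrak{u}}\alpha_j^{r}\alpha_j^{-r}$ and apply H\"older's inequality with exponents $1/r$ and $1/(1-r)$. This gives
\[
S_{s,\vartheta}\le\Big(\sum_{|\mathfrak{u}|<\infty}|\mathfrak{u}|!\prod_{j\in\mathfrak{u}}\alpha_j\Big)^{r}
\Big(\sum_{|\mathfrak{u}|<\infty}\prod_{j\in\mathfrak{u}}\Big[(\rho{\mathsf{b}}_j)^{\frac{2\vartheta}{1+\vartheta}}\alpha_j^{-r}\varrho_j(\vartheta)^{\frac1{1+\vartheta}}\Big]^{\frac1{1-r}}\Big)^{1-r}.
\]
The first factor is at most $(1-\sum_j\alpha_j)^{-r}$ by \cite[Lemma 6.3]{KuoSchwabSloan2012}. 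The second factor is at most $\exp\big((1-r)\varrho_{\max}(\vartheta)^{\frac{1}{(1+\vartheta)(1-r)}}\sum_j[\cdots]\big)$. I would choose $\alpha_j:=(\rho{\mathsf{b}}_j)^{p}/\varpi$ with $\varpi>\rho^p\|\boldsymbol{\mathsf{b}}\|_{\ell^p}^p$, so that $\sum_j\alpha_j<1$. Each term of the remaining series is then a constant times ${\mathsf{b}}_j^{e}$ with exponent
\[
e=\frac{\frac{2\vartheta}{1+\vartheta}-rp}{1-r}=\frac{2\vartheta(1-\delta p)}{1+\vartheta-2\delta\vartheta}.
\]
Since $\boldsymbol{\mathsf{b}}\in\ell^p$, the series converges as soon as $e\ge p$. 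I expect verifying this condition to be the main step, and it reduces to elementary algebra: $e\ge p$ is equivalent to $2\vartheta-2\delta p\vartheta\ge p+p\vartheta-2\delta p\vartheta$, that is, $\vartheta\ge p/(2-p)$.

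It then remains to check that the choice \eqref{vartheta coro} satisfies $\tfrac12<\vartheta<\tfrac1{2\delta-1}$ together with $\vartheta\ge p/(2-p)$. If $p\le\tfrac23$, then $p/(2-p)\le\tfrac12<\omega$, so any $\omega$ in the stated interval works. If $p\in(\tfrac23,\tfrac1\delta)$, set $\vartheta=p/(2-p)$. Then $\vartheta>\tfrac12$, and $\vartheta<1/(2\delta-1)$ is equivalent to $p(2\delta-1)<2-p$, i.e. $p<1/\delta$, which is assumed. In both cases $S_{s,\vartheta}$ is bounded independently of $s$, hence $C_{s,\gamma,\vartheta}\le C_{\gamma,\vartheta}$. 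Using $\varphi(2^m)=n/2$ as before and absorbing the factor $2^{1/(2\vartheta)}$ into the constant yields \eqref{equ: QMC error coro}.
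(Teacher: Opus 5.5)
Your proof is correct. It reaches the same two constraints, $r<1$ (i.e.\ $\vartheta<\frac{1}{2\delta-1}$) and $\vartheta\ge\frac{p}{2-p}$, but by a different route from the paper's.

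You keep the H\"older/\cite[Lemma 6.3]{KuoSchwabSloan2012} machinery of Theorem~\ref{QMC theorem} and apply it directly with exponent $r=\delta q$, where $q:=\frac{2\vartheta}{1+\vartheta}$. You skip the preliminary step $\sum_i a_i\le\big(\sum_i a_i^{1/\delta}\big)^{\delta}$.

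The paper drops H\"older and the auxiliary $\alpha_j$ entirely. It groups the subsets by $\ell=|\mathfrak{u}|$ and bounds $\sum_{|\mathfrak{u}|=\ell}\prod_{j\in\mathfrak{u}}\mathsf{b}_j^{q}\le\frac{1}{\ell!}\big(\sum_j\mathsf{b}_j^{q}\big)^{\ell}$ by the multinomial theorem. This gives $S_{s,\vartheta}\le\sum_{\ell\ge0}(\ell!)^{r-1}x^{\ell}$ with $x=(\varrho_{\max}(\vartheta)\rho^{2\vartheta})^{\frac{1}{1+\vartheta}}\sum_j\mathsf{b}_j^{q}$. By the ratio test this series converges for every finite $x$ as soon as $r<1$. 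The only summability then needed is $\boldsymbol{\mathsf{b}}\in\ell^{q}$, i.e.\ $q\ge p$.

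Your condition $e\ge p$ is in fact the same thing: $e=\frac{q(1-\delta p)}{1-\delta q}$, so $e\ge p\iff q\ge p$.

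The paper's version is shorter and shows more transparently that the two requirements, factorial damping and $\ell^{q}$-summability, decouple, with no tuning of $\alpha_j$ or $\varpi$. Yours gives an explicit closed-form bound $(1-\sum_j\alpha_j)^{-r}\exp(\cdots)$ for $S_{s,\vartheta}$. It also makes clear that the rate loss in Theorem~\ref{QMC theorem} came solely from that preliminary $\ell^{1/\delta}$ step.
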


\begin{proof}
	{Following the proof of the previous theorem,
	we need to bound the quantity $S_{s,\vartheta}$ defined as  \eqref{S def}, independently of the dimension $s$. We have by \eqref{S def}, }
	\begin{align*}
		S_{s,\vartheta}
		&= {\sum_{\mathfrak{u} \subseteq { \{1:{s}\}}} ({\left|\mathfrak{u}\right|}!)^{\frac{2\vartheta\delta}{1+\vartheta}} \prod_{j\in \mathfrak{u}} \left(\rho{{\mathsf{b}}}_j\right)^{\frac{2\vartheta}{1+\vartheta}} \varrho_j(\vartheta)^{\frac{1}{1+\vartheta}}} \\
		&\leq \sum_{\ell=0}^s (\ell!)^{\frac{2\vartheta\delta}{1+\vartheta}} \left(\varrho_{\max}(\vartheta)\rho^{2\vartheta}\right)^{\frac{\ell}{1+\vartheta}} \sum_{\substack{\mathfrak{u} \subseteq { \{1:{s}\}} \\ {\left|\mathfrak{u}\right|}=\ell}} \prod_{j\in \mathfrak{u}} {{\mathsf{b}}}_j^{\frac{2\vartheta}{1+\vartheta}}.
	\end{align*}
	
	To bound the inner sum, we relate the sum over subsets to a sum over multi-indices ${\boldsymbol{\nu}} \in {\mathbb N}_0^s$. Since ${\boldsymbol{\nu}}! = 1$ whenever $\nu_j \in \{0,1\}$ for all $j$, we can write:
	\begin{align*}
		\sum_{\substack{\mathfrak{u} \subseteq { \{1:{s}\}} \\ {\left|\mathfrak{u}\right|}=\ell}} \prod_{j\in \mathfrak{u}} {{\mathsf{b}}}_j^{\frac{2\vartheta}{1+\vartheta}}
		&= \sum_{\substack{{\boldsymbol{\nu}} \in \{0,1\}^s \\ {\left|{\boldsymbol{\nu}}\right|}=\ell}} \frac{1}{{\boldsymbol{\nu}}!} \prod_{j=1}^s \left({{\mathsf{b}}}_j^{\frac{2\vartheta}{1+\vartheta}}\right)^{\nu_j} 
		\leq \frac{1}{\ell!} \sum_{\substack{{\boldsymbol{\nu}} \in {\mathbb N}_0^s \\ {\left|{\boldsymbol{\nu}}\right|}=\ell}} \frac{\ell!}{{\boldsymbol{\nu}}!} \prod_{j=1}^s \left({{\mathsf{b}}}_j^{\frac{2\vartheta}{1+\vartheta}}\right)^{\nu_j}.
	\end{align*}
	
Applying the multinomial theorem \eqref{eq:multinomial}, this expression simplifies exactly to $\frac{1}{\ell!} \left(\sum_{j=1}^s {\mathsf{b}}_j^{\frac{2\vartheta}{1+\vartheta}}\right)^\ell$. Substituting this back into the bound for $S_{s,\vartheta}$ and extending the outer sum to infinity, we obtain:
	\begin{align*}
		S_{s,\vartheta}
		&\leq \sum_{\ell=0}^s (\ell!)^{\frac{2\vartheta\delta}{1+\vartheta}-1} \left(\varrho_{\max}(\vartheta)\rho^{2\vartheta}\right)^{\frac{\ell}{1+\vartheta}} \left(\sum_{j=1}^s{\mathsf{b}}_j^{\frac{2\vartheta}{1+\vartheta}}\right)^\ell \\
		&\leq \sum_{\ell=0}^\infty (\ell!)^{\frac{2\vartheta\delta}{1+\vartheta}-1} \left(\varrho_{\max}(\vartheta)\rho^{2\vartheta}\right)^{\frac{\ell}{1+\vartheta}} \left(\sum_{j\in{\mathbb{N}}}{\mathsf{b}}_j^{\frac{2\vartheta}{1+\vartheta}}\right)^\ell.
	\end{align*}
	
	Since $F \in \mathcal{W}_{\boldsymbol{\gamma}}({\mathbb{R}}^s)$ requires $S_{s,\vartheta} < \infty$, we analyze the convergence of this series. Recall the necessary condition $p < \frac{1}{\delta}$. We proceed by analyzing two distinct cases based on the value of $p$:
	
	{For the first case where $p \in \left(\frac{2}{3}, \frac{1}{\delta}\right)$,}
	we set $\vartheta = \frac{p}{2-p}$, which implies $\frac{2\vartheta}{1+\vartheta} = p$. Since $p \in \left(\frac{2}{3}, \frac{1}{\delta}\right)$, it follows that $\vartheta \in \left(\frac{1}{2}, \frac{1}{2\delta-1}\right)$. The sum $\sum_{j\in{\mathbb{N}}}{\mathsf{b}}_j^p$ is finite because $\boldsymbol{\mathsf{b}} \in \ell^p$. In this case, the series admits the following upper bound
	\begin{align*}
		S_{s,\vartheta}
		\leq \sum_{\ell=0}^\infty (\ell!)^{p\delta-1} {\varrho_{\max}(\vartheta)}^{\frac{(2-p)\ell}{2}} \rho^{p\ell} \left(\sum_{j\in{\mathbb{N}}}{\mathsf{b}}_j^{p}\right)^\ell.
	\end{align*}
	Because $p\delta < 1$, the exponent of the factorial term is strictly negative ($p\delta - 1 < 0$). By  d'Alembert's ratio test for series convergence, the series in the right-hand side converges to a finite value independent of $s$.
	
	For the second case where $p \leq \frac{2}{3}$,
	we select $\vartheta \in \left(\frac{1}{2}, \frac{1}{2\delta-1} \right)$ such that the exponent $\frac{2\vartheta\delta}{1+\vartheta} - 1 < 0$.
	By the assumption\footnote{It is worth noting that the assumption $\delta < \frac{3}{2}$ is essential here, as the parameter $\vartheta$ governing the QMC error bound for rank-1 lattice rules in Lemma \ref{lemma:QMC error} is restricted to the interval $(\frac{1}{2},1]$.} $\delta < \frac{3}{2}$, we have $2\delta - 1 < 2$, implying $\frac{1}{2\delta-1} > \frac{1}{2}$. 
	Therefore, it is always possible to choose $\vartheta = \omega$ for some $\omega \in \left(\frac{1}{2}, \min\left\{1, \frac{1}{2\delta-1}\right\}\right) =(\tfrac{1}{2},\tfrac{2}{2\delta-1})$.
	For this choice of $\vartheta$, we have $\frac{2\vartheta}{1+\vartheta} > \frac{2(1/2)}{1+1/2} = \frac{2}{3} \geq p$, which guarantees $\sum_{j\in{\mathbb{N}}}{\mathsf{b}}_j^{\frac{2\vartheta}{1+\vartheta}} < \infty$ due to $\boldsymbol{\mathsf{b}} \in \ell^p$. 
	Recall $\frac{2\vartheta\delta}{1+\vartheta} < 1$, this makes the exponent of $\ell!$ negative. An application of  d'Alembert's  ratio test for series convergence confirms that $S_{s,\vartheta} < \infty$ and completes the proof. 
	\hfill
\end{proof}
\section{Total error}\label{sec: total error}
\subsection{Error analysis for finite element methods}
\label{subsec: FEM_error}
{%
	In this subsection, we address the spatial discretization of the parametric elliptic PDE \eqref{ellip} using {finite element methods (FEMs)}.

	We recall a result on approximation   by finite element methods, for detail see, e.g., \cite[Section 4.4]{Brenner2008}. Let $D \subset {\mathbb R}^d$ be a bounded polyhedral domain. 
	Let $r \in {\mathbb N}$ be given and ${\mathcal T}^h$, $0 < h \le 1$,  a regular subdivision of  $D$ with polyhedrons  $T$ such that
	$$
	\max_{T \in {\mathcal T}^h} \operatorname{diam} T \le h \operatorname{diam} D.
	$$
	Let $V_h$  be the finite element space which consists  of those continuous functions on $D$ that restrict to polynomials of order  at most $r$ on each polyhedron $T \in {\mathcal T}^h$. Let ${\mathcal I}_h: C(\bar{D}) \to L^1(D)$ be the global interpolation operator  associated with ${\mathcal T}^h$. 
	By \cite[Theorem 4.4.4]{Brenner2008}, there exists a constant $K_{D,r}$  such that  for any $v \in H^{r}(D)$
	\begin{equation} \label{norm{v- I v}{H^1}}
		\left\|{v- {\mathcal I}_h v}\right\|_{H^1(D)}
		\ \le \
		K_{D,r} \, h^{r-1}	{\left\|{v}\right\|_{H^{r}(D)}}.	
	\end{equation}

	For a given  $\boldsymbol{y} \in {{\mathbb R}^{\mathbb N}}$, the discrete solution $u_h(\boldsymbol{y}) \in V_h$ is defined as the unique solution to the following variational problem: find $u_h(\boldsymbol{y}) \in V_h$ such that
	$$
	\int_{D} a({\boldsymbol{y}}) \nabla u_h({\boldsymbol{y}}) \cdot \nabla v_h \, {{\rm d} {\boldsymbol{x}}}= \langle f, v_h \rangle, \quad \forall v_h \in V_h. 
	$${
		
		For numerical analysis of finite element approximation, we need some auxiliary results on the spatial higher regularity Sobolev norm of the parametric diffusion coefficient and weak solution.
		\begin{lemma}\label{lemma:norm{a(by)}{W^r_{infty,a(by)}}}
			Let $r\in {\mathbb N}$ and suppose that $Z(\cdot, {\boldsymbol{y}}) \in W^r_{\infty}(D)$ for all ${\boldsymbol{y}} \in Y^{\mathbb{N}}$. For the log-normal diffusion coefficient $a({\boldsymbol{y}}) := \exp(Z({\boldsymbol{y}}))$, we have the estimate
			\begin{equation*} 
				\begin{aligned}
					\left\|{a({\boldsymbol{y}})}\right\|_{W^r_{\infty,a({\boldsymbol{y}})}}
					&\le \
					C \left(1 + \left\|{Z({\boldsymbol{y}})}\right\|_{W^r_{\infty}}\right)^r \quad \forall {\boldsymbol{y}} \in Y^{\mathbb{N}},		
				\end{aligned}			
			\end{equation*}		
			where the constant $C > 0$ depends only on $r$ and $d$, and is independent of the parameter sequence ${\boldsymbol{y}}$.
		\end{lemma}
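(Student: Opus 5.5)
We need to bound $\max_{|\boldsymbol{\alpha}|\le r}\|\mathsf{D}^{\boldsymbol{\alpha}} a(\boldsymbol{y})\|_{\infty,a(\boldsymbol{y})^{-1}}$, i.e. the ratios $\mathsf{D}^{\boldsymbol{\alpha}} a/a$ in $L^\infty(D)$. The plan is to fix $\boldsymbol{y}$, write $a=\exp(Z)$ with $Z=Z(\cdot,\boldsymbol{y})\in W^r_\infty(D)$, and show by the multivariate Fa\`a di Bruno formula (or, equivalently, by induction on $|\boldsymbol{\alpha}|$) that $\mathsf{D}^{\boldsymbol{\alpha}} a = a\, P_{\boldsymbol{\alpha}}(Z)$. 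Here $P_{\boldsymbol{\alpha}}(Z)$ is a polynomial in the spatial derivatives $\mathsf{D}^{\boldsymbol{\gamma}}Z$, $\boldsymbol{0}\neq\boldsymbol{\gamma}\le\boldsymbol{\alpha}$. Its coefficients are nonnegative integers depending only on $\boldsymbol{\alpha}$, and each monomial has the form $\prod_{i=1}^{k}\mathsf{D}^{\boldsymbol{\gamma}_i}Z$ with $\boldsymbol{\gamma}_1+\dots+\boldsymbol{\gamma}_k=\boldsymbol{\alpha}$, hence $k\le|\boldsymbol{\alpha}|\le r$. The case $\boldsymbol{\alpha}=\boldsymbol{0}$ gives $P_{\boldsymbol{0}}=1$.

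For the induction, $\mathsf{D}^{\boldsymbol{\alpha}+\boldsymbol{e}}a=\mathsf{D}^{\boldsymbol{e}}(aP_{\boldsymbol{\alpha}})=a\bigl(\mathsf{D}^{\boldsymbol{e}}Z\,P_{\boldsymbol{\alpha}}+\mathsf{D}^{\boldsymbol{e}}P_{\boldsymbol{\alpha}}\bigr)$. The first term multiplies each monomial by one more first-order factor. The second applies the product rule inside a monomial, raising the order of one factor by one while keeping the number of factors. Both operations preserve the structure just described. This mirrors the parametric argument of Lemma~\ref{lemma:G-regularity-a(by)}, now in the spatial variable.

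The weight then cancels. Since $a>0$, $\|\mathsf{D}^{\boldsymbol{\alpha}}a\|_{\infty,a^{-1}}=\|P_{\boldsymbol{\alpha}}(Z)\|_\infty$. Each monomial is bounded by $\prod_i\|\mathsf{D}^{\boldsymbol{\gamma}_i}Z\|_\infty\le \|Z\|_{W^r_\infty}^k\le(1+\|Z\|_{W^r_\infty})^r$, using $k\le r$ and $1+t\ge \max(1,t)$. Summing over the finitely many monomials gives $\|P_{\boldsymbol{\alpha}}(Z)\|_\infty\le c_{\boldsymbol{\alpha}}(1+\|Z\|_{W^r_\infty})^r$, where $c_{\boldsymbol{\alpha}}$ is the sum of the coefficients of $P_{\boldsymbol{\alpha}}$. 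Taking $C:=\max_{|\boldsymbol{\alpha}|\le r}c_{\boldsymbol{\alpha}}$ yields the claim with $C$ depending only on $r$ and $d$, since the set of multi-indices $\boldsymbol{\alpha}\in{\mathbb N}_0^d$ with $|\boldsymbol{\alpha}|\le r$ depends only on $r$ and $d$.

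The only delicate point is rigor: with $Z\in W^r_\infty$ only, we have to justify the chain and product rules for weak derivatives. I would use that $W^1_\infty(D)$ functions are locally Lipschitz, so $\exp(Z)\in W^1_\infty$ with $\nabla e^Z=e^Z\nabla Z$. Products of $W^{j}_\infty$ functions obey the Leibniz rule. Iterating these facts in the induction above keeps every intermediate expression in the correct $W^{r-|\boldsymbol{\alpha}|}_\infty$ class. An alternative is to mollify $Z$ and pass to the limit, because the bound is stable under $W^r_\infty$-bounded smooth approximations converging almost everywhere with their derivatives.
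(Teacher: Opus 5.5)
Your proposal is correct and follows essentially the argument the paper relies on; the paper omits the proof and defers to \cite[Lemma 3.20]{DNSZ2023}. That argument writes $\mathsf{D}^{\boldsymbol{\alpha}}e^{Z}=e^{Z}P_{\boldsymbol{\alpha}}(Z)$ via a Leibniz/Fa\`a di Bruno induction in $\boldsymbol{x}$ (the spatial analogue of Lemma~\ref{lemma:G-regularity-a(by)}), cancels the weight, and bounds each monomial of degree at most $r$ by $(1+\|Z\|_{W^r_\infty})^r$.

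You were also right to read the weight as $a(\boldsymbol{y})^{-1}$, which is how the paper uses it later in the finite element estimate. With the paper's convention $\|v\|_{\infty,\lambda}=\operatorname{ess\,sup}|v|\lambda$, the literal subscript $a(\boldsymbol{y})$ would make the bound fail already for $\boldsymbol{\alpha}=\boldsymbol{0}$ and $Z$ a large positive constant.
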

		
		\begin{proof} 
			The proof follows closely the arguments presented in \cite[Lemma 3.20]{DNSZ2023}. We thus omit the details here.
			\hfill
		\end{proof}	
		
		For the proof of the following result see \cite[Corollary 3.19]{DNSZ2023}.
		
		\begin{lemma} \label{lemma:regularity} 
			Let $r \ge 2$ and assume that $D \subset {\mathbb R}^d$ is a bounded domain with either a $C^\infty$-boundary or a convex $C^{r-1}$-boundary. Let $a(\cdot, {\boldsymbol{y}})$ be the parametric diffusion coefficient given by \eqref{def:diffus-coeff}. Suppose that the uniform ellipticity condition \eqref{PDE-ellipticity} holds, $a(\cdot, {\boldsymbol{y}}) \in W^{r-1}_{\infty}(D)$ for all ${\boldsymbol{y}} \in Y^{\mathbb{N}}$, and the source term satisfies $f \in H^{r - 2}(D)$. Then, the corresponding weak solution $u(\cdot, {\boldsymbol{y}})$ to \eqref{ellip} belongs to $H^r(D)$ and satisfies the bound
			\begin{equation*} \label{eq:u_Hr_bound} 
				\left\|{u({\boldsymbol{y}})}\right\|_{H^r(D)} 
				\le \
				C_{d,r} \left\|{f}\right\|_{H^{r - 2}_{a({\boldsymbol{y}})}}
				\left(1 + \left\|{a({\boldsymbol{y}})}\right\|_{W^{r -1}_{\infty,a({\boldsymbol{y}})}}\right)^{r-1} \quad \forall {\boldsymbol{y}} \in Y^{\mathbb{N}},
			\end{equation*}
			where $C_{d,r} > 0$ is a constant depending only on $d$ and $r$.
		\end{lemma}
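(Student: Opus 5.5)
The plan is classical elliptic shift regularity, run with explicit tracking of the coefficient so that the constant depends on $a({\boldsymbol{y}})$ only through $\|a({\boldsymbol{y}})\|_{W^{r-1}_{\infty,a({\boldsymbol{y}})}}$. I fix ${\boldsymbol{y}}$ and write $a=a({\boldsymbol{y}})$, $u=u({\boldsymbol{y}})$. Since the statement is the one in \cite[Corollary 3.19]{DNSZ2023}, the proof can simply cite it. Below I describe how I would reconstruct it.

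\emph{Base case $r=2$.} The idea is to rewrite the equation as $-\Delta u = (f+\nabla a\cdot\nabla u)/a = f/a + \nabla Z\cdot\nabla u$. Here I use $\nabla a/a=\nabla Z$ and the fact that $a$ is bounded below. I then apply the $H^2$ shift theorem for the Dirichlet Laplacian, valid for convex or smooth $D$:
\[
\|u\|_{H^2}\le C\bigl(\|f/a\|_{L^2}+\|\nabla a\|_{\infty,a^{-1}}\|\nabla u\|_{L^2}\bigr).
\]
For the first term, $\|f/a\|_{L^2}$ is controlled by the weighted norm $\|f\|_{H^0_{a}}$. This is where I read the weight convention as making $\|f\|_{H^{r-2}_a}$ absorb the factor $a^{-1}$; I will check that the weights match exactly. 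For the second term, the energy estimate \eqref{weak-form2} from the proof of Theorem~\ref{thm:G-regularity-u(by)} controls $\|\nabla u\|_{L^2}$. Combining the two gives the bound with exponent $r-1=1$.

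\emph{Induction $r\to r+1$.} I differentiate the identity $-\Delta u = f/a+\nabla Z\cdot\nabla u$ up to order $r-1$. This gives
\[
\|\Delta u\|_{H^{r-1}}\le \|f/a\|_{H^{r-1}} + C\|\nabla Z\|_{W^{r-1}_\infty}\|u\|_{H^{r}}.
\]
I then apply the higher-order shift theorem for the Laplacian, which uses the $C^{r}$ boundary (convex $C^{r-1}$ at the top level), and insert the induction hypothesis for $\|u\|_{H^r}$. Each step contributes one more factor $(1+\|a\|_{W^{r}_{\infty,a}})$. The Leibniz and Fa\`a di Bruno formulas express the derivatives of $f/a$ and of $Z=\log a$ through $\mathsf D^{\boldsymbol\alpha}a/a$. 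This is the same computation as in Lemma~\ref{lemma:norm{a(by)}{W^r_{infty,a(by)}}}, run in the reverse direction. As a result both $\|\nabla Z\|_{W^{r-1}_\infty}$ and the weighted norm of $f/a$ are bounded by powers of $1+\|a\|_{W^{r}_{\infty,a}}$.

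\emph{Main obstacle.} The delicate step is the bookkeeping of these powers: I must check that the total exponent is exactly $r-1$ and that $C_{d,r}$ does not depend on ${\boldsymbol{y}}$. The natural way to keep the exponent at $r-1$ is to absorb products of lower-order coefficient norms via $\prod(1+x_i)\le (1+\max x_i)^{k}$, together with the weighted-norm convention. If this bookkeeping produces a larger exponent, I would defer to the careful count in \cite[Corollary 3.19]{DNSZ2023}, as the paper itself does.
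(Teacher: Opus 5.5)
Citing \cite[Corollary 3.19]{DNSZ2023} is all the paper does, so at that level you match it. The reconstruction you sketch, however, does not prove the displayed bound. It breaks at ``Combining the two gives the bound with exponent $r-1=1$''. Carried out literally, your base case gives
\begin{equation*}
\|u\|_{H^2}\le C\bigl(\|f/a\|_{L^2}+\|\nabla Z\|_{\infty}\,\|f\|_{V'}\,\|a^{-1}\|_{\infty}\bigr)\le C\,\|a^{-1}\|_{\infty}\,\|f\|_{L^2}\bigl(1+\|\nabla Z\|_{\infty}\bigr).
\end{equation*}
The factor $\|a^{-1}\|_\infty\|f\|_{L^2}$ is not dominated, with a ${\boldsymbol{y}}$-independent constant, by any of the candidate weighted norms of $f$:
\begin{itemize}
\item not by $(\int_D f^2a)^{1/2}$, the norm as printed;
\item not by $(\int_D f^2a^{-1})^{1/2}$, the weight used later in \eqref{norm{u- u_h}{V}-a};
\item not even by the scale-invariant $\|f/a\|_{L^2}$.
\end{itemize}
Passing from one to the other costs the contrast $\|a\|_\infty\|a^{-1}\|_\infty$. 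On a bounded domain, $\|a\|_{W^{r-1}_{\infty,a^{-1}}}$ controls this contrast only exponentially, through $\exp(C_D\|\nabla Z\|_\infty)$.

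This is not a bookkeeping issue that deferring to the citation will fix: the inequality as printed is false.
\begin{itemize}
\item Constant coefficient. For $a\equiv c$ the left side equals $c^{-1}\|u_1\|_{H^r}$, where $-\Delta u_1=f$. The right side scales like $c^{1/2}$ with weight $a$, or like $c^{-1/2}$ with weight $a^{-1}$. Let $c\to 0$.
\item Scale-invariant reading. Take $D=\{|\boldsymbol{x}|<3\}\subset\mathbb{R}^2$ and $Z=L\min\{1,\max\{0,2-|\boldsymbol{x}|\}\}$. Let $f=e^L$ on $\{|\boldsymbol{x}|<1\}$ and $f=0$ elsewhere. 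Then $\|f/a\|_{L^2}=\sqrt{\pi}$ and $\|a\|_{W^1_{\infty,a^{-1}}}=\max\{1,L\}$. The radial solution satisfies $|\boldsymbol{x}|\,a\,\partial_r u=-e^L/2$ for $|\boldsymbol{x}|\ge 1$. Hence $u=\frac{e^L}{2}\ln(3/|\boldsymbol{x}|)$ on $2\le|\boldsymbol{x}|\le 3$, and $\|\nabla u\|_{L^2}\ge(\frac{\pi}{2}\ln\frac32)^{1/2}e^L$. The right-hand side is only $O(1+L)$, or $O(e^{L/2}(1+L))$ with the $(\int_D f^2a^{-1})^{1/2}$ reading.
\end{itemize}

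What your argument does prove, once the induction is set up correctly, is
\begin{equation*}
\|u({\boldsymbol{y}})\|_{H^r(D)}\le C_{D,r}\,\|a({\boldsymbol{y}})^{-1}\|_{\infty}\,\|f\|_{H^{r-2}}\bigl(1+\|a({\boldsymbol{y}})\|_{W^{r-1}_{\infty,a({\boldsymbol{y}})^{-1}}}\bigr)^{r-1}.
\end{equation*}
The constant must also depend on $D$, through the Poincar\'e and shift-theorem constants.

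Your one-line recursion $\|\Delta u\|_{H^{r-1}}\le\|f/a\|_{H^{r-1}}+C\|\nabla Z\|_{W^{r-1}_\infty}\|u\|_{H^r}$ also overshoots the exponent. Set $A:=\|a\|_{W^{r}_{\infty,a^{-1}}}$. An order-$m$ derivative of $Z=\log a$ is a polynomial of degree $m$ in the ratios $\mathsf{D}^{\boldsymbol{\alpha}}a/a$; for example $\partial_{ij}Z=\partial_{ij}a/a-\partial_i a\,\partial_j a/a^2$. So $\|\nabla Z\|_{W^{r-1}_\infty}$ is of order $(1+A)^{r}$, not $1+A$. Each step of your recursion therefore multiplies by $(1+A)^r$, and the total exponent becomes $r(r-1)/2$.

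To get $r-1$, expand $\mathsf{D}^{\boldsymbol{\gamma}}(\nabla Z\cdot\nabla u)$ fully by Leibniz. Bound each term $\|\mathsf{D}^{\boldsymbol{\gamma}'}\nabla Z\|_\infty\|\mathsf{D}^{\boldsymbol{\gamma}''}\nabla u\|_{L^2}$ by $C(1+A)^{|\boldsymbol{\gamma}'|+1}$ times the induction hypothesis at level $|\boldsymbol{\gamma}''|+1$, which carries exponent $|\boldsymbol{\gamma}''|$. At level $1$ this is the energy bound $\|\nabla u\|_{L^2}\le\|f\|_{V'}\|a^{-1}\|_\infty$. Every term then has degree at most $|\boldsymbol{\gamma}|+1\le r$ at level $r+1$, as required.
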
 
	}

	By combining \eqref{norm{v- I v}{H^1}} with C\'ea's Lemma and the regularity results from Lemma~\ref{lemma:regularity}, we obtain the following bound for all ${\boldsymbol{y}} \in {{\mathbb R}^{\mathbb{N}}}$:
	\begin{equation} \label{norm{u- u_h}{V}-a}
		\left\|{u({\boldsymbol{y}})- u_h({\boldsymbol{y}})}\right\|_{V}
		\leq
		C_{d,r} K_{D,r}\,
		h^{r-1} \left\|{f}\right\|_{H^{r - 2}_{a({\boldsymbol{y}})^{-1}}}
		\left(1 + \left\|{a({\boldsymbol{y}})}\right\|_{W^{r -1}_{\infty,a({\boldsymbol{y}})^{-1}}}\right)^{r-1}
	\end{equation}
	which by Lemma~\ref{lemma:norm{a(by)}{W^r_{infty,a(by)}}} can be rewritten as 
	\begin{equation} \label{norm{u- u_h}{V}-Z}
		\left\|{u({\boldsymbol{y}})- u_h({\boldsymbol{y}})}\right\|_{V}
		\leq
		C_{d,r} K_{D,r}\,
		h^{r-1} \left\|{f}\right\|_{H^{r - 2}_{a({\boldsymbol{y}})^{-1}}}
		\left(1 + \left\|{Z({\boldsymbol{y}})}\right\|_{W^{r -1}_{\infty,a({\boldsymbol{y}})^{-1}}}\right)^{r-1}.
	\end{equation}
	We now employ these established pointwise error bounds  to estimate the {numerical integration error over the dimension truncated parameter space ${\mathbb R}^s$.} The following corollary provides the corresponding bound for the finite element approximation.
	\begin{corollary}\label{corollary:FEM error}
		Suppose the assumptions of Lemma~\ref{lemma:regularity} hold. Let $G \in H^{-1}(D)$ be a bounded linear functional, and define the integral operator $I_s : V \to {\mathbb R}$ by
		\begin{equation*}
			I_s(w) := \int_{{\mathbb R}^s} G\big(w(\boldsymbol{y}_s)\big) {\rm d} {\boldsymbol{\mu}}_s({\boldsymbol{y}}_s).
		\end{equation*}
		Assume further that there exists a uniform constant $C_{\mathrm{FEM}} > 0$ such that:\\
		\resizebox{\textwidth}{!}{
			\begin{minipage}{\textwidth}
				\begin{align}\label{cond1}
					C_{\mathrm{FEM}} := C_{d,r} K_{D,r} \left\|{G}\right\|_{H^{-1}(D)} \sup_{s\in{\mathbb{N}}}\int_{{\mathbb R}^s} \left\|{f}\right\|_{H^{r - 2}_{a({\boldsymbol{y}}_s)^{-1}}}
					\left(1 + \left\|{a({\boldsymbol{y}}_s)}\right\|_{W^{r -1}_{\infty,a({\boldsymbol{y}}_s)^{-1}}}\right)^{r-1} {\rm d} {\boldsymbol{\mu}}_s({\boldsymbol{y}}_s) < \infty.
				\end{align}
			\end{minipage}
		}\\
		Then, we have the weak error estimate
		\begin{equation}\label{FEM weakly error bound}
			{\left|I_s(u-u_h)\right|} \leq C_{\mathrm{FEM}} \, h^{r-1}.
		\end{equation}
	\end{corollary}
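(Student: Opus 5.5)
The estimate will follow by linearity of $G$, the pointwise finite element bound \eqref{norm{u- u_h}{V}-a}, and integration over ${\mathbb R}^s$ against ${\boldsymbol{\mu}}_s$. First, $I_s$ is linear in its argument, so
\[
I_s(u-u_h)=\int_{{\mathbb R}^s} G\big(u({\boldsymbol{y}}_s)-u_h({\boldsymbol{y}}_s)\big)\,{\rm d}{\boldsymbol{\mu}}_s({\boldsymbol{y}}_s).
\]
Before estimating, we note that this integral is well defined: the maps ${\boldsymbol{y}}_s\mapsto u({\boldsymbol{y}}_s)$ and ${\boldsymbol{y}}_s\mapsto u_h({\boldsymbol{y}}_s)$ are continuous (indeed smooth) into $V$, because they solve the problems \eqref{ellip sub-prob} with $X=V$ and $X=V_h$ respectively, and Theorem \ref{thm:G-regularity-u(by)} applies to both. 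Hence the integrand is measurable. Integrability will follow from the bound derived below together with \eqref{cond1}.

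Next, for each fixed ${\boldsymbol{y}}_s$ we use the duality bound
\[
\big|G(w)\big|\le \|G\|_{H^{-1}(D)}\,\|w\|_{H^1_0(D)} .
\]
Here the $H^1_0$ norm is identified with the $V$-norm $\|\nabla w\|_2$; if $H^{-1}$ is normed against the full $H^1$ norm instead, the change only affects constants through the Poincaré inequality. We then insert the pointwise estimate \eqref{norm{u- u_h}{V}-a}, evaluated at ${\boldsymbol{y}}={\boldsymbol{y}}_s$. That estimate was obtained from Céa's lemma, the interpolation bound \eqref{norm{v- I v}{H^1}}, and Lemma \ref{lemma:regularity}, whose hypotheses are assumed to hold at every ${\boldsymbol{y}}_s$.

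One point needs care. Céa's lemma in the energy norm gives $\|u-u_h\|_{H^1_{a}}\le \inf_{v_h}\|u-v_h\|_{H^1_a}$, and converting to the $V$-norm introduces the factor $\big(a_{\max}({\boldsymbol{y}}_s)/a_{\min}({\boldsymbol{y}}_s)\big)^{1/2}$. We take this factor to be absorbed into the weighted norms appearing in \eqref{norm{u- u_h}{V}-a}, as in the paper's statement of that bound. With this understanding, \eqref{norm{u- u_h}{V}-a} is used exactly as stated.

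Finally, we take absolute values, move the modulus inside the integral, and pull out $h^{r-1}$:
\[
|I_s(u-u_h)|\le C_{d,r}K_{D,r}\|G\|_{H^{-1}}\,h^{r-1}\int_{{\mathbb R}^s}\|f\|_{H^{r-2}_{a({\boldsymbol{y}}_s)^{-1}}}\big(1+\|a({\boldsymbol{y}}_s)\|_{W^{r-1}_{\infty,a({\boldsymbol{y}}_s)^{-1}}}\big)^{r-1}{\rm d}{\boldsymbol{\mu}}_s .
\]
Bounding the integral by its supremum over $s$ and recalling the definition \eqref{cond1} gives \eqref{FEM weakly error bound} with constant $C_{\mathrm{FEM}}$, independent of $s$ and $h$.

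The main obstacle is the careful justification of the pointwise bound, that is, the $y$-dependence of the Céa constant. Apart from that, the argument is a direct integration of the pointwise estimate.
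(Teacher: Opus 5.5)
Your argument is correct and is essentially the paper's proof: the duality bound $|G(w)|\le\|G\|_{H^{-1}(D)}\|w\|_V$, the pointwise estimate \eqref{norm{u- u_h}{V}-a} evaluated at ${\boldsymbol{y}}_s$, integration against ${\boldsymbol{\mu}}_s$, and then the supremum over $s$ built into \eqref{cond1}. Your side remarks on the ${\boldsymbol{y}}$-dependent C\'ea constant and on measurability go beyond what the paper addresses. Note, however, that invoking Theorem~\ref{thm:G-regularity-u(by)} for measurability relies on a Gevrey hypothesis on $a$ that is not among the corollary's assumptions; the paper simply takes measurability for granted.
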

	
	\begin{proof}
		From the definition of $I_s$ and standard properties of the duality pairing, we first observe that
		\begin{align*}
			{\left|I_s(u-u_h)\right|}
			&= {\left|\int_{{\mathbb R}^s} G \big(u(\boldsymbol{y}_s)-u_h(\boldsymbol{y}_s)\big) {\rm d} {\boldsymbol{\mu}}_s({\boldsymbol{y}}_s) \right|} \\
			&\leq \int_{{\mathbb R}^s} \left\|{G}\right\|_{H^{-1}(D)} \left\|{u(\boldsymbol{y}_s)-u_h(\boldsymbol{y}_s)}\right\|_{V} {\rm d} {\boldsymbol{\mu}}_s({\boldsymbol{y}}_s). 
		\end{align*}
		Applying the a priori error estimate \eqref{norm{u- u_h}{V}-a} to bound 
		{the norm $\left\|{u(\boldsymbol{y}_s)-u_h(\boldsymbol{y}_s)}\right\|_{V}$}  yields\\
		\resizebox{\textwidth}{!}{
			\begin{minipage}{\textwidth}
				\begin{align*}
					{\left|I_s(u-u_h)\right|}
					&\leq C_{d,r} K_{D,r} h^{r-1} \left\|{G}\right\|_{H^{-1}(D)} \int_{{\mathbb R}^s} \left\|{f({\boldsymbol{y}}_s)}\right\|_{H^{r - 2}_{a({\boldsymbol{y}}_s)^{-1}}} \left(1 + \left\|{a({\boldsymbol{y}}_s)}\right\|_{W^{r -1}_{\infty,a({\boldsymbol{y}}_s)^{-1}}}\right)^{r-1} {\rm d} {\boldsymbol{\mu}}_s({\boldsymbol{y}}_s).
				\end{align*}
			\end{minipage}
		}\\
		Finally, taking the supremum over $s \in {\mathbb{N}}$ and invoking the uniform bound \eqref{cond1}, {we deduce  \eqref{FEM weakly error bound}}, which completes the proof.
		\hfill
	\end{proof}
}

\subsection{Total error}\label{subsec: total error}
In this section, we focus on the numerical evaluation of {infinite-dimensional} integrals associated with {the weak solution $u({\boldsymbol{y}})$ to  \eqref{ellip}}. A typical example includes
\begin{equation}\label{QoI def}
	I(u)=\int_{\mathbb{R}^{\mathbb{N}}} G(u({\boldsymbol{y}})) {\boldsymbol{\mu}}({\rm d}{\boldsymbol{y}}) \in \mathbb{R},
\end{equation}
where $G \in H^{-1}(D)$ is a bounded linear functional {and ${\boldsymbol{\mu}}$ is the normal infinite dimensional probability measure  on ${\mathbb R}^{{\mathbb{N}}}$, together with its density function, defined as in Section \ref{sec:Error Analysis for Quasi-Monte Carlo methods}}.

In practice, evaluating the exact integrals in \eqref{QoI def} is computationally infeasible. Therefore, a 
{feasible} fully discrete approximation requires three intertwining steps: {first, the infinite-dimensional integral $I(u)$ is truncated to an $s$-dimensional integral $I_s(u)$; second, the integral $I_s(u)$ is approximated by the integral $I_s(u_h)$ with the finite element approximation $u_h({\boldsymbol{y}}_s)$ of the weak solution $u({\boldsymbol{y}}_s)$;  and finally, an $n$-point {quadrature} rule -- such as a QMC rule -- is applied for approximation of the integral $I_s(u_h)$.}
Applying these three approximations concurrently, the target {integral} $I(u)$ is {approximately replaced} by $Q^{\Delta}_{s,n}(u_h)$. {Consequently,  the total approximation error is split into three  component errors:}
\begin{align}\label{total err decomp}
	{\left|I(u)-Q^{\Delta}_{s,n}(u_h)\right|}
	\leq
	{\left|(I-I_s)(u)\right|}
	+
	{\left|I_s(u-u_h)\right|}
	+
	{\left|(I_s-Q^{\Delta}_{s,n})(u_h)\right|}
\end{align}

To properly characterize the parameter domain for these evaluations, we introduce a sequence of functions $\boldsymbol{g}(\boldsymbol{y})=\left\{g_j(y_j)\right\}_{j\in{\mathbb{N}}}$ and a positive summable sequence $\boldsymbol{\mathsf{b}}=\left\{{\mathsf{b}}_j\right\}_{j\in{\mathbb{N}}}\in\ell^1({\mathbb{N}})$. We then define the admissible parameter set as
\begin{equation*}
	U_{\boldsymbol{g},\boldsymbol{\mathsf{b}}} := \left\{ {\boldsymbol{y}} \in \mathbb{R}^{\mathbb{N}} : \sum_{j \in \mathbb{N}} {\mathsf{b}}_j\,{\left|g_j(y_j)\right|}  <\infty \right\}.
\end{equation*}

To establish rigorous bounds for the dimensional truncation and spatial discretization errors, {recalling  the diffusion coefficient of the elliptic PDE \eqref{ellip} of the form $a({\boldsymbol{y}})=\exp\left(Z({\boldsymbol{y}})\right)$,} we work under the following assumptions:

\begin{enumerate}[label={(A\arabic*)}]
	{
	\item \label{assum:A1} 
	Let $r \ge 2$ and assume that $D \subset {\mathbb R}^d$ is a bounded domain with either a $C^\infty$-boundary or a convex $C^{r-1}$-boundary.  Suppose that the uniform ellipticity condition \eqref{PDE-ellipticity} holds, $a({\boldsymbol{y}}) \in W^{r-1,\infty}(D)$ and $f({\boldsymbol{y}}) \in H^{r - 2}(D)$ for all ${\boldsymbol{y}} \in {{\mathbb R}^{\mathbb{N}}}$. 
	\item \label{assum:A2} 
	 {Assume that for all $s \in {\mathbb N}$,
	there holds the condition \eqref{cond1}.}
}
\item \label{assum:A3} { {Assume that there exists a sequence of positive numbers $\boldsymbol{\mathsf{b}} = ({\mathsf{b}}_j)_{j \in {\mathbb N}}$ such that} the the function $Z(\boldsymbol{y})$ satisfies the bound
	$$
	\left\|{{Z({\boldsymbol{y}})}}\right\|_{{\infty}} 
	\le \sum_{j \in \mathbb{N}} {\mathsf{b}}_j {\left|g_j(y_j)\right|},
	$$
	where $(g_j)_{j \ge 1}$ is a sequence of functions satisfying
	$$
	\int_{{\mathbb R}^{\mathbb{N}}} \exp\left(\sum_{j \in \mathbb{N}}{\mathsf{b}}_j{\left|g_j(y_j)\right|}\right) {\boldsymbol{\mu}} ({\rm d} \boldsymbol{y}) < \infty.
	$$
	}
\item \label{assum:A4} {Suppose that $\boldsymbol{\mathsf{b}} = ({\mathsf{b}}_j)_{j \ge 1} \in \ell^p(\mathbb{N})$ for some $p \in (0, 1)$, {and} $0 \le {\mathsf{b}}_j < 1$ for all $j \in {\mathbb{N}}$. Furthermore, assume the existence of a constant $\delta \ge 1$ and a sequence of positive functions ${\boldsymbol{\alpha}}(\boldsymbol{y}) = (\alpha_j(y_j))_{j \in {\mathbb{N}}}$ such that the parametric derivatives of $Z : D \times U_{\boldsymbol{g},\boldsymbol{\mathsf{b}}}  \to {\mathbb{R}}$ satisfy 
	\begin{equation}\label{Assump_Z_bound} 
		\left\|{\partial^{\boldsymbol{\nu}} Z({\boldsymbol{y}})}\right\|_{{\infty}} \le  {\boldsymbol{\alpha}}(\boldsymbol{y})^{\boldsymbol{\nu}} \boldsymbol{\mathsf{b}}^{\boldsymbol{\nu}} (|\boldsymbol{\nu}|!)^{\delta},
	\end{equation}
	for all $\boldsymbol{y} \in U_{\boldsymbol{g},\boldsymbol{\mathsf{b}}}$ and multi-indices $\boldsymbol{\nu} \in \mathcal F\setminus\{\boldsymbol{0}\}$.}

	\item \label{assum:A5} {Assume}
	$$C_f :=
	\left\|{f}\right\|_{V'}
	\prod_{j \in \mathbb{N}}
	{\int_{\mathbb{R}} \max_{t\in[0,1]}  \varsigma_j(t \xi_j)  \mu_j({\rm d}\xi_j)} < \infty
	$$
	{
	where
	$$
	{\varsigma_j}(y_j) := \exp({\mathsf{b}}_j {\left|g_j(y_j)\right|}) ,\quad \forall j\in{\mathbb{N}}.
	$$
}
	\item \label{assum:A6} {There exist an integer $k\geq 2$ such that}
	$$
	M_k:=
	\max_{j\in{\mathbb{N}}}
	\max_{1\leq m\leq k+1} \int_{\mathbb{R}} \max_{t\in[0,1]}  \varsigma_j(t \xi_j) \alpha_j(t \xi_j)^m {\left|\xi_j\right|}^m \mu_j({\rm d}\xi_j)
	<\infty.
	$$
and an integer $s$ such that $\left\|{\overline{\boldsymbol{\mathsf{b}}}_s}\right\|_{\ell^1} \leq \frac{1}{2} M_k^{-1}$.
	\item \label{assum:A7} {Assume}
	$$
	\rho:=
	 \sup_{j\in {\mathbb{N}}} \left(
		\int_{{\mathbb{R}}} \varsigma_j(\xi_j)^2\alpha_j (\xi_j)^2 \psi_j^2(\xi_j) {{d}\xi_j}
	\right)^{\frac{1}{2}}<\infty
	$$
	where the weight function $\psi_j$ defined in \eqref{psi def}.
\end{enumerate}

\begin{remark}
	A seemingly more general formulation of Assumption \ref{assum:A4} would allow for a ${\boldsymbol{y}}$-independent constant $\rho \ge 1$, such that the parametric derivatives of $Z$ satisfy
	\begin{equation*}
		\left\|{\partial^{\boldsymbol{\nu}} Z({\boldsymbol{y}})}\right\|_{{\infty}} \le \rho\, \widehat{{\boldsymbol{\alpha}}}(\boldsymbol{y})^{\boldsymbol{\nu}} \boldsymbol{\mathsf{b}}^{\boldsymbol{\nu}} (|\boldsymbol{\nu}|!)^{\delta}, \quad \forall \boldsymbol{\nu} \in \mathcal F\setminus\{\boldsymbol{0}\},
	\end{equation*}
	for some sequence of positive functions $\widehat{{\boldsymbol{\alpha}}}(\boldsymbol{y}) = (\widehat \alpha_j(y_j))_{j \in {\mathbb{N}}}$. However, by Lemma \ref{lemma:equiv defs} and defining the scaled sequence 
	${{\boldsymbol{\alpha}}(\boldsymbol{y})} := \left(\rho \widehat \alpha_j(y_j)\right)_{j\in{\mathbb{N}}}$, we can absorb the constant $\rho$ and directly recover the exact form of Assumption \ref{assum:A4}.
\end{remark}

\begin{remark}
	Suppose that the function $Z({\boldsymbol{y}})$ takes the separable form 
	\begin{equation}\label{eq:Z_form}
		Z({\boldsymbol{y}})({\boldsymbol{x}}) = \sum_{j \in \mathbb{N}} \psi_j({\boldsymbol{x}}) g_j(y_j),
	\end{equation}
	where $\left\|{\psi_j}\right\|_{L^\infty(D)} \leq {\mathsf{b}}_j$ and $g_j \in G^{\delta}({\mathbb{R}},{\mathbb{R}})$. By the definition of the Gevrey class, there exist functions $\kappa_j(y_j) \geq 1$ and $\gamma_j(y_j) > 0$ such that
	\begin{equation*}
		{\left|g_j^{(k)}(y_j)\right|} \leq \kappa_j(y_j) \gamma_j(y_j)^{k} {\mathsf{b}}_j^k (k!)^{\delta} \quad \text{for all } k \in {\mathbb{N}}_0.
	\end{equation*}
	Thanks to Lemma \ref{lemma:equiv defs}, we define $\alpha_j(y_j) := \kappa_j(y_j) \gamma_j(y_j)$, which allows us to simplify the derivative bound to
	\begin{equation*}
		{\left|g_j^{(k)}(y_j)\right|} \leq \alpha_j(y_j)^{k} {\mathsf{b}}_j^k (k!)^{\delta} \quad \text{for all } k \in {\mathbb{N}}_0.
	\end{equation*}
	Consequently, any function $Z({\boldsymbol{y}})$ admitting the representation \eqref{eq:Z_form} naturally satisfies Assumption \ref{assum:A4}.
\end{remark}

\begin{remark}	
	Alternatively, rather than imposing conditions on the exponent $Z({\boldsymbol{y}})$, one can formulate analogous assumptions directly on the diffusion coefficient $a({\boldsymbol{y}})$.
	
	\begin{enumerate}[label={(A\arabic*')},start=3]
		\item \label{assum:A3'} Assume that the diffusion coefficient satisfies the bound
		$$
		\left\|{a({\boldsymbol{y}})^{-1}}\right\|_{{\infty}} \le \exp\left( \sum_{j\geq 1} {\mathsf{b}}_j {\left|g_j(y_j)\right|}\right),
		$$
		where $(g_j)_{j \ge 1}$ is a sequence of functions fulfilling the Integrability condition
		$$
		\int_{{\mathbb R}^{\mathbb{N}}} \exp\left(\sum_{j \in \mathbb{N}}{\mathsf{b}}_j{\left|g_j(y_j)\right|}\right) {\boldsymbol{\mu}} ({\rm d} \boldsymbol{y}) < \infty.
		$$
		
		\item \label{assum:A4'} Suppose that $\boldsymbol{\mathsf{b}} = ({\mathsf{b}}_j)_{j \ge 1} \in \ell^p(\mathbb{N})$ for some $p \in (0, 1)$, with $0 \le {\mathsf{b}}_j < 1$ for all $j \in {\mathbb{N}}$. Furthermore, assume the existence of a constant $\delta \ge 1$ and a sequence of positive functions ${\boldsymbol{\beta}}(\boldsymbol{y}) = (\beta_j(y_j))_{j \in {\mathbb{N}}}$ such that the mapping $a : D \times U_{\boldsymbol{g},\boldsymbol{\mathsf{b}}} \to {\mathbb{R}}$ satisfies 
		\begin{equation}\label{Assump_a_bound} 
			\left\|{\partial^{\boldsymbol{\nu}} a({\boldsymbol{y}})}\right\|_{{\infty},a(\boldsymbol{y})^{-1}} \le  {\boldsymbol{\beta}}(\boldsymbol{y})^{\boldsymbol{\nu}} \boldsymbol{\mathsf{b}}^{\boldsymbol{\nu}} (|\boldsymbol{\nu}|!)^{\delta},
		\end{equation}
		for all $\boldsymbol{y} \in U_{\boldsymbol{g},\boldsymbol{\mathsf{b}}}$ and multi-indices $\boldsymbol{\nu} \in \mathcal F$.
	\end{enumerate} 
	
	Since $a({\boldsymbol{y}}) = \exp(Z({\boldsymbol{y}}))$, it is straightforward to see that Assumption~\ref{assum:A3'} follows directly from Assumption~\ref{assum:A3}. Moreover, by invoking Lemma~\ref{lemma:G-regularity-a(by)}, Assumption~\ref{assum:A4'} emerges as a natural consequence of Assumption~\ref{assum:A4} with  $\beta_j=4 \alpha_j$ for all $j\in {\mathbb{N}}$.
	
	We emphasize that Assumption~\ref{assum:A4'} significantly relaxes Assumption (A1) in \cite{GuthKaarnioja2025}. While their setting requires the bounds on the derivatives of the diffusion coefficient $a$ to be uniform with respect to ${\boldsymbol{y}}$, conditions \ref{assum:A3'} and \ref{assum:A4'} herein allow these bounds to depend explicitly on the parameter ${\boldsymbol{y}}$.
\end{remark}

With these assumptions in place, the following theorem establishes the comprehensive error bound for our QMC integration scheme.

\begin{theorem} \label{thm:total-error}
	Let the diffusion coefficient be given by $a({\boldsymbol{y}}) = \exp(Z({\boldsymbol{y}}))$. Suppose that Assumptions \ref{assum:A1}--\ref{assum:A7} hold 
		(Assumptions \ref{assum:A3} and \ref{assum:A4} may be alternatively replaced with  Assumptions \ref{assum:A3'} and \ref{assum:A4'}, respectively). Then, for a given dimension truncation parameter $s$ and a uniform finite element mesh of size $h$, there exists an $n$-point QMC quadrature rule, constructed via a CBC algorithm, such that the total error satisfies the bound
	\begin{equation} \label{eq:total_error_bound_thm}
		\sqrt{ \mathrm{E}_\Delta | I(u) - Q^\Delta_{s,n}({{}u_h}) |^2 } 
		\leq 
		C_{\mathrm{total}}\left( {s^{-\tau}} + {h^{r-1}} + n^{-\frac{1}{2\vartheta}} \right),
	\end{equation}
	where the total error constant is defined by
	$$
	C_{\mathrm{total}} := \sqrt{2}\max\left\{C_{\delta}, C_{\mathrm{FEM}}, C_{\gamma, \vartheta}^{1/2}\right\}.
	$$
	Here, the dimension truncation rate $\tau$ and the QMC quadrature rate $\vartheta$ are given by \eqref{gamma def} and \eqref{vartheta}, respectively.
\end{theorem}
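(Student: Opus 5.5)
The argument starts from the decomposition \eqref{total err decomp}, taken in mean square. The first two terms are deterministic, so $(x+y+z)^2\le 2(x+y)^2+2z^2$ together with Minkowski gives
\[
\sqrt{\mathrm{E}_\Delta|I(u)-Q^\Delta_{s,n}(u_h)|^2}\le \sqrt{2}\Big(|(I-I_s)(u)|+|I_s(u-u_h)|+\sqrt{\mathrm{E}_\Delta|(I_s-Q^\Delta_{s,n})(u_h)|^2}\Big),
\]
which accounts for the factor $\sqrt2$ in $C_{\mathrm{total}}$. The three terms are then bounded separately, using Theorem~\ref{thm:general trunc theorem_revised}, Corollary~\ref{corollary:FEM error} and Theorem~\ref{QMC theorem} respectively, with $F(\boldsymbol{y}):=G(u(\boldsymbol{y}))$ for the first term and $F(\boldsymbol{y}):=G(u_h(\boldsymbol{y}))$ for the third.

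\textbf{FEM term.} This is immediate. Assumptions \ref{assum:A1} and \ref{assum:A2} are exactly the hypotheses of Lemma~\ref{lemma:regularity} and of \eqref{cond1}, so Corollary~\ref{corollary:FEM error} gives $|I_s(u-u_h)|\le C_{\mathrm{FEM}}h^{r-1}$.

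\textbf{Pointwise derivative bound for $F$.} Both remaining terms require such a bound. Under \ref{assum:A4}, Lemma~\ref{lemma:G-regularity-a(by)}, applied pointwise (with $\rho=1$ and $\boldsymbol{\alpha}(\boldsymbol{y})\boldsymbol{\mathsf{b}}$ in place of $\boldsymbol{\alpha}$), yields
\[
\|\partial^{\boldsymbol\nu}a(\boldsymbol{y})\|_{\infty,a(\boldsymbol{y})^{-1}}\le (4\boldsymbol\alpha(\boldsymbol{y})\boldsymbol{\mathsf{b}})^{\boldsymbol\nu}(|\boldsymbol\nu|!)^\delta .
\]
Under \ref{assum:A4'} this bound is assumed directly. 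Theorem~\ref{thm:G-regularity-u(by)}, in its pointwise form and with $X=V$ or $X=V_h$ (so the bound holds uniformly in $h$), then gives
\[
\|\partial^{\boldsymbol\nu}u(\boldsymbol{y})\|_V\le (48\boldsymbol\alpha(\boldsymbol{y}))^{\boldsymbol\nu}\boldsymbol{\mathsf{b}}^{\boldsymbol\nu}(|\boldsymbol\nu|!)^\delta\|f\|_{V'}\|a(\boldsymbol{y})^{-1}\|_\infty .
\]
Using \ref{assum:A3} (or \ref{assum:A3'}), $\|a(\boldsymbol{y})^{-1}\|_\infty\le\prod_j\varsigma_j(y_j)$. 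Hence
\[
|\partial^{\boldsymbol\nu}F(\boldsymbol{y})|\le C_F(\boldsymbol{y})\,\boldsymbol\beta(\boldsymbol{y})^{\boldsymbol\nu}\boldsymbol{\mathsf{b}}^{\boldsymbol\nu}(|\boldsymbol\nu|!)^\delta,
\]
with $C_F(\boldsymbol{y})=\|G\|_{V'}\|f\|_{V'}\prod_j\varsigma_j(y_j)$ in product form and $\beta_j=48\alpha_j$. The numerical constant $48$ is absorbed into the $\alpha_j$ via Lemma~\ref{lemma:equiv defs}, i.e. we redefine the $\alpha_j$ used in \ref{assum:A6} and \ref{assum:A7}.

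\textbf{Truncation term.} The Gaussian measures are centred, so \eqref{mean free} holds. Condition (ii) of Theorem~\ref{thm:general trunc theorem_revised} is \ref{assum:A5}, and conditions (iii) and (iv) are \ref{assum:A6} together with $\boldsymbol{\mathsf{b}}\in\ell^p$ from \ref{assum:A4}. The theorem therefore gives $|(I-I_s)(u)|\le C_\delta s^{-\tau}$.

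\textbf{QMC term.} I apply Theorem~\ref{QMC theorem} to $G(u_h)$. Condition (ii) follows from \ref{assum:A5}, since $\varsigma_j\le\max_t\varsigma_j(t\cdot)$, and condition (iii) is \ref{assum:A7}. This gives $C_{\gamma,\vartheta}^{1/2}n^{-1/(2\vartheta)}$ with $\vartheta$ from \eqref{vartheta}, provided $p\le1/\delta$.

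\textbf{Main obstacle.} The delicate step is matching hypotheses precisely. The QMC theorem requires $C_j\ge1$, which holds since $\varsigma_j\ge1$. More seriously, the derivative bound is only assumed on $U_{\boldsymbol g,\boldsymbol{\mathsf b}}$, and the truncation theorem evaluates derivatives at $t\boldsymbol{y}+(1-t)\boldsymbol{y}_s$. I would argue that $U_{\boldsymbol g,\boldsymbol{\mathsf b}}$ has full $\boldsymbol\mu$-measure by the integrability in \ref{assum:A3}, and that the segment points remain in it because the $\max_{t}$ appearing in \ref{assum:A5} and \ref{assum:A6} is controlled. The bound is then used only on this full-measure set. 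Finally, the three bounds are collected and each constant is bounded by their maximum.
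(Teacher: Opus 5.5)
Your proposal is correct and follows essentially the same route as the paper. Both use the $\sqrt{2}$-decomposition \eqref{total err decomp}, derive derivative bounds for $u$ ($X=V$) and $u_h$ ($X=V_h$) from Lemma~\ref{lemma:G-regularity-a(by)} and Theorem~\ref{thm:G-regularity-u(by)} (packaged in the paper as Corollary~\ref{cor:G-regularity-lognormal}), and then apply Theorem~\ref{thm:general trunc theorem_revised}, Corollary~\ref{corollary:FEM error} and Theorem~\ref{QMC theorem} to the three terms.

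Your bookkeeping is in places more careful than the paper's:
\begin{enumerate}
\item You keep $C_F(\boldsymbol{y})=\|G\|_{V'}\|f\|_{V'}\prod_j\varsigma_j(y_j)$ as a $\boldsymbol{y}$-dependent product, whereas the paper writes it as the constant $C_f$.
\item You note the requirement $p\le 1/\delta$, which is implicit in \eqref{vartheta}.
\item Your full-measure argument for $U_{\boldsymbol{g},\boldsymbol{\mathsf{b}}}$ does work, since \ref{assum:A5} together with independence gives $\sum_j \mathsf{b}_j\max_{t\in[0,1]}|g_j(t y_j)|<\infty$ almost surely, which covers $\boldsymbol{y}_s$ and every $t\boldsymbol{y}+(1-t)\boldsymbol{y}_s$.
\end{enumerate}

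The rescaling $\alpha_j\to 48\alpha_j$ slightly tightens the smallness condition on $s$ in \ref{assum:A6}. The paper makes this same rescaling silently, so it is not a gap relative to the paper.
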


\begin{proof}
	To begin, we recall the error decomposition from \eqref{total err decomp}. By applying the triangle inequality, the total error $\mathcal{E}:=\sqrt{ \mathrm{E}_\Delta \left( {\left|I(u) - Q^\Delta_{s,n}({{}u_h})\right|}^2\right) } $ can be {estimated} as follows:
	\begin{align}\label{eq:total_error_bound}
		\mathcal{E}
		\leq 
		\sqrt{2} \bigg({ \underbrace{{\left|(I - I_s)(u)\right|}}_{\text{dimension}\atop\text{truncation error}} 
		+ 
		\underbrace{{\left|I_s(u - u_h)\right|}}_{\text{FEM error}}
		+
		\underbrace{ \sqrt{ \mathrm{E}_\Delta\left( {\left| I_s({{}u_h}) - Q^\Delta_{s,n}({{}u_h})\right|}^2 \right)} }_{\text{QMC error}}}\bigg).
	\end{align}
	{We give a bound for each} of these three components individually.
	
Assumptions \ref{assum:A3'} and \ref{assum:A4'} follow directly from Assumptions \ref{assum:A3} and \ref{assum:A4} combined with the regularity results of Lemma \ref{lemma:G-regularity-a(by)}. {{}Invoking Corollary \ref{cor:G-regularity-lognormal}, which applies to both the exact solution $u$ (with $X=V$) and the discrete solution $u_h$ (with $X=V_h$), yields the following uniform bounds on their parametric derivatives:}
	\begin{align}\label{u derivative}
					\left\|{\partial^{\boldsymbol{\nu}} u({\boldsymbol{y}})}\right\|_{V}
				&
				\le \
		\left\|{f}\right\|_{V'} \exp\left(\left\|{Z({\boldsymbol{y}})}\right\|_{\infty}\right) {12}^{{\left|{\boldsymbol{\nu}}\right|}}\, {\boldsymbol{\beta}}({\boldsymbol{y}})^{\boldsymbol{\nu}} \boldsymbol{\mathsf{b}}^{\boldsymbol{\nu}} (|{\boldsymbol{\nu}}|!)^\delta \\
		&{{}=}
		C_f
		\widecheck {\boldsymbol{\beta}}(\boldsymbol{y})^{\boldsymbol{\nu}}
		\boldsymbol{\mathsf{b}}^{\boldsymbol{\nu}} (|{\boldsymbol{\nu}}|!)^\delta,\notag
			\end{align}
		{{}and
\begin{align}\label{u_h derivative bound}
				\left\|{\partial^{\boldsymbol{\nu}} u_h({\boldsymbol{y}})}\right\|_{V}
			\leq
				C_f
				\widecheck {\boldsymbol{\beta}}(\boldsymbol{y})^{\boldsymbol{\nu}}
				\boldsymbol{\mathsf{b}}^{\boldsymbol{\nu}} (|{\boldsymbol{\nu}}|!)^\delta,
\end{align}}
		 where $C_f$ is the constant from Assumption \ref{assum:A5} and we have defined the scaled sequence 
		 $\widecheck {\boldsymbol{\beta}}(\boldsymbol{y}):=\left(\widecheck \beta_j(y_j)\right)_{j\in{\mathbb{N}}}
		 = \left(12 \beta_j(y_j)\right)_{j\in {\mathbb{N}}}
		$.
	
The estimate in \eqref{u derivative}, together with Assumption~\ref{assum:A5} and \ref{assum:A6}, ensures that the requirements of {Theorem~\ref{thm:general trunc theorem_revised} are satisfied. Consequently, the dimension} truncation error is bounded by:
	\begin{align*}
		| (I - I_s)(u) | \leq C_{\delta}\, {s^{-\tau}}.
	\end{align*}
	
	Similarly, utilizing Assumptions \ref{assum:A5} and \ref{assum:A7} along with the derivative bound \eqref{u_h derivative bound}, we satisfy the conditions for 
	{Theorem~\ref{QMC theorem}.} This yields the following estimate for the QMC error:
	\begin{align*}
		\sqrt{ \mathrm{E}_\Delta | I_s({{}u_h}) - Q^\Delta_{s,n}({{}u_h}) |^2 } \leq C_{\gamma, \vartheta}^{1/2} n^{-\frac{1}{2\vartheta}}.
	\end{align*}
	
Finally, under Assumptions (A1) and (A2), we apply Corollary~\ref{corollary:FEM error} to bound the FEM error arising from the spatial discretization:
	\begin{align*}
		| I_s(u - u_h) | \leq	C_{\mathrm{FEM}}  h^{r-1}.
	\end{align*}
	
	Combining the bounds into {the right-hand of} \eqref{eq:total_error_bound}, we arrive at the desired result. 
	\hfill
\end{proof} 
\section{Numerical experiment}\label{sec:Numerical experiment}

Let $D= (0,1)^2$. We consider the parametric elliptic PDE \eqref{ellip} with a constant source term $f\equiv 1$ and a log-normal random diffusion coefficient $a({\boldsymbol{x}},\boldsymbol{y})=\exp(Z({\boldsymbol{x}},\boldsymbol{y}))$, where the random field $Z$ is chosen as one of the following two functions:
\begin{equation}\label{QMC-a1-new}
	Z^{(1)} ({\boldsymbol{x}},\boldsymbol{y}) = 
	-5+\sum_{j=1}^{s}
	j^{-3}
	\sin(j x_1)\sin(j x_2)\cosh(j^{-3} y_j),
\end{equation}
\begin{equation}\label{QMC-a2-new}
	Z^{(2)} ({\boldsymbol{x}},\boldsymbol{y}) = 	
	-5+\sum_{j=1}^{s}
	j^{-3}
	\sin(j x_1)\sin(j x_2)[\exp\left(-\sin(j^{-3} y_j)^{-2}\right)+ \cos(j^{-3} y_j)],
\end{equation}
and  $y_j$ are i.i.d. standard Gaussian random variables for all $j \in {\mathbb N}$.

We set the truncation dimension to $s=100$. It is easy to verify that the corresponding sequence $\boldsymbol{\mathsf{b}}^{(i)}$ belongs to $\ell^p$ for any $p>\frac{1}{3}$. Furthermore, $a^{(1)}$ is analytic with respect to $\boldsymbol{y}$ ($\delta^{(1)} = 1$), whereas $a^{(2)}$ belongs to the Gevrey class with index $\delta^{(2)} = \frac{3}{2}$. By virtue of Theorem \ref{thm:G-regularity-u(by)}, this parametric regularity carries over to the corresponding solutions $u^{(1)}$ and $u^{(2)}$ with the same Gevrey index $\delta$. 

To sufficiently isolate the quadrature error from the spatial discretization error, the solutions are computed on a highly refined finite element mesh with a mesh size of $h = \sqrt{2} \cdot 2^{-8}$. Specifically, for the quantity of interest $G(w)=\int_D w\, {\rm d} {\boldsymbol{x}}$, we compute the relative QMC error as follows:
\begin{equation}
	\mathrm{err}_n^{{\rm QMC},(k)}=\sqrt{
		\frac{1}{R} \sum_{j=1}^R
		\left({\left|\frac{ I_s^*(G(u^{(k)}))-Q^{(j)}_{s,n}(G(u^{(k)}))}{I_s^*(G(u^{(k)}))}\right|}^2\right)},
\end{equation}
where $Q^{(j)}_{s,n}$ denotes the randomly shifted QMC quadrature rule. The expected error is approximated by the empirical root-mean-square error over $R = 16$ independent random shifts $\Delta^{(j)} \sim \mathcal{U}(0,1)^s$. Depending on the decay rate of the sequence $\boldsymbol{\mathsf{b}}$ and Gevrey index $\delta^{(k)}$, the generating vector $\boldsymbol{z}$ in \eqref{QMC quad def} is computed via the QMC4PDE software detailed in the survey \cite{KuNu16}. A completely analogous error metric, denoted by $\mathrm{err}_n^{{\rm MC},(k)}$, is used for the plain Monte Carlo approximation. In all cases, the ``exact'' reference value $I_s^*(G(u^{(k)}))$ is approximated using the QMC estimation at the highest available number of sample points.

Given that $p \in (0,\frac{2}{3 \delta^{(2)}}) \subset (0,\frac{2}{3 \delta^{(1)}})$, our theoretical analysis predicts that the convergence rate of $\mathrm{err}_n^{{\rm QMC},(k)}$ is approximately $\mathcal{O}(n^{-1})$, while $\mathrm{err}_n^{{\rm MC},(k)}$ decays at the standard rate of $\mathcal{O}(n^{-\frac{1}{2}})$. As illustrated in Figure \ref{fig:QMC}, this expected convergence behavior is clearly reproduced in our numerical experiments.

\begin{figure}[htbp]
	\centering
	\begin{tikzpicture}
		\begin{loglogaxis}[
			width=10cm, height=10cm, 
			xlabel={Number of nodes $n$},
			ylabel={QMC error $\mathrm{err}^{\mathrm{QMC}}_n$},
			grid=both, 
			major grid style={solid, gray!40},
			minor grid style={dotted, gray!20},
			legend pos=south west,
			font=\large,
			legend cell align={left},
			legend style={font=\tiny},
			]
			
			\addplot[
			color=blue, only marks, mark=o, mark size=3pt, line width=1.5pt
			] coordinates {
				(32, 0.4211857513177363343537251694215228781104087829589843750000000000)
				(64, 0.1806758463490375221649486547903507016599178314208984375000000000)
				(128, 0.0752897070291951159282106686987390276044607162475585937500000000)
				(256, 0.0306314820660975664634584347822965355589985847473144531250000000)
				(512, 0.0123233322198484599591106203320123313460499048233032226562500000)
				(1024, 0.0050260733057262384215757577976546599529683589935302734375000000)
				(2048, 0.0021464141181552517760600107266100167180411517620086669921875000)
				(4096, 0.0009779942251760681548589104750135447829961776733398437500000000)
				(8192, 0.0004708116054674602556084916926693040295504033565521240234375000)
				(16384, 0.0002335808779871873712506019638013299299927894026041030883789062)
				(32768, 0.0001173175257043817208951058672639078395150136202573776245117188)
				(65536, 0.0000590523828955164166052901886239112627663416787981986999511719)
				(131072, 0.0000296542994538804149538737536806465300287527497857809066772461)
				(262144, 0.0000148256194330320988035475451849443118135241093114018440246582)
				(524288, 0.0000073685421168375103286138534663951560332861845381557941436768)
				(1048576, 0.0000036656464007213894485944036955027414137475716415792703628540)
			};
			\addlegendentry{QMC Analytic}
			
			\addplot[
			color=blue, no marks, line width=1.5pt
			] coordinates {
				(32, 0.2998950708125042718954489373572869226336479187011718750000000000)
				(64, 0.1383556069329923421395989180382457561790943145751953125000000000)
				(128, 0.0638299052996589844077846009895438328385353088378906250000000000)
				(256, 0.0294477173775592460103212033573072403669357299804687500000000000)
				(512, 0.0135856077911686579096084415141376666724681854248046875000000000)
				(1024, 0.0062676755786889590607580124981268454575911164283752441406250000)
				(2048, 0.0028915715633444408726171470647159367217682301998138427734375000)
				(4096, 0.0013340170532073335630390831241243176918942481279373168945312500)
				(8192, 0.0006154443904510045748601498871721560135483741760253906250000000)
				(16384, 0.0002839332501986687427864008981259758002124726772308349609375000)
				(32768, 0.0001309916733651629518371539706578232653555460274219512939453125)
				(65536, 0.0000604325787099590431943052504237812172505073249340057373046875)
				(131072, 0.0000278803719023766965697842812321738392711267806589603424072266)
				(262144, 0.0000128625180988137711204618088278728293971653329208493232727051)
				(524288, 0.0000059340805216521633097298722703261120159368147142231464385986)
				(1048576, 0.0000027376685783399687496258816110650258224268327467143535614014)
			};
			\addlegendentry{QMC Analytic slope -1.1}
			
			\addplot[
			color=blue, only marks, mark=asterisk, mark size=3.5pt, line width=1.5pt
			] coordinates {
				(32, 0.1021796661737822825566368578620313201099634170532226562500000000)
				(64, 0.0599215316705703007760241973755910294130444526672363281250000000)
				(128, 0.0770566127037554116085971145366784185171127319335937500000000000)
				(256, 0.0454779405232919736734586990678508300334215164184570312500000000)
				(512, 0.0334438083905827729380710877649107715114951133728027343750000000)
				(1024, 0.0196601519656832458837758537129047908820211887359619140625000000)
				(2048, 0.0113297177572978229748157374956463172566145658493041992187500000)
				(4096, 0.0089468362741527210357794785977603169158101081848144531250000000)
				(8192, 0.0055191721766108953195484332354681100696325302124023437500000000)
				(16384, 0.0046121513374611840946348806369314843323081731796264648437500000)
				(32768, 0.0036665576565470727604667544596850348170846700668334960937500000)
				(65536, 0.0023682173808798061449831795499676445615477859973907470703125000)
				(131072, 0.0021807021175297039254636910499129953677766025066375732421875000)
				(262144, 0.0016495929504288214372670351437477620493154972791671752929687500)
				(524288, 0.0010634423274936671138801713709653995465487241744995117187500000)
				(1048576, 0.0007386735231089322766678817444585547491442412137985229492187500)
			};
			\addlegendentry{MC Analytic}
			
			\addplot[
			color=blue, no marks, dashed, line width=1.5pt
			] coordinates {
				(32, 0.1040860426102085956046039427747018635272979736328125000000000000)
				(64, 0.0746445408581340591469555079129349905997514724731445312500000000)
				(128, 0.0535307841492973709995872866329591488465666770935058593750000000)
				(256, 0.0383892086239071075870477045555162476375699043273925781250000000)
				(512, 0.0275305389635172662043949287635768996551632881164550781250000000)
				(1024, 0.0197433237826563773298360615626734215766191482543945312500000000)
				(2048, 0.0141587796193658067150922263977008697111159563064575195312500000)
				(4096, 0.0101538647958493051526307482390620862133800983428955078125000000)
				(8192, 0.0072817695496418567702168189725853153504431247711181640625000000)
				(16384, 0.0052220675417863184411548793661950185196474194526672363281250000)
				(32768, 0.0037449673771012756406939558928570477291941642761230468750000000)
				(65536, 0.0026856758445440052571195899133726925356313586235046386718750000)
				(131072, 0.0019260127033603525494032115972231622436083853244781494140625000)
				(262144, 0.0013812258620270255297934891558497838559560477733612060546875000)
				(524288, 0.0009905359806836940440161187737544423725921660661697387695312500)
				(1048576, 0.0007103556022250397724432380286430088744964450597763061523437500)
			};
			\addlegendentry{MC Analytic slope -0.5}
			
			\addplot[
			color=red, only marks, mark=o, mark size=3pt, line width=1.5pt
			] coordinates {
				(32, 0.0091098093558062646218909108597472368273884057998657226562500000)
				(64, 0.0050089183360178214307611987976542877731844782829284667968750000)
				(128, 0.0015801584408951620860234177357028784172143787145614624023437500)
				(256, 0.0015847517884800222873198372042224946198984980583190917968750000)
				(512, 0.0014515246195265353677111841435021233337465673685073852539062500)
				(1024, 0.0009878565388064715086652300968239615031052380800247192382812500)
				(2048, 0.0005124791807532936638650000737982281862059608101844787597656250)
				(4096, 0.0002372327439998545440397181849334629077930003404617309570312500)
				(8192, 0.0001169645526226410889216594801709447892790194600820541381835938)
				(16384, 0.0000552855481283278656990179400487761540716746821999549865722656)
				(32768, 0.0000245249034850487766671114919780904983781510964035987854003906)
				(65536, 0.0000091833511518815016494259040324976695046643726527690887451172)
				(131072, 0.0000025924744220572767116359067657294090736286307219415903091431)
				(262144, 0.0000007557943267933220480548034241052857140630294452421367168427)
				(524288, 0.0000003377483961294864246989824198286189727014061645604670047760)
				(1048576, 0.0000001773380654413738012768660947879317824060763086890801787376)
			};
			\addlegendentry{QMC Gevrey}
			
			\addplot[
			color=red, no marks, line width=1.5pt
			] coordinates {
				(32, 0.0181442359300516206643827388234058162197470664978027343750000000)
				(64, 0.0088174050834240753210524488281407684553414583206176757812500000)
				(128, 0.0042849218178663420544816098356477596098557114601135253906250000)
				(256, 0.0020823082087657717592632522496387537103146314620971679687500000)
				(512, 0.0010119221914887622983808546095474412140902131795883178710937500)
				(1024, 0.0004917555034921353238758223724857998604420572519302368164062500)
				(2048, 0.0002389743769321114516806375815960450381680857390165328979492188)
				(4096, 0.0001161324121937442032650653067626933534484123811125755310058594)
				(8192, 0.0000564359130676551050279435295387031601421767845749855041503906)
				(16384, 0.0000274256964409415673888936737512267427518963813781738281250000)
				(32768, 0.0000133278401001323140527944979827346116962871747091412544250488)
				(65536, 0.0000064768208208387888886407163235858774896769318729639053344727)
				(131072, 0.0000031474873370392896800538939938718385747051797807216644287109)
				(262144, 0.0000015295585304673754765378583184154415164357487810775637626648)
				(524288, 0.0000007433069771541098586450382022794425296297049499116837978363)
				(1048576, 0.0000003612187773665363143169265529308642470596169005148112773895)
			};
			\addlegendentry{QMC Gevrey slope -1.0}
			
			\addplot[
			color=red, only marks, mark=asterisk, mark size=3.5pt, line width=1.5pt
			] coordinates {
				(32, 0.0202747834813828582689598789556839619763195514678955078125000000)
				(64, 0.0121315327976217000177650007231022755149751901626586914062500000)
				(128, 0.0090788441411880881071194693276993348263204097747802734375000000)
				(256, 0.0072011813803641577802494566640234552323818206787109375000000000)
				(512, 0.0057351754051910451212559216571662545902654528617858886718750000)
				(1024, 0.0039317390761976055840731802959453489165753126144409179687500000)
				(2048, 0.0023381609523805534783802428933086048346012830734252929687500000)
				(4096, 0.0024430269296752309474141107159539387794211506843566894531250000)
				(8192, 0.0015521997293868549337714224378714789054356515407562255859375000)
				(16384, 0.0009962187141826308895559138889552741602528840303421020507812500)
				(32768, 0.0007363177255756451437124221470753582252655178308486938476562500)
				(65536, 0.0004979094049602688157318830519670882495120167732238769531250000)
				(131072, 0.0003704839662718546950716247412316306508728303015232086181640625)
				(262144, 0.0002519262555529000490762980835057760486961342394351959228515625)
				(524288, 0.0001747809035498701271237748100517706006939988583326339721679688)
				(1048576, 0.0001053675733997393068304762597797719081427203491330146789550781)
			};
			\addlegendentry{MC Gevrey}
			
			\addplot[
			color=red, no marks, dashed, line width=1.5pt
			] coordinates {
				(32, 0.0200348032796360819851599899266147986054420471191406250000000000)
				(64, 0.0143138157011116051026133888512958947103470563888549804687500000)
				(128, 0.0102264702610601696669556659458066860679537057876586914062500000)
				(256, 0.0073062764104351533150927267001861764583736658096313476562500000)
				(512, 0.0052199511290758112552756742275050783064216375350952148437500000)
				(1024, 0.0037293811867045094203543253996713247033767402172088623046875000)
				(2048, 0.0026644471742799605978868626721123291645199060440063476562500000)
				(4096, 0.0019036077003439257331746103929503988183569163084030151367187500)
				(8192, 0.0013600278180737285314144680015147059748414903879165649414062500)
				(16384, 0.0009716685142638397049488774293024562211940065026283264160156250)
				(32768, 0.0006942061692156615030843691549478080560220405459403991699218750)
				(65536, 0.0004959738823504020828122906827672977669863030314445495605468750)
				(131072, 0.0003543473147921729336264073939588570283376611769199371337890625)
				(262144, 0.0002531625635313486735733512489332497352734208106994628906250000)
				(524288, 0.0001808713680005000375410756374705556481785606592893600463867188)
				(1048576, 0.0001292231019706882469508940625146919956023339182138442993164062)
			};
			\addlegendentry{MC Gevrey slope -0.5}
			
		\end{loglogaxis}
	\end{tikzpicture}
	\caption{QMC and MC errors vs Number of nodes $n$}\label{fig:QMC}
\end{figure}

As detailed in Figure \ref{fig:QMC}, the numerical results are derived from an error estimation utilizing 16 QMC runs, comprising one unshifted run and 15 random shifts. However, since the unshifted quadrature point set naturally includes the origin $\boldsymbol{0}_s \in [0,1)^s$, mapping this specific point via the inverse cumulative distribution function results in $\Phi^{-1}(\boldsymbol{0}_s)=(-\infty,\dots,-\infty)$, which inevitably induces a singularity during the evaluation of the quantity of interest. To circumvent this computational issue, we explicitly omit this singular point from the quadrature sum.

To analyze the dimension truncation error, we fix the  mesh size at $h=\sqrt{2}\cdot 2^{-8}$ and employ the PDE solution with a high truncation dimension $s^*=256$ as our reference solution. We then consider the error quantities
\begin{align*}
	&\bigg|\int_{{{\mathbb{R}}}^{s^*}}G(u_{s^*,h}({\boldsymbol{y}}))\,{\boldsymbol{\mu}}_{\beta}({\rm d}{\boldsymbol{y}})-\int_{\mathbb R^{s}}G(u_{s,h}({\boldsymbol{y}}))\,{\boldsymbol{\mu}}_{\beta}({\rm d}{\boldsymbol{y}})\bigg|
\end{align*}
for varying dimensions $s\in\{2,4,8,16,32,64\}$, where the high-dimensional integrals within this norm are approximated using lattice rules with $n=65536$ nodes subjected to a single random shift. Focusing on the first test case where $a({\boldsymbol{x}},\boldsymbol{y})=\exp(Z^{(1)}({\boldsymbol{x}},\boldsymbol{y}))$, the corresponding results are depicted in Figure~\ref{fig:trunc}. For $p>\tfrac{1}{3}$, the theoretically expected dimension truncation rate of $\mathcal O(s^{-5.045})$ closely matches the observed empirical rate of $\mathcal O(s^{-5.001})$. These numerical findings are in strong agreement with Theorem \ref{thm:general trunc theorem_revised}, with minor discrepancies being largely attributable to the residual numerical integration and finite element errors inherent in the computations.

\begin{figure}[htbp]
	\centering
	\begin{tikzpicture}
		\begin{loglogaxis}[
			width=10cm, height=10cm, 
			xlabel={Dimension $s$},
			ylabel={Truncation error},
			grid=both, 
			major grid style={solid, gray!40}, 
			minor grid style={dotted, gray!20}, 
			legend pos=north east, 
			font=\large,
			legend cell align={left},
			]
			
			\addplot[
			color=blue,
			only marks, 
			mark=o,
			mark size=3pt,
			line width=1.5pt
			] coordinates {
				(2, 0.0372068575471224960438121343031525611877441406250)
				(4, 0.0032567741220836943227823212509974837303161621094)
				(8, 0.0004333307981818634857518190983682870864868164062)
				(16, 0.0000087622359026262586212396854534745216369628906)
				(32, 0.0000004844381606972092413343489170074462890625000)
				(64, 0.0000000109103064360738244431558996438980102539062)
				(128, 0.0000000002041851132617011899128556251525878906250)
			};
			\addlegendentry{Truncation Error Analytic}

			\addplot[
			color=blue,
			no marks,
			line width=1.5pt
			] coordinates {
				(2, 0.3932248858479072683991262238123454153537750244141)
				(4, 0.0122802880291336090345932774425818934105336666107)
				(8, 0.0003835094865710288875026989785510522779077291489)
				(16, 0.0000119768792019408840804611110497646109251945745)
				(32, 0.0000003740341254669768899627460990303662313749555)
				(64, 0.0000000116809666904860579235062413919199464373833)
				(128, 0.0000000003647928719174873815766019001592245646348)
			};
			\addlegendentry{slope -5.001} 
			\addplot[
			color=red,
			only marks,
			mark=o,
			mark size=3pt,
			line width=1.5pt
			] coordinates {
				(2, 0.0418921434205810072626263718120753765106201171875)
				(4, 0.0039425091667784073479197104461491107940673828125)
				(8, 0.0005339618564264014821674209088087081909179687500)
				(16, 0.0000100675765493107860493182670325040817260742188)
				(32, 0.0000005479501181682167043618392199277877807617188)
				(64, 0.0000000122080638931265639257617294788360595703125)
				(128, 0.0000000002269597842996518011204898357391357421875)
			};
			\addlegendentry{Truncation Error Gevrey}

			\addplot[
			color=red,
			no marks,
			line width=1.5pt
			] coordinates {
				(2, 0.5062053508059254358641965154674835503101348876953)
				(4, 0.0153337828170451023757081543408276047557592391968)
				(8, 0.0004644852036944439031308784482376950109028257430)
				(16, 0.0000140700117528235686860252698360262968435563380)
				(32, 0.0000004262035241381401845154166044271626390127494)
				(64, 0.0000000129103974594276268529122102886493028695014)
				(128, 0.0000000003910769224572902859008626107482691977579)
			};
			\addlegendentry{slope -5.001} 
			
		\end{loglogaxis}
	\end{tikzpicture}
	\caption{Dimension truncation error}\label{fig:trunc}
\end{figure}

Finally, to assess the finite element discretization error, we fix the truncation dimension at $s=256$ and utilize the finite element solution on a highly refined mesh with $h^*=2^{-8}$ as the reference. We approximate the quantities
\begin{align*}
	\int_{\mathbb R^s} \sqrt{\left\|{u_{s,h^*}({\boldsymbol{y}})}\right\|_{H^1(D)}^2-\left\|{u_{s,h}({\boldsymbol{y}})}\right\|_{H^1(D)}^2} \,{\boldsymbol{\mu}}_{\beta}({\rm d}{\boldsymbol{y}})
\end{align*}
across a hierarchy of regular finite element meshes with sizes $h=\sqrt{2}\cdot 2^{-k}$ for $k\in\{3,4,5,6,7\}$. Similar to the truncation analysis, the high-dimensional integrals were computed via a QMC cubature rule using $n=65536$ nodes subject to a single random shift. The results for the $Z^{(1)}$ case, displayed in Figure~\ref{fig:fem}, demonstrate that the observed finite element errors align perfectly with the theoretical error rate of $\mathcal O(h^{-1})$.

\begin{figure}[htbp]
	\centering
	\begin{tikzpicture}
		\begin{loglogaxis}[
			width=10cm, height=10cm, 
			xlabel={Mesh width},
			ylabel={FEM error},
			grid=both, 
			major grid style={solid, gray!40}, 
			minor grid style={dotted, gray!20}, 
			legend pos=north west, 
			font=\large,
			legend cell align={left},
			]
			
			\addplot[
			color=blue,
			only marks, 
			mark=o,
			mark size=3pt,
			line width=1.5pt
			] coordinates {
				(0.17677669529663689318432773234235355630517005920410156250000, 1.3350715638669754525835742242634296417236328125000)
				(0.08838834764831844659216386617117677815258502960205078125000, 0.3495048115535066557413301779888570308685302734375)
				(0.04419417382415922329608193308558838907629251480102539062500, 0.0875517825342946309774561086669564247131347656250)
				(0.02209708691207961164804096654279419453814625740051269531250, 0.0209260304174527789200510596856474876403808593750)
				(0.01104854345603980582402048327139709726907312870025634765625, 0.0041896231089229019062258885242044925689697265625)
			};
			\addlegendentry{FEM Error Analytic}

			\addplot[
			color=blue,
			no marks,
			line width=1.5pt
			] coordinates {
				(0.17677669529663689318432773234235355630517005920410156250000, 1.4345047754034581100057721414486877620220184326172)
				(0.08838834764831844659216386617117677815258502960205078125000, 0.3417900768671371936058278606651583686470985412598)
				(0.04419417382415922329608193308558838907629251480102539062500, 0.0814360876644607406538156624264956917613744735718)
				(0.02209708691207961164804096654279419453814625740051269531250, 0.0194032443389855896453966010994918178766965866089)
				(0.01104854345603980582402048327139709726907312870025634765625, 0.0046230842084359824104011771339628467103466391563)
			};
			\addlegendentry{slope 1.035} 
			\addplot[
			color=red,
			only marks,
			mark=o,
			mark size=3pt,
			line width=1.5pt
			] coordinates {
				(0.17677669529663689318432773234235355630517005920410156250000, 1.692641427675535936714368290267884731292724609375)
				(0.08838834764831844659216386617117677815258502960205078125000, 0.442356855449023811388542526401579380035400390625)
				(0.04419417382415922329608193308558838907629251480102539062500, 0.110755093306917018480817205272614955902099609375)
				(0.02209708691207961164804096654279419453814625740051269531250, 0.026468062129200831122943782247602939605712890625)
				(0.01104854345603980582402048327139709726907312870025634765625, 0.005298982139976260441471822559833526611328125000)
			};
			\addlegendentry{FEM Error Gevrey}

			\addplot[
			color=red,
			no marks,
			line width=1.5pt
			] coordinates {
				(0.17677669529663689318432773234235355630517005920410156250000, 1.8175331254411595249820265962625853717327117919922)
				(0.08838834764831844659216386617117677815258502960205078125000, 0.4328154658628736717318474802596028894186019897461)
				(0.04419417382415922329608193308558838907629251480102539062500, 0.1030678477701070933081339831005607265979051589966)
				(0.02209708691207961164804096654279419453814625740051269531250, 0.0245439040002502495418923444958636537194252014160)
				(0.01104854345603980582402048327139709726907312870025634765625, 0.0058447249710420123500198563704088883241638541222)
			};
			\addlegendentry{slope 1.035} 
			
		\end{loglogaxis}
	\end{tikzpicture}
	\caption{FEM error vs Mesh width}\label{fig:fem}
\end{figure}

\providecommand{\bysame}{\leavevmode\hbox to3em{\hrulefill}\thinspace}
\providecommand{\MR}{\relax\ifhmode\unskip\space\fi MR }
\providecommand{\MRhref}[2]{
  \href{http://www.ams.org/mathscinet-getitem?mr=#1}{#2}
}
\providecommand{\href}[2]{#2}

\end{document}